\documentclass[numbers,webpdf]{ima-authoring-template}%
\graphicspath{{Fig/}}

\theoremstyle{thmstyletwo}%
\newtheorem{theorem}{Theorem}[section]
\newtheorem{lemma}[theorem]{Lemma}
\newtheorem{corollary}[theorem]{Corollary}
\newtheorem{example}{Example}[section]%
\newtheorem{remark}[theorem]{Remark}

\numberwithin{equation}{section}

\definecolor{pass}{rgb}{0,0,0.7}
\definecolor{og}{rgb}{0.95,0.5,0}
\definecolor{grp}{rgb}{0.58,0.35,0.75}

\newcommand {\dd} {{\mathrm{d}}}
\newcommand{\pt}{\partial}
\newcommand{\eps}{\varepsilon}
\newcommand {\A} {{\mathcal A}}
\newcommand {\B} {{\mathcal B}}
\newcommand {\D} {{\mathcal D}}
\newcommand {\U} {{\mathcal U}}
\newcommand {\beq} {\begin{equation}}
\newcommand {\eeq} {\end{equation}}

\begin{document}

\DOI{DOI HERE}
\copyrightyear{2021}
\vol{00}
\pubyear{2021}
\access{Advance Access Publication Date: Day Month Year}
\appnotes{Paper}
\copyrightstatement{Published by Oxford University Press on behalf of the Institute of Mathematics and its Applications. All rights reserved.}
\firstpage{1}


\title[Backward L2 method for subdiffusion equations]{Error analysis of a backward L2 method on graded meshes for linear and semilinear subdiffusion equations}

\author{Natalia Kopteva*
\address{\orgdiv{Department of Mathematics and Statistics
}, \orgname{University of Limerick}, \orgaddress{
\postcode{V94 T9PX}, 
\country{Ireland}}}}
\author{Lu Wang
\address{\orgdiv{School of Science}, \orgname{Harbin Istitute of Technology, Shenzhen}, \orgaddress{
\postcode{518055}, 
\country{China}}}}

\authormark{Natalia Kopteva and Lu Wang}

\corresp[*]{Corresponding author: \href{email:Natalia.Kopteva@ul.ie}{Natalia.Kopteva@ul.ie}}

\received{Date}{0}{Year}
\revised{Date}{0}{Year}
\accepted{Date}{0}{Year}


\abstract{
Time-fractional linear and semilinear parabolic equations with a Caputo time derivative of order $\alpha\in(0,1)$ are
      considered. Their solutions exhibit an initial singularity of type $t^\sigma$
      for any fixed $\sigma \in (0,1) \cup (1,2)$.
        A backward L2-type discrete fractional-derivative operator of order $3-\alpha$ is
 considered on nonuniform temporal meshes.
 Sufficient conditions for inverse monotonicity of this operator areestablished.
Hence,  sharp pointwise-in-time error bounds are obtained on graded temporal meshes with arbitrary degree of grading.
The theoretical findings are illustrated by numerical experiments.}

\keywords{time-fractional; backward L2 method; subdiffusion; semilinear;  graded meshes.}


\maketitle

\section{Introduction}\label{se:introduction}

In this paper, we consider linear and semilinear time-fractional parabolic equations, also known as subdiffusion equations, involving the Caputo derivative of order $\alpha\in(0,1)$, defined by
    \begin{align}\label{eq:def-Caputoderivative}
        \pt_{t}^{\alpha} u(\cdot,t) := \frac{1}{\Gamma(1-\alpha)} \int_{0}^{t} (t-s)^{-\alpha}\, \partial_{s}u(\cdot,s)  \, \dd s\quad \hbox{for}\ 0<t\leq T.
    \end{align}
Our aim is to  investigate a backward-quadratic-interpolation-based scheme proposed in \citep{GaoSun2014} on graded temporal meshes. More precisely, we consider the discrete fractional derivative operator \(\delta_t^\alpha\) associated with \(\partial_t^\alpha\), defined by applying linear Lagrange interpolation on the first time interval and backward-quadratic Lagrange interpolation on all remaining intervals.
Thus, on the temporal mesh $0=t_{0}<t_{1}<\dots<t_{M}=T$ 
it is defined by
    \begin{align}\label{eq:defdelta}
        \delta_{t}^{\alpha}U^{m}:=\pt_{t}^{\alpha}(\Pi U)(t_{m}),\qquad
        \Pi:=
        \left\{
        	\begin{array}{ll}
        		\Pi_{1,1}\quad & \hbox{on}\  (0,t_{1}),\\
        		\Pi_{2,j}\quad & \hbox{on}\  (t_{j-1},t_{j}),\ j\geq 2.
        	\end{array}
        \right.
    \end{align}
Here, $\Pi_{1,1}$ and $\Pi_{2,j}$ are respectively the standard linear and {\it backward-quadratic} Lagrange interpolation operators with the following interpolation points:
$\Pi_{1,1}:\{ t_{0},t_{1}\}$ and $\Pi_{2,j}:\{ t_{j-2},t_{j-1},t_{j} \}$.

Various interpolation-based schemes have been proposed in the literature for subdiffusion equations, with the L1 method arguably attracting the greatest attention
(see, e.g., \citep{NataliaL1, MartinL1, Liao, NataliaSean}). Notably, most error analyses of L1-type schemes rely on the discrete maximum/comparison principle satisfied by the associated temporal discrete operator, which can be readily verified on any temporal grid, as this operator corresponds to an M-matrix.

Among higher-order schemes, the Alikhanov method (see, e.g., \citep{Alikhanov2015, ChenMartin2019, NataliaMeng,LiaoMcLeanZhang2021}), 
which is of order $3-\alpha$, also enjoys a similar M-matrix property for its temporal discrete operator and has consequently received considerable attention in the literature. The method may be viewed as an analogue of the Crank–Nicolson scheme for time-fractional equations; similarly, the spatial-derivative terms are discretized using two time levels. This limits the order of convergence in time to $2$ and introduces additional challenges in the error analysis for the semilinear case.
Furthermore, the Alikhanov scheme becomes inapplicable if the equation involves multiple time-fractional derivatives of different orders.
It is worth noting that the Alikhanov temporal discretization is based on forward-quadratic interpolation over all time intervals except the final one, where a special linear interpolation is employed.

There are two alternative approaches, leading to two L2-type schemes: forward-quadratic interpolation over all time intervals except the final one, combined with backward-quadratic interpolation on the final interval \citep{NataliaL2, Xu2016,NataliaSemi}; or the above \eqref{eq:defdelta}, backward-quadratic interpolation over all time intervals except the first one \citep{GaoSun2014}.
We shall refer to these schemes as the forward and backward L2 methods, respectively. Importantly, both schemes employ backward-quadratic interpolation on the final time interval, which causes the associated temporal discrete operator to lack the M-matrix property, even on uniform temporal grids (similarly to backward differencing in the context of classical parabolic equations). This feature makes their analysis considerably more challenging and may explain why they have received comparatively less attention in the literature.
The forward L2 method was proposed and analyzed on uniform temporal meshes in \citep{Xu2016}, while its stability and convergence on graded temporal meshes were established in  \citep{NataliaL2} for linear subdiffusion equations and in \citep{NataliaSemi} for the semilinear case, with sharp pointwise-in-time error bounds.

Much less analysis is available for the backward L2 method \eqref{eq:defdelta}, while the  latter is arguably more natural, as it employs the same backward-quadratic interpolation throughout the temporal mesh,  a feature that also makes it intrinsically better suited for the residual-based adaptive time stepping \citep{FranzKoptevaJCOAM}.
%
In \citep{GaoSun2014}, where this method was proposed, the analysis was limited to truncation error estimates on uniform meshes and numerical experiments for subdiffusion equations with artificially smooth solutions.
A somewhat similar backward-quadratic-interpolation-based scheme was introduced and analyzed on uniform temporal meshes in the context of the Riemann–Liouville fractional derivative in \citep{Yan2018}. To achieve the optimal convergence order $3-\alpha$ for nonsmooth solutions in positive time, i.e., away from the initial time, on uniform meshes, the scheme was appropriately modified at the first two time levels.
In \citep{Alikhanov2021}, a version of \eqref{eq:defdelta} with forward-quadratic interpolation on the first time interval was considered on uniform temporal meshes for artificially smooth solutions, namely, solutions that are three times continuously differentiable in time.
The stability and convergence of order $3-\alpha$ were established in the discrete in the discrete $L_2(0,T;\,W_2^1(\Omega))$ norm for the spatial domain $\Omega=(0,1)$.
These results were extended in \citep{Alikhanov2025} to multiterm time-fractional parabolic equations.

Thus, the backward L2 scheme on nonuniform temporal meshes has not yet been analyzed in the literature, while even on uniform meshes, the available error bounds are quite limited.
In this paper, we consider this scheme in the context of linear and semilinear subdiffusion equations, with the linear equation given by\vspace{-0.3cm}
    \begin{align}\label{eq:problem}
    \left.
    \begin{array}{ll}
        &\qquad \pt_{t}^{\alpha} u+\mathcal{L} u = f(x,t)\quad \hbox{for}\ (x,t)\in\Omega\times(0,T],\\
        &\qquad u(x,t) = 0 \quad \hbox{for}\ (x,t)\in\pt\Omega\times(0,T],\qquad u(x,0)=u_{0}(x)\quad \hbox{for}\ x\in\Omega.\vspace{-0.1cm}
    \end{array}
    \right.
    \end{align}
This problem is posed in a bounded Lipschitz domain $\Omega\subset \mathbb{R}^{d}$, where $d\in\{1,2,3\}$.
Here, the linear second-order elliptic spatial operator $\mathcal{L}$ is defined by\vspace{-0.2cm}
    \begin{align}\label{eq:def-ellipticspatial}
        \mathcal{L} u:= \sum_{k=1}^{d} \{ -\partial_{x_{k}}(a_{k}(x)\partial_{x_{k}}u)  \}+c(x)u,\vspace{-0.3cm}
    \end{align}
with sufficiently smooth coefficients $\{a_{k}\}$ and $c$, which are assumed to satisfy $a_{k}>0$ and $c\geq 0$ in~$\bar{\Omega}$ and  $a_{k}, c\in C(\bar{\Omega})$.

The core idea in the analysis of the backward L2 operator \eqref{eq:defdelta} will be a simplified version of the approach developed in \citep{NataliaL2}.
The discrete version
$\delta_{t}^{\alpha}$ of Caputo fractional-derivative operator $\pt_{t}^{\alpha}$ will be represented in the form
    \begin{align}\label{eq:newrepresentation}
        \delta_{t}^{\alpha}  U^{m} = \sum_{j=0}^{m}  \kappa_{m,j} V^{j} \; \forall m\geq 1,\qquad V^{j}:= \frac{U^{j}-\beta U^{j-1}}{1-\beta}\; \forall j\geq 1,\quad V^{0}:=U^{0},
    \end{align}
where a constant $\beta\in[0,1)$.
(Alternatively, one may consider a more general, albeit more intricate, version of the above representation, involving a variable $\beta_j\in[0,1)$ and $V^{j}:= \frac{U^{j}-\beta_{j}U^{j-1}}{1-\beta_{j}}$, as used in  \citep{NataliaL2}.)
Hence,  assuming $U^0=V^0=0$,
we shall formulate relatively simple sufficient conditions for choosing $\beta\in[0,1)$ such that
  \begin{align}\label{eq:condition_kappa}
      \kappa_{m,m}>0 \quad\forall\,m\ge1,\qquad \kappa_{m,j}\leq 0\quad \forall\, 1\leq j\le m-1.
  \end{align}
The representation \eqref{eq:newrepresentation} subject to  \eqref{eq:condition_kappa} immediately implies that $\delta_{t}^{\alpha}$ can be written as the product of two M-matrices and is therefore inverse-monotone,
The resulting stability properties of the discrete fractional-derivative operator  then allow us to employ the general error analysis framework developed in \citep{NataliaMeng, NataliaL2,NataliaUpLow, NataliaSemi, NataliaSean}.

Our results are novel and different from the earlier literature as follows.
\begin{itemize}
  \item
  While the backward L2 scheme on nonuniform temporal meshes has not been addressed in the literature,
  in this paper we establish sharp pointwise-in-time error bounds in the $L_2(\Omega)$  norm
  on graded temporal meshes with arbitrary degree of grading.

  \item
  The error bounds remain sharp both for
  a singular behavior of the exact solution of \eqref{eq:problem} at the initial time of type
  $t^\alpha$ and, more generally, of type
  $t^\sigma$
      for any fixed $\sigma \in (0,1) \cup (1,2)$.
To handle this more general case, we establish our key stability result, Theorem~\ref{th-euniformgrid}, for arbitrarily graded meshes and remove a restriction imposed in \cite{NataliaL2}, as discussed in Remark~\ref{rem_th-euniformgrid}.

\item While the proof of the main stability result, Theorem~\ref{th-euniformgrid},
borrows from \citep{NataliaL2} the idea of using a nonstandard basis
(see Fig.\,\ref{fig_basis}\,(left)), we simplify this approach by using a constant
$\beta$ in \eqref{eq:newrepresentation}. At the same time, we verify a new condition
\eqref{eq:newcondition3}, which has not previously been addressed in the literature
in the context of the L2 operator considered here.

  \item
  Furthermore, we extend our error bounds to the semilinear case, using a simple idea introduced in the proof of \citep[Lemma~2.2]{NataliaSemi}.

\end{itemize}

The paper is organised as follows. Sufficient conditions on the temporal mesh for the inverse-monotonicity of the discrete fractional-derivative operator in the form \eqref{eq:newrepresentation}, \eqref{eq:condition_kappa} are obtained in Section~\ref{se:inv-mono}. The stability properties of \(\delta_t^\alpha\), including the key stability result of Theorem~\ref{th-euniformgrid}, are then established in Section~\ref{sec_stability}. Next, in Section~\ref{sec_trunc}, we estimate the temporal truncation error of the L2 fractional-derivative operator and complete the error analysis for a version of \eqref{eq:problem} without spatial derivatives. Similar error bounds for linear and semilinear subdiffusion equations are given in Sections~\ref{sec_err_linear} and~\ref{sec_semi}, respectively. Our theoretical findings are illustrated by numerical experiments in Section~\ref{sec_numerics}.

{\it Notation.}
We write
 $a\simeq b$ when $a \lesssim b$ and $a \gtrsim b$, and
$a \lesssim b$ when $a \le Cb$ with a generic positive constant $C$ depending on $\Omega$, $T$, $u_0$ and
$f$,
but not 
%
 on the total numbers of degrees of freedom in space or time.
  Also, 
  we shall use the standard norms
  in the space $L_2(\Omega)$ and the related standard Sobolev space $H^1(\Omega)=W_2^1(\Omega)$,
  while  $H^1_0(\Omega)$ is the standard space of functions in $H^1(\Omega)$ vanishing on $\pt\Omega$.


\section{Inverse-monotonicity of the discrete fractional-derivative operator}\label{se:inv-mono}

  \begin{figure}[t!]
\noindent~\hfill%
\begin{tikzpicture}[scale=0.23]
  \draw[->] (-0.2,0) -- (24.5,0) ;
  \draw[->] (0,-0.2) -- (0,11.5) ;
  \node[left] at (0,10) {$1$};
  \node[left] at (0,6) {$\beta\,\,$};
  \node[left] at (0,3.4) {$\beta^2$};
  \node[left] at (0,1.2) {$\beta^3$};
\path[draw, thick] (0,0)--(5.5,0)--(8.5,10)--(12,6)--(16,3)--(21,1.5)--(24,1);

\draw[fill] (0,0) circle [radius=0.3] node[below] {$0$};
\draw[fill] (1.5,0) circle [radius=0.3];
\draw[fill] (3.5,0) circle [radius=0.3];
\draw[fill] (5.5,0) circle [radius=0.3] node[below] {$t_{j-1}$};
\draw[fill] (8.5,10) circle [radius=0.3]node[above right] {$\Phi^j$};
\draw[fill] (16,3) circle [radius=0.3];
\draw[fill] (12,6) circle [radius=0.3];
\draw[fill] (21,1.5) circle [radius=0.3];

\path[draw,help lines]  (-0.3,10)--(8.9,10);
\path[draw,help lines]  (-0.3,6)--(12.4,6);
\path[draw,help lines]  (-0.3,3)--(16.4,3);
\path[draw,help lines]  (-0.3,1.5)--(21.4,1.5);

\path[draw,help lines]  (8.5,-0.3)--(8.5,10.4);
\path[draw,help lines]  (12,-0.3)--(12,6.4);
\path[draw,help lines]  (16,-0.3)--(16,3.4);
\path[draw,help lines]  (21,-0.3)--(21,1.9);

\node[below]  at (8.5,0) {$t_{j}$};
\node[below]  at (12,0) {$t_{j+1}$};
\node[below]  at (16,0) {$t_{j+2}$};
\node[below]  at (21,0) {$t_{j+3}$};
\end{tikzpicture}%
\hfill%
\begin{tikzpicture}[scale=0.23]
  \draw[->] (-0.2,0) -- (18,0) ;
  \draw[->] (0,-0.2) -- (0,11.5) ;
  \node[left] at (0,10) {$1$};
\path[draw, thick] (0,0)--(5.5,0)--(8.5,10)--(12,0)--(16,0)--(17,0);

\draw[fill] (0,0) circle [radius=0.3] node[below] {$0$};
\draw[fill] (1.5,0) circle [radius=0.3];
\draw[fill] (3.5,0) circle [radius=0.3];
\draw[fill] (5.5,0) circle [radius=0.3] node[below] {$t_{j-1}$};
\draw[fill] (8.5,10) circle [radius=0.3]node[above right] {$\phi^j$};
\draw[fill] (16,0) circle [radius=0.3];
\draw[fill] (12,0) circle [radius=0.3];

\path[draw, dotted, very thick] (0,10)--(5.5,10)--(8.5,0)--(12,0)--(16,0)--(17,0);
\draw (5.5,10) circle [radius=0.3]node[above left] {$\psi^j$};
\draw (1.5,10) circle [radius=0.3];
\draw (3.5,10) circle [radius=0.3];
\draw (0,10) circle [radius=0.3];
\draw (8.5,0) circle [radius=0.3];


\node[below]  at (8.5,0) {$t_{j}$};
\node[below]  at (12,0) {$t_{j+1}$};
\node[below]  at (16,0) {$t_{j+2}$};
%
\end{tikzpicture}\hfill~%
 \caption{\label{fig_basis}\it\small
Nonstandard basis $\{\Phi^j\}$  from \eqref{eq:Phi} (left),
 hat-function basis $\{\phi^j\}$ (right, solid),  and
a Heaviside-type  basis $\{\psi^j\}$ (right, dotted).}
 \end{figure}
To describe a representation of type \eqref{eq:newrepresentation}  on a non-uniform temporary mesh, it is convenient to employ
a nonstandard basis $\{\Phi^{j}(t_{k})\}_{j=0}^{M}$ (see Fig.\,\ref{fig_basis}\,(left)) for functions in $\mathbb{R}^{M+1}$ associated with the mesh $\{t_{k}\}_{k=0}^{M}$, introduced in \citep{NataliaL2}, which is defined by
    \begin{align}\label{eq:Phi}
        \Phi^{j}(t_{k}) := 0\;\; \hbox{for} \ k\leq  j-1,\quad \Phi^{j}(t_{j}) := 1,\quad \Phi^{j}(t_{k}):= \beta\,\Phi^{j}(t_{k-1})\;\; \hbox{for}\ k\geq j+1.
    \end{align}
We now recall two results from \citep{NataliaL2}, which apply to any discretization that one may want to represent as \eqref{eq:newrepresentation} (which were presented in \citep{NataliaL2} in a more general context of a variable $\beta=\beta_j$).

\begin{lemma}[{\citep[Lemma~2.2]{NataliaL2}}]\label{le:tran}
Given $\beta\in[0,1)$ and the basis \eqref{eq:Phi}, the coefficients $\kappa_{m,j}$ in \eqref{eq:newrepresentation} are described by
    \begin{align}\label{kappa_Phi}
        \frac{\kappa_{m,j}}{1-\beta} = \delta_t^\alpha \Phi^{j}(t_m)\qquad \forall\, 0\leq j\leq m\leq M,
    \end{align}
where $\delta_t^\alpha \Phi^{j}(t_m)$ $\forall\, k$ is understood as $\pt_t^\alpha (\Pi\Phi^{j})(t_m)$.
\end{lemma}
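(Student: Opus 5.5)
The plan is to use two facts: $\delta_t^\alpha$ is linear in the mesh values $\{U^k\}$, and $\{\Phi^j\}_{j=0}^M$ is a basis whose coefficients are exactly the $V^j$. The whole argument reduces to showing that
\[
U^k=\sum_{j=0}^{M} V^j\,\Phi^j(t_k)\qquad\forall\,k,
\]
after which we apply $\delta_t^\alpha$ and compare with \eqref{eq:newrepresentation}. One normalization point needs care: the factor $1-\beta$. I will work with the rescaled coefficients $(1-\beta)V^j=U^j-\beta U^{j-1}$ for $j\ge1$.

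The first step is to establish the expansion
\[
U^k=\sum_{j=0}^{k}\bigl(U^j-\beta U^{j-1}\bigr)\,\beta^{k-j},\qquad U^{-1}:=0.
\]
It follows from a telescoping sum, or by induction on $k$ using $U^k=\beta U^{k-1}+(U^k-\beta U^{k-1})$. By \eqref{eq:Phi}, $\Phi^j(t_k)=\beta^{k-j}$ for $k\ge j$ and $0$ otherwise. Hence
\[
U=\sum_{j\ge1}(1-\beta)V^j\,\Phi^j+U^0\,\Phi^0 .
\]
For $j=0$ the coefficient is $U^0=V^0$, not $(1-\beta)V^0$. This mismatch at $j=0$ needs attention. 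Either the convention is $V^0$ scaled so that the identity holds, giving $\kappa_{m,0}=(1-\beta)\,\delta_t^\alpha\Phi^0(t_m)$ with the paper's normalization, or we note that $\Phi^0\equiv$ a function with $\delta_t^\alpha$ acting on it. I would check this case separately. In the intended setting $U^0=0$ is assumed for \eqref{eq:condition_kappa}, so the $j=0$ term is immaterial there. To keep the stated formula valid for all $j$, I would interpret it with the paper's convention on $V^0$, adjusting by the factor $1-\beta$.

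The second step uses linearity of $\Pi$ and of $\pt_t^\alpha$. It gives
\[
\delta_t^\alpha U^m=\sum_{j}(1-\beta)V^j\,\delta_t^\alpha\Phi^j(t_m).
\]
Causality matters here: $\Pi U$ on $(0,t_m)$ depends only on $U^0,\dots,U^m$. The first-interval linear interpolant and the backward-quadratic interpolants on $(t_{j-1},t_j)$, $j\le m$, use only nodes up to $t_m$. Therefore $\delta_t^\alpha\Phi^j(t_m)=0$ for $j>m$, and the sum truncates at $j=m$.

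The final step is uniqueness. The $V^j$, $j=0,\dots,m$, are in bijection with $U^0,\dots,U^m$ through a lower-triangular map with nonzero diagonal. Coefficients in the representation $\sum_j\kappa_{m,j}V^j$ are therefore uniquely determined, and comparing coefficients yields \eqref{kappa_Phi}. I expect no real obstacle beyond the normalization of the $j=0$ term.
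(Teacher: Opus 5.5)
Your core argument is correct and is the standard proof behind the cited \citep[Lemma~2.2]{NataliaL2}; the paper itself gives no proof. The argument runs as follows.
\begin{itemize}
\item The telescoping identity $U^k=\sum_{j=0}^k(U^j-\beta U^{j-1})\beta^{k-j}$ expands $U$ in the basis $\{\Phi^j\}$.
\item Linearity of $\Pi$ and $\pt_t^\alpha$, together with causality, writes $\delta_t^\alpha U^m$ as a combination of $\delta_t^\alpha\Phi^j(t_m)$, $j\le m$.
\item The lower-triangular bijection between $(U^0,\dots,U^m)$ and $(V^0,\dots,V^m)$ makes the $\kappa_{m,j}$ unique, so coefficients may be compared.
\end{itemize}
This proves \eqref{kappa_Phi} for $1\le j\le m$.

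Your treatment of $j=0$, however, is wrong as written. Your first step correctly shows that the coefficient of $\Phi^0$ is $U^0=V^0$, not $(1-\beta)V^0$. So comparing coefficients under the paper's convention $V^0:=U^0$ gives $\kappa_{m,0}=\delta_t^\alpha\Phi^0(t_m)$.

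Two of your statements are therefore inconsistent with that first step:
\begin{itemize}
\item The claim that ``the paper's normalization'' gives $\kappa_{m,0}=(1-\beta)\,\delta_t^\alpha\Phi^0(t_m)$ is false.
\item Your second step silently writes the $j=0$ term as $(1-\beta)V^0\,\delta_t^\alpha\Phi^0(t_m)$.
\end{itemize}

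In fact the displayed identity genuinely fails at $j=0$ whenever $\beta\in(0,1)$. For $m=1$ one has $\delta_t^\alpha U^1=\frac{\tau_1^{-\alpha}}{\Gamma(2-\alpha)}(U^1-U^0)$ and $U^1=(1-\beta)V^1+\beta V^0$. Hence
\[
\kappa_{1,0}=-(1-\beta)\,\frac{\tau_1^{-\alpha}}{\Gamma(2-\alpha)}=\delta_t^\alpha\Phi^0(t_1)\neq(1-\beta)\,\delta_t^\alpha\Phi^0(t_1).
\]

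The correct conclusion is that \eqref{kappa_Phi} holds for $1\le j\le m$. At $j=0$ it holds only if the factor $1-\beta$ is replaced by $1$, or equivalently if one redefines $V^0:=U^0/(1-\beta)$. This is harmless, because the lemma is only ever used with $U^0=V^0=0$.

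You should state this explicitly rather than hedge. The phrase ``$\Phi^0\equiv$ a function with $\delta_t^\alpha$ acting on it'' is not an argument.
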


Next, we formulate sufficient conditions for \eqref{eq:newrepresentation} and \eqref{eq:condition_kappa}
in terms of standard hat-function basis $\{\phi^{j}(t_{k})\}_{j=0}^{M}$ (see Fig.\,\ref{fig_basis}\,(right)) for functions in $\mathbb{R}^{M+1}$ associated with the mesh $\{t_{k}\}_{k=0}^{M}$.

\begin{lemma}[{\citep[Lemma~2.3]{NataliaL2}}]\label{lem_suff}
Given a general temporal grid $\{t_j\}$, the representation \eqref{eq:newrepresentation} of the discrete operator $\delta_t^\alpha$ satisfies \eqref{eq:condition_kappa} if
    \begin{subequations}\label{eq:newcondition}
        \begin{align}
            & \delta_{t}^{\alpha} \phi^{m}(t_m)>0&&\forall\,m\ge 1,\label{eq:newcondition0}\\
            &\delta_{t}^{\alpha}\phi^{m-1}(t_{m}) + \beta \delta_{t}^{\alpha} \phi^{m}(t_m)\le 0
            &&\forall\,m\ge 2,\label{eq:newcondition1}\\
            &\delta_{t}^{\alpha}\phi^{m-2}(t_{m}) + \beta \bigl[\delta_{t}^{\alpha} \phi^{m-1}(t_m)+ \beta\delta_{t}^{\alpha}\phi^{m}(t_m)\bigr]\leq 0
            &&\forall\,m\ge 3,\label{eq:newcondition2}\\
            &\delta_{t}^{\alpha}\phi^{j}(t_{m})\le 0&&\forall\, 1\le j\leq m-3.\label{eq:newcondition3}
        \end{align}
    \end{subequations}
\end{lemma}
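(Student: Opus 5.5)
The plan is to combine Lemma~\ref{le:tran} with the linearity of $\delta_t^\alpha$ after expanding $\Phi^j$ in the hat-function basis. From \eqref{eq:Phi}, $\Phi^j=\sum_{k\ge j}\beta^{k-j}\phi^k$ as mesh functions. Since $\Pi$ is linear and $\delta_t^\alpha\phi^k(t_m)=0$ for $k>m$ (the interpolant $\Pi\phi^k$ vanishes on $(0,t_m)$ when $k>m$), Lemma~\ref{le:tran} gives
\begin{align*}
\frac{\kappa_{m,j}}{1-\beta}=\delta_t^\alpha\Phi^j(t_m)=\sum_{k=j}^{m}\beta^{k-j}\,\delta_t^\alpha\phi^k(t_m),\qquad 0\le j\le m.
\end{align*}
Denote $a_{m,k}:=\delta_t^\alpha\phi^k(t_m)$ and $S_{m,j}:=\sum_{k=j}^m\beta^{k-j}a_{m,k}$. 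Then $\kappa_{m,j}=(1-\beta)S_{m,j}$ with $1-\beta>0$, so it suffices to sign the sums $S_{m,j}$.

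The positivity of the diagonal entry is immediate: $S_{m,m}=a_{m,m}>0$ by \eqref{eq:newcondition0}. For $j=m-1$ with $m\ge2$, $S_{m,m-1}=a_{m,m-1}+\beta a_{m,m}\le0$ by \eqref{eq:newcondition1}. For $j=m-2$ with $m\ge3$, $S_{m,m-2}=a_{m,m-2}+\beta(a_{m,m-1}+\beta a_{m,m})\le0$ by \eqref{eq:newcondition2}. Note that the range $j\ge1$ means $j=m-2$ only arises when $m\ge3$, which matches the stated ranges of the conditions.

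For $1\le j\le m-3$, I use the recursion $S_{m,j}=a_{m,j}+\beta S_{m,j+1}$ and induct downward in $j$, starting from $S_{m,m-2}\le0$. If $S_{m,j+1}\le0$, then \eqref{eq:newcondition3} gives $a_{m,j}\le0$, and since $\beta\ge0$ we get $S_{m,j}\le0$. This covers all $1\le j\le m-1$ and so establishes \eqref{eq:condition_kappa}.

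The only delicate point is justifying the expansion: that $\Phi^j$ coincides with $\sum_k\beta^{k-j}\phi^k$ at every mesh node, and that $\delta_t^\alpha$ at $t_m$ depends only on nodal values up to $t_m$. Both follow from the definitions \eqref{eq:Phi}, \eqref{eq:defdelta}, since the interpolant on $(t_{i-1},t_i)$ uses only nodes up to $t_i$.
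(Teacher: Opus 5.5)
Your proposal is correct and follows essentially the same route as the paper. Both arguments rest on Lemma~\ref{le:tran} and the nodal identity $\Phi^j=\phi^j+\beta\,\Phi^{j+1}$. You unroll this identity into the closed form $\kappa_{m,j}/(1-\beta)=\sum_{k=j}^{m}\beta^{k-j}\delta_t^\alpha\phi^k(t_m)$, correctly using that $\Pi\phi^k$ vanishes on $(0,t_m)$ for $k>m$. You then read off the signs of $\kappa_{m,m},\kappa_{m,m-1},\kappa_{m,m-2}$ from \eqref{eq:newcondition0}--\eqref{eq:newcondition2}. Finally, you close with the same downward induction $\kappa_{m,j}=(1-\beta)\,\delta_t^\alpha\phi^j(t_m)+\beta\,\kappa_{m,j+1}$ based on \eqref{eq:newcondition3} and $\beta\ge 0$.
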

\begin{proof}
	While the proof is similar to the one in \citep{NataliaL2}, we sketch it since, by contrast to \citep{NataliaL2}, the present lemma includes an additional condition \eqref{eq:newcondition2} and is formulated under no restrictions on the grid.
First, note that
    \beq\label{Phi_phi}
        \Phi^m(t_k)=\phi^m(t_k)\;\;\forall\,k\le m,\qquad
        \Phi^j(t_k)=\phi^j(t_k)+\beta\,\Phi^{j+1}(t_k)\;\;\forall\,j,k\ge 0.
    \eeq
Combining this with \eqref{eq:Phi}, one concludes that \eqref{eq:newcondition0}, \eqref{eq:newcondition1}, and \eqref{eq:newcondition2} are respectively equivalent to
 $\kappa_{m,m}>0$, $\kappa_{m,m-1}\le 0$, and $\kappa_{m,m-2}\le 0$.
Finally, $\kappa_{m,j}\le 0$ for any $j\le m-3$ is proved by induction, as \eqref{eq:Phi} combined with \eqref{Phi_phi} yields $\kappa_{m,j}=(1-\beta)\delta_t^\alpha\phi^j(t_m)+\beta\kappa_{m,j+1}$, so, in view of  \eqref{eq:newcondition3}, $\kappa_{m,j+1}\le 0$ implies $\kappa_{m,j}\le 0$ for any $j\le m-3$.
\end{proof}

Let $\tau_{j}:=t_{j}-t_{j-1}$ for $m = 1,\dots,M$.
Throughout the remainder of this section, it will be assumed $\forall\,j\ge1$ that, for some fixed constant $\eps\ge 0$,
    \beq\label{rho_cond}
        1\le \rho_{j+1}:=\frac{\tau_{j+1}}{\tau_{j}}\le1+\eps
        \quad\Rightarrow\quad
        0\le \frac{\tau_{j+1}-\tau_j}{\tau_{j+1}+\tau_j}\le \frac{\eps}{2+\eps},\quad
        \frac12\le \frac{\tau_{j+1}}{\tau_{j+1}+\tau_j}\le \frac{1+\eps}{2+\eps}.
    \eeq
Later we shall show that this condition may be relaxed, while the main result of this section will be presented under condition~\eqref{rho_cond} in the following theorem, to the proof of which we devote the remainder of this section.

\begin{theorem}\label{theo_beta_repr}
There exists a sufficiently small positive constant $\eps$, independent of $\alpha$, such that,
if the temporal grid satisfies \eqref{rho_cond} $\forall\, j\ge 1$, then the representation \eqref{eq:newrepresentation} of the discrete operator $\delta_t^\alpha$ of \eqref{eq:defdelta} satisfies \eqref{eq:condition_kappa} with $\beta:=1-2^{-\alpha}$.
\end{theorem}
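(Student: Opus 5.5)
By Lemma~\ref{lem_suff}, it suffices to verify the four conditions \eqref{eq:newcondition0}--\eqref{eq:newcondition3} with $\beta=1-2^{-\alpha}$, for a grid satisfying \eqref{rho_cond} with $\eps$ small. The first step is to write $\delta_t^\alpha\phi^j(t_m)$ explicitly. Since $\Pi\phi^j$ is supported on $(t_{j-1},t_{j+2})$, only the intervals $(t_{j-1},t_j)$, $(t_j,t_{j+1})$ and $(t_{j+1},t_{j+2})$ contribute. On each of these, $(\Pi\phi^j)'$ is a linear function of $s$ whose coefficients are determined by the local step ratios. Hence
\[
\Gamma(1-\alpha)\,\delta_t^\alpha\phi^j(t_m)=\sum_{k}\int_{t_{k-1}}^{t_k}(t_m-s)^{-\alpha}(\Pi_{2,k}\phi^j)'(s)\,\dd s .
\]
After the substitution $s=t_{k-1}+\theta\tau_k$, every term becomes an integral over $\theta\in(0,1)$ with weights depending only on $\rho_k$ and on the ratios $\tau_k/(t_m-t_{k-1})$.

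I then treat the case $\eps=0$ (uniform mesh) first and obtain the claimed inequalities with a margin, uniformly in $\alpha\in(0,1)$ and $m$. For \eqref{eq:newcondition0}--\eqref{eq:newcondition2} only the last three intervals matter, and the quantities reduce to explicit functions of $\alpha$ of the form $\int_0^1(\ldots)(1-\theta)^{-\alpha}\dd\theta$, which can be computed in closed form.

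The choice $\beta=1-2^{-\alpha}$ should be exactly what makes \eqref{eq:newcondition1} hold. I expect $\delta_t^\alpha\phi^{m-1}(t_m)$ to combine with $\beta\,\delta_t^\alpha\phi^m(t_m)$ so that the leading terms $2^{-\alpha}$ and $1$ cancel, or produce a sign-definite remainder. I would check this by direct computation, including the first levels $m=1,2$ where $\Pi_{1,1}$ enters.

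For \eqref{eq:newcondition3}, with $j\le m-3$, the kernel $(t_m-s)^{-\alpha}$ is smooth on $\operatorname{supp}\Pi\phi^j$. The integral of $(\Pi\phi^j)'$ over its support vanishes, so integrating by parts gives
\[
-\alpha\int (t_m-s)^{-\alpha-1}\,\Pi\phi^j(s)\,\dd s .
\]
This is nonpositive provided the weighted integral of $\Pi\phi^j$ is positive. The difficulty is that $\Pi_{2,j+2}\phi^j$ is negative on $(t_{j+1},t_{j+2})$, since backward-quadratic interpolation overshoots. I would bound the positive mass on $(t_{j-1},t_{j+1})$ against this negative lobe, using that $(t_m-s)^{-\alpha-1}$ is increasing in $s$ but varies by a bounded factor across three steps when $m-j\ge3$. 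I regard this as the main obstacle. The worst case is $j=m-3$, where the kernel grows fastest, and it needs a careful, possibly second-order, Taylor or convexity argument in $s$ to show that the negative part, of size roughly $\tau/12$, is dominated uniformly in $\alpha$. If a direct estimate fails, I would expand $(t_m-s)^{-\alpha-1}$ about the midpoint and use the zero first-moment structure of the quadratic pieces.

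Finally, all the quantities involved depend continuously on the step ratios $\rho_k\in[1,1+\eps]$. Since the uniform-mesh inequalities hold with a margin that is bounded away from zero uniformly in $\alpha$, a perturbation argument yields a small $\eps$, independent of $\alpha$, for which the conditions persist. Near $\alpha\to0$ the margins may degenerate, since $\beta\to0$, so I would normalize each condition by $\alpha$ before taking the margins.
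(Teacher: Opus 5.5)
Your plan is essentially the paper's proof: reduce to \eqref{eq:newcondition0}--\eqref{eq:newcondition3} via Lemma~\ref{lem_suff}, settle the uniform grid with margins uniform in $\alpha$, and perturb for small $\eps$. There \eqref{eq:newcondition3} comes from integrating by parts against $(t_m-s)^{-\alpha-1}$ and showing that the hat-function contribution dominates the quadratic corrections (the paper's bound \eqref{v_der_bound} on $v_j''$ plus the residual estimate \eqref{R_j_final}), while \eqref{eq:newcondition0}--\eqref{eq:newcondition2} are checked through the normalized quantities $\A_m$, $\B_m$, $\D_m$. One correction to your expectations: nothing cancels in \eqref{eq:newcondition1}, since on a uniform grid the leading part of $\B_m-\beta\A_m$ is $1-2^{-\alpha}=\beta>0$; it is \eqref{eq:newcondition2} that genuinely needs $\beta>0$, because $\delta_t^\alpha\phi^{m-2}(t_m)$ is itself positive on uniform grids for moderate $\alpha$ (e.g.\ $\alpha=\frac12$), and the paper settles it by reducing it to the scalar inequality $\alpha+\frac12\le 2^\alpha$, so that condition deserves at least as much care as \eqref{eq:newcondition3}.
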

\begin{proof}
 In view of Lemma~\ref{lem_suff}, it suffices to check that conditions \eqref{eq:newcondition} are satisfied.
Condition \eqref{eq:newcondition3} is established in Lemma~\ref{lem_cond3}, while
conditions \eqref{eq:newcondition0}--\eqref{eq:newcondition2} in Lemma~\ref{lem_cond_ab} below.
\end{proof}

\subsection{Preliminary evaluations for $\delta_t^\alpha\phi^j(t_m)$}\label{ssec_prelim}

To check conditions \eqref{eq:newcondition}, we first need to evaluate $\delta_t^\alpha\phi^j(t_m)$ using the definition \eqref{eq:defdelta} of $\delta_t^\alpha$.

Consider the continuous piecewise-linear function $\psi^{j}$ with nodal values $\psi^{j}(t_{k})= 1$ for $k<j$ and $\psi^{j}(t_{k})= 0$ for $k\geq j$, i.e.
${\psi}^{j}(t)$ is $(t_{j}-t)/\tau_{j}$ for $t\in[t_{j-1},t_{j}]$, and, otherwise, it is $1$ for $t\le t_{j-1}$ and $0$ for $t\ge t_j$
(while $\psi^0(t)=0$ $\forall\,t\ge 0$); see Fig.\,\ref{fig_basis}\,(right).
Then
    \beq\label{psi_phi}
        \phi^{j}(t)= {\psi}^{j+1}(t)- {\psi}^{j}(t)
        \quad\Rightarrow\quad
        \delta_{t}^{\alpha}\phi^{j}(t_{m})= \delta_{t}^{\alpha}{\psi}^{j+1}(t_{m})- \delta_{t}^{\alpha}{\psi}^{j}(t_{m})\quad \forall\,j\ge 0.
    \eeq
Next, recalling the definition of $\Pi$ in \eqref{eq:defdelta}, a calculation shows that
    \[
        \Pi\psi^{j}(t)=\psi^{j}(t)-B^j(t-t_j),\qquad
        B^j(s):=
                \left\{
                \begin{array}{cl}
                \displaystyle
                    \frac{s(\tau_{j+1}-s)}{\tau_j(\tau_j+\tau_{j+1})}& \mbox{for}\ s\in(0,\tau_{j+1}),\;j\ge1,\\[0.3cm]
                \displaystyle
                    \frac{s(\tau_{j}+s)}{\tau_j(\tau_{j-1}+\tau_j)}&  \mbox{for}\ s\in(-\tau_j,0),\;j\ge 2,\\[0.5cm]
                0&\mbox{otherwise}.
                \end{array}
                \right.
    \]
For $j=1$, the above implies that $B^1(s):=0$ for $s\in(-\tau_1,0)$, which is equivalent to $B^1(t-t_1)=0$ for $t\in(0,t_1)$.
Hence, by \eqref{eq:defdelta},
    \beq\label{psi_B}
        \delta_t^\alpha \psi^j(t_m)
        =\pt_t^\alpha (\Pi\psi^j)(t_m)
        =\pt_t^\alpha \bigl[\psi^j(t)-B^j(t-t_j)\bigr]\Big|_{t=t_m}
        \quad\forall\,j\ge 1.
    \eeq
Here, to evaluate $\delta_t^\alpha \psi^j(t_m)$, note that for any $t>t_j$ with $j\ge 1$, by \eqref{eq:def-Caputoderivative}, one has
    \beq\label{pt_psi}
        \Gamma(1-\alpha)\,\pt_t^\alpha \psi^j(t)
        =-\int_{t_{j-1}}^{t_{j}}\!\! (t-s)^{-\alpha} \,\tau_j^{-1}  \, \dd s
        =\frac{(t-t_j)^{1-\alpha}-(t-t_{j-1})^{1-\alpha}}{(1-\alpha)\,\tau_{j}}.
    \eeq
Next, we need to evaluate $\pt_t^\alpha B^j(t-t_j)\big|_{t=t_m}$ in \eqref{psi_B}, for which we recall that
$B^j$ has support on $(-\tau_j,\tau_{j+1})$ for $j\ge2$ and $(0,\tau_2)$ for $j=1$.
Thus, it is convenient to evaluate
$\pt_{t;-\infty}^\alpha B^j(t)$ for any $t>\tau_{j+1}$, where $\pt_{t;-\infty}^\alpha$ is defined similarly to
\eqref{eq:def-Caputoderivative}, only  with $\int_0^t$ replaced by $\int_{-\infty}^t$.
For this modification of \eqref{eq:def-Caputoderivative}, an integration by parts yields
    \beq\label{delta_BB}
        -\Gamma(1-\alpha)\,\pt_{t;-\infty}^\alpha B^j(t)=\alpha\int_{-\tau_{j}}^{\tau_{j+1}}\!\!(t-s)^{-\alpha-1}\,B^j(s) \,  \dd s
        =
        \frac{b_{j}(t)-b_{j-1}(t+\tau_j)}{\tau_j},
    \eeq
where $t\ge\tau_{j+1}$ and
    \beq\label{b_j_def}
        b_{j}(t):=\alpha\int_0^{\tau_{j+1}}\frac{s(\tau_{j+1}-s)}{
        \tau_{j}+\tau_{j+1} }\,(t-s)^{-\alpha-1}\, \dd s
        \quad\forall\,j\ge 1,
        \qquad
        b_0(t):=0.
    \eeq

Now, combining  the above findings
 \eqref{psi_B}, \eqref{pt_psi}, and \eqref{delta_BB}
(using $t:=t_m-t_j$ and $t+\tau_j:=(t_m-t_j)+\tau_j=t_m-t_{j-1}$ in \eqref{delta_BB}) yields, $\forall\,1\le j\le m-1$,
    \beq\label{delta_psi}
        \Gamma(1-\alpha)\,\delta_t^\alpha \psi^j(t_m)=
        \frac{(t_m-t_j)^{1-\alpha}-(t_m-t_{j-1})^{1-\alpha}}{(1-\alpha)\,\tau_{j}}
        +\frac{b_{j}(t_m-t_{j})-b_{j-1}(t_m-t_{j-1})}{\tau_j},
    \eeq
or, equivalently, $\forall\,1\le j\le m-1$ one has
    \beq\label{nu_def}
        \Gamma(1-\alpha)\,\delta_t^\alpha \psi^j(t_m)=\frac{v_j(t_m-t_{j})-v_{j-1}(t_m-t_{j-1})}{\tau_{j}}\;\;\forall\,j\ge 1,\quad
        v_j(t):=\frac{t^{1-\alpha}}{1-\alpha}+b_j(t).
    \eeq

Finally, we substitute the above in the representation of $\delta_{t}^{\alpha}\phi^{j}(t_{m})$ in \eqref{psi_phi}
$\forall\,1\le j\le m-2$ as follows:
    \begin{align}\label{delta_phi_nu_a}
        \Gamma(1-\alpha)\,\delta_t^\alpha \phi^j(t_m)&=
        \frac{v_{j+1}(t_m-t_{j+1})-v_{j}(t_m-t_{j})}{\tau_{j+1}}
        -\frac{v_j(t_m-t_{j})-v_{j-1}(t_m-t_{j-1})}{\tau_{j}}.
        \\
\intertext{This immediately reduces to the second derivative of $v_j(t)$ with and added residual term $R^j$:}
        \label{delta_phi_nu}
        \Gamma(1-\alpha)\,\delta_t^\alpha \phi^j(t_m)&=\frac{\tau_j+\tau_{j+1}}{2}\, v_j''(t_m-t^*_{j})+R_j\quad\mbox{for some~~}t_j^*\in(t_{j-1},t_{j+1}),
        \\
\intertext{$\forall\,1\le j\le m-2$, where}
        \label{delta_phi_R}
        R_j&:=\frac{(b_{j+1}-b_j)(t_m-t_{j+1})}{\tau_{j+1}}
        -\frac{(b_j-b_{j-1})(t_m-t_{j-1})}{\tau_{j}}.
    \end{align}

\begin{remark}\label{rem_phi_m}
Setting $b_j(0):=0$ $\forall\,j$ (so far $b_j(t)$ was defined for $t\ge\tau_{j+1}$) allows for \eqref{delta_psi} and \eqref{nu_def} become valid $\forall\,j\le m$, including $\delta_t^\alpha\psi^m(t_m)$, while \eqref{delta_phi_nu_a} will be valid $\forall\,1\le j\le m-1$, including $\delta_t^\alpha\phi^{m-1}(t_m)$.
\end{remark}

To estimate $\delta_t^\alpha \phi^j(t_m)$ $\forall\,j\le m-3$, it will be useful to have ready estimates for both $b_j(t)$ of \eqref{b_j_def} and $b_j''(t)$, for $t\ge 2\tau_j$.
For these functions, a calculation shows that
    \beq\label{b_j_I_beta}
        b_j(t)=\alpha\, t^{-\alpha-1}\frac{\tau_{j+1}^3}{
        \tau_{j}+\tau_{j+1} } I_{\alpha+1}(\tau_{j+1}/t),\;\;\;
        b''_j(t)=\alpha(\alpha+1)(\alpha+2)\,t^{-\alpha-3}\frac{\tau_{j+1}^3}{
        \tau_{j}+\tau_{j+1} }I_{\alpha+3}(\tau_{j+1}/t),
    \eeq
where
    \beq\label{I_beta_def}
        I_{\beta}(\hat\tau):=\int_0^{1}s(1-s)\,(1-s\hat \tau)^{-\beta}\, \dd s
        \;\;\Rightarrow\;\;
        I_{\beta}(\tau_{j+1}/t)=\int_0^{\tau_{j+1}}\frac{s(\tau_{j+1}-s)}{\tau_{j+1}^{3}}\,(1-s/t)^{-\beta}\, \dd s.
    \eeq
For $b_j$, the above is straightforward from \eqref{b_j_def}, while for $b_j''$, we first use $\frac{ \dd  ^2}{ \dd t^2}(t-s)^{-\alpha-1}=(\alpha+1)(\alpha+2)(t-s)^{-\alpha-3}$,
and then represent $b_j''$ in terms of $I_{\alpha+3}$ similarly to $b_j$.
To estimate $I_{\beta}$, note that $(1-s\hat \tau)^{-\beta}$ is increasing in $\hat \tau$ (as $\beta>0$)
and is also increasing in $\beta$, so
    \beq\label{I_beta}
        I_{\beta}(\tau_{j+1}/t)\le I_{\beta}({\textstyle\frac12})=:\bar I_{\beta}\;\;\forall\,t\ge2\tau_{j+1},
        \quad
        \bar I_{\alpha+1}<\bar I_{2}<{\textstyle\frac13},\quad
        \bar I_{\alpha+2}<\bar I_{3}<{\textstyle\frac12},\quad
        \bar I_{\alpha+3}<\bar I_{4}={\textstyle\frac23}.
    \eeq
Here $\bar I_2=12\ln2-8<\frac13$, as well as
$\bar I_3=6-8\ln2<\frac12$
and $\bar I_4=\frac23$, (as well as a more general $\bar I_\beta$) can be evaluated by twice integrating by parts.
Another useful bound,
    \beq\label{I_beta_diff}
        0\le I_{\alpha+1}(\tau_{j+1}/t)-I_{\alpha+1}(\tau_{j}/t)\le\frac{\tau_{j+1}+\tau_{j}}{\tau_{j+1}}\,\frac{3\eps}{4(2+\eps)}\qquad \forall\,t\ge2\tau_{j+1},
    \eeq
follows from $0\le I'_\beta(\hat\tau)\le \beta I_{\beta+1}(\hat\tau)< 3\cdot \frac12 =\frac32$, for the case $\beta:=\alpha+1$ (where we also used
$I_{\alpha+2}(\hat\tau)\le \bar I_{\alpha+2}<\frac12$, in view of \eqref{I_beta}).
Additionally, for \eqref{I_beta_diff} we used
$\frac{\tau_{j+1}-\tau_{j}}{t}\le \frac{\tau_{j+1}+\tau_{j}}{\tau_{j+1}}\,\frac\eps{2(2+\eps)}$, which follows from
$\tau_{j+1}-\tau_{j}\le (\tau_{j+1}+\tau_{j})\,\frac\eps{2+\eps}$, in view of \eqref{rho_cond},
combined with $t\ge2\tau_{j+1}$.

\subsection{Condition \eqref{eq:newcondition3}}\label{se:inv-mono-condc}

Now we are prepared to establish \eqref{eq:newcondition}, and we start with \eqref{eq:newcondition3}.

\begin{lemma}[condition \eqref{eq:newcondition3}]\label{lem_cond3}
There exists a sufficiently small positive constant $\eps$, independent of $\alpha$, such that,
under condition \eqref{rho_cond} $\forall\, j\ge 1$, one enjoys
$\delta_{t}^{\alpha}\phi^{j}(t_{m})\le 0$ $\forall\,1\le j\leq m-3$.
\end{lemma}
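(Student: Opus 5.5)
We will work from the representation \eqref{delta_phi_nu}, which expresses $\Gamma(1-\alpha)\,\delta_t^\alpha\phi^j(t_m)$ as a main term $\frac{\tau_j+\tau_{j+1}}{2}\,v_j''(t_m-t_j^*)$ plus the residual $R_j$ of \eqref{delta_phi_R}. The aim is to show that the main term is negative with a definite margin, and that $|R_j|$ is bounded by a small multiple of that margin once $\eps$ is small. Write $t:=t_m-t_j^*$. Since $j\le m-3$ and $\rho_k\ge1$, we have $t\ge t_m-t_{j+1}\ge \tau_{j+2}+\tau_{j+3}\ge 2\tau_{j+1}$, so the bounds \eqref{I_beta} and \eqref{I_beta_diff} are available. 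The arguments $t_m-t_{j\pm1}$ appearing in $R_j$ are similarly at least $2\tau_{j+1}$, and also at least $2\tau_j$.

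\emph{Main term.} By \eqref{nu_def},
\[
v_j''(t)=-\alpha t^{-\alpha-1}+b_j''(t).
\]
Using \eqref{b_j_I_beta} with \eqref{I_beta} and $\tau_{j+1}^3/(\tau_j+\tau_{j+1})\le \tau_{j+1}^2\cdot\frac{1+\eps}{2+\eps}$, we get
\[
b_j''(t)\le \alpha(\alpha+1)(\alpha+2)\,t^{-\alpha-1}\,\frac{\tau_{j+1}^2}{t^2}\cdot\frac23\cdot\frac{1+\eps}{2+\eps}.
\]
With $\tau_{j+1}/t\le\frac12$, this gives $b_j''(t)\le \alpha t^{-\alpha-1}\cdot 6\cdot\frac14\cdot\frac23\cdot\frac{1+\eps}{2+\eps}=\alpha t^{-\alpha-1}\frac{1+\eps}{2+\eps}$. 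Hence
\[
v_j''(t)\le -\alpha t^{-\alpha-1}\,\frac{1}{2+\eps}.
\]
If this margin proves too thin for the next step, I would sharpen it by using the better lower bound $t\ge t_m-t_{j+1}\ge 2\tau_{j+1}$ only where it is needed, or by evaluating $\bar I_{\alpha+3}$ more precisely.

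\emph{Residual.} Also by \eqref{b_j_I_beta},
\[
b_{j+1}(s)-b_j(s)=\alpha s^{-\alpha-1}\Big[\tfrac{\tau_{j+2}^3}{\tau_{j+1}+\tau_{j+2}}I_{\alpha+1}(\tau_{j+2}/s)-\tfrac{\tau_{j+1}^3}{\tau_j+\tau_{j+1}}I_{\alpha+1}(\tau_{j+1}/s)\Big].
\]
The bracket is controlled by the mesh ratio through \eqref{rho_cond} and \eqref{I_beta_diff}. The prefactor difference is $O(\eps)\,\tau_{j+1}^2$, and the $I$-difference is $O(\eps)$, so $|b_{j+1}-b_j|(s)\lesssim \eps\,\alpha s^{-\alpha-1}\tau_{j+1}^2$, and likewise for $b_j-b_{j-1}$ at $s=t_m-t_{j-1}$. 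Dividing by $\tau_{j+1}$ and $\tau_j$ respectively gives
\[
|R_j|\lesssim \eps\,\alpha\,\tau_{j+1}\,(t_m-t_{j+1})^{-\alpha-1}.
\]
This bound uses $\tau_j\simeq\tau_{j+1}$ and the fact that $t_m-t_{j-1}\ge t_m-t_{j+1}$.

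\emph{Comparison.} It remains to compare $|R_j|$ with the main term, which is at least $\frac{\tau_j+\tau_{j+1}}{2}\,\alpha\,t^{-\alpha-1}/(2+\eps)$ in absolute value. This is where the main obstacle lies: $t$ can be as large as $t_m-t_{j-1}$, whereas $R_j$ is bounded in terms of $t_m-t_{j+1}$. Their ratio satisfies
\[
\frac{t_m-t_{j-1}}{t_m-t_{j+1}}\le 1+\frac{\tau_j+\tau_{j+1}}{\tau_{j+2}+\tau_{j+3}}\le 2,
\]
so the two powers $(\cdot)^{-\alpha-1}$ differ by at most a factor $2^{\alpha+1}\le4$, uniformly in $\alpha$. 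The residual is therefore at most $C\eps$ times the main term with $C$ independent of $\alpha$. Choosing $\eps$ small then gives $\delta_t^\alpha\phi^j(t_m)\le0$.

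The delicate part is keeping every constant explicit and independent of $\alpha$. This applies in particular to the two terms of $R_j$, which do not cancel pointwise. I would bound each of them separately by $\eps$ times an $\alpha$-free constant, using the factor $\alpha$ common to all terms, and treat the case $j=1$, where $b_0=0$, in the same way.
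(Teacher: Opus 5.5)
Your argument follows the paper's proof almost step for step. You use the same bound $v_j''(t)\le-\alpha t^{-\alpha-1}/(2+\eps)$ for $t\ge2\tau_{j+1}$, the same $O(\eps)$ control of $R_j$ via \eqref{rho_cond}, \eqref{I_beta} and \eqref{I_beta_diff}, and the same ratio bound $(t_m-t_{j-1})/(t_m-t_{j+1})\le2$ to compare $R_j$ with the main term. There is, however, one step that fails as you state it: the case $j=1$.

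\textbf{The case $j=1$.} Since $b_0=0$, the second term of $R_1$ is $-(b_1-b_0)(t_m)/\tau_1=-b_1(t_m)/\tau_1$. Here $b_1(t_m)=\alpha t_m^{-\alpha-1}\frac{\tau_2^3}{\tau_1+\tau_2}I_{\alpha+1}(\tau_2/t_m)$ has no small prefactor: the ``prefactor difference'' is $\tau_2^3/(\tau_1+\tau_2)-0$, not $O(\eps)\,\tau_2^2$. This term does not vanish even on a uniform mesh, where it is at least $\alpha\tau t_m^{-\alpha-1}/12$. It is therefore of the same order as the main term. Consequently your claimed bound $|R_j|\lesssim\eps\,\alpha\,\tau_{j+1}(t_m-t_{j+1})^{-\alpha-1}$ is false for $j=1$, and this term cannot be bounded by $\eps$ times an $\alpha$-free constant ``in the same way''.

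\textbf{The fix.} You only need $\delta_t^\alpha\phi^j(t_m)\le0$, so you only need an \emph{upper} bound on $R_j$. The offending term is $\le0$, so drop it and bound only the first term $(b_2-b_1)(t_m-t_2)/\tau_2$, which is $O(\eps)$. This is exactly the paper's observation $S_1\ge0$; compare also $R^1=-b_1(t_m)/\tau_1<0$ in the uniform-mesh remark.

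\textbf{A minor omission.} When you apply \eqref{I_beta} and \eqref{I_beta_diff} to $b_{j+1}$ at $t_m-t_{j+1}$, you need $t_m-t_{j+1}\ge2\tau_{j+2}$, not just $\ge2\tau_{j+1}$. This does hold, since $t_m-t_{j+1}\ge\tau_{j+2}+\tau_{j+3}\ge2\tau_{j+2}$.

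With these two adjustments your proof goes through. Your hedge about sharpening the margin in the main term is unnecessary: $(2+\eps)^{-1}$ suffices.
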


\begin{proof}
 To estimate $\delta_{t}^{\alpha}\phi^{j}(t_{m})$ for $1\le j\le m-3$,
we shall rely on a few preliminary results from section~\ref{ssec_prelim}.
Recalling \eqref{delta_phi_nu}, we need to first estimate $v_j''(t)=-\alpha t^{-\alpha-1}+b_j''(t)$ (which readily follows from the definition of $v_j$ in \eqref{nu_def})
for $t\ge  2\tau_{j+1}$.
For this, by \eqref{b_j_I_beta}  combined with
\eqref{I_beta}, 
so $(\alpha+1)(\alpha+2)\, I_{\alpha+3}\le 2\cdot 3\cdot \frac23$ in \eqref{b_j_I_beta}, one gets
    \begin{align}
        -\alpha^{-1}\, v''_j(t)&\ge t^{-\alpha-1}-2\cdot 3\cdot t^{-\alpha-3}\frac{\tau_{j+1}^3}{
        \tau_{j}+\tau_{j+1} }\cdot \frac23
        \notag\\
        \notag
        &\ge
        t^{-\alpha-1}\left[
        1-6\cdot\frac{\tau_{j+1}}{
        4(\tau_{j}+\tau_{j+1}) }\cdot\frac23
        \right]
        \\
        &\ge
        \frac{t^{-\alpha-1}}{2+\eps}
        \qquad\forall\,t\ge2\tau_{j+1},
        \label{v_der_bound}
    \end{align}
where for the second line we  used
$t^{-2}\tau_{j+1}^2\le\frac14$, and for the final line
$1-\frac{\tau_{j+1}}{\tau_{j+1}+\tau_j}\ge \frac{1}{2+\eps}$,  in view of \eqref{rho_cond}.

The above bound \eqref{v_der_bound} for $v_j''(t)$ is now used in \eqref{delta_phi_nu} with $t:=t_m-t^*_{j}$ so $t\ge t_{j+3}-t_{j+1}\ge 2\tau_{j+1}$
(i.e. \eqref{v_der_bound} is applicable),
while $t\le t_m-t_{j-1}$, so $t^{-\alpha-1}\ge (t_m-t_{j-1})^{-\alpha-1}$ in \eqref{v_der_bound},
so
    \beq\label{delta_phi_final}
        -\alpha^{-1}\,\Gamma(1-\alpha)\,\delta_t^\alpha \phi^j(t_m)
        \ge
        \frac{\tau_j+\tau_{j+1}}{2}\,\frac{(t_m-t_{j-1})^{-\alpha-1}}{2+\eps}-\alpha^{-1}\,R_j\qquad
        \forall\,j\ge1.
    \eeq
It remains to estimate $R_j$ defined in \eqref{delta_phi_R},
where both arguments $t_m-{t_{j\pm 1}}\ge t_{j+3}-t_{j+1}\ge 2\tau_{j+1}$.
For $b_j$ we shall employ the representation from \eqref{b_j_I_beta},
which yields
$(b_j-b_{j-1})(t)=\alpha\,S_j(t)\, t^{-\alpha-1}$, where
    $$
        S_j(t):=
        \sigma_{j+1}\, I_{\alpha+1}(\tau_{j+1}/t)-\sigma_{j}\, I_{\alpha+1}(\tau_{j}/t),
        \qquad
        \sigma_{j+1}:=\frac{\tau_{j+1}^3}{\tau_{j}+\tau_{j+1} },
    $$
with one exception $S_1(t):=\sigma_{2}\, I_{\alpha+1}(\tau_{2}/t)>0$ (in view of $b_0=0$).
Hence $R_j$ defined in \eqref{delta_phi_R} can be rewritten as
    $$
        \alpha^{-1}\,R_j=\frac{S_{j+1}(t_m-t_{j+1})}{\tau_{j+1}}\,(t_m-t_{j+1})^{-\alpha-1}
        -\frac{S_j(t_m-t_{j-1})}{\tau_{j}}\,(t_m-t_{j-1})^{-\alpha-1}.
    $$
Note that $|S_j|$  is bounded by $|\sigma_{j+1}-\sigma_{j}|\, \bar I_{\alpha+1}\le |\sigma_{j+1}-\sigma_{j}|\cdot\frac13$ (in view of~\eqref{I_beta}),
summed with $\sigma_{j+1}$ multiplying the bound from \eqref{I_beta_diff}. Hence, one gets
    $$
        \frac{|S_j(t)|}{\tau^2_j}\le
        \frac{|\sigma_{j+1}-\sigma_{j}|}{3\tau_j^2}+
        \frac{\tau_{j+1}^2}{\tau_j^2}\,\frac{3\eps}{4(2+\eps)}
        \le
        \frac{\eps(\frac54+\eps)}{3}+
        \frac{3\eps(1+\eps)^2}{4(2+\eps)}=: C_\eps\quad
        \forall\,j\ge2.
    $$
Here $\tau_{j+1}^2/{\tau_j^2}$ was immediately bounded by $(1+\eps)^2$,
while we also used $\tau_{j}^{-2}|\sigma_{j+1}-\sigma_{j}|\le \eps(\frac54+\eps)$, both in view of \eqref{rho_cond}.
To get the latter, note that
$0\le \tau_{j}^{-2}\tau_{j+1}\frac{\tau_{j+1}^2-\tau_j^2}{\tau_{j}+\tau_{j+1}}\le \eps(1+\eps)$,
while to estimate the remaining difference between $\frac{\tau_{j+1}}{\tau_{j}+\tau_{j+1} }$ and $\frac{\tau_{j}}{\tau_{j-1}+\tau_{j} }$, we recall that both these quantities
are in $[\frac12,\frac{1+\eps}{2+\eps}]$, so their difference is bounded by $\frac{1+\eps}{2+\eps}-\frac12\le \frac14\eps$. Adding $\frac14\eps$ to $\eps(1+\eps)$, indeed, gives the desired upper bound $\eps(\frac54+\eps)$ on $\tau_{j}^{-2}|\sigma_{j+1}-\sigma_{j}|$.
Now, the above bound on $|S_j|/\tau_j^2$ is established, one immediately gets
\begin{align}
\alpha^{-1}\,R_j &\le   C_\eps\,\bigl[{\tau_{j+1}}\,(t_m-t_{j+1})^{-\alpha-1}+
{\tau_{j}}\,(t_m-t_{j-1})^{-\alpha-1}\bigr]
\notag\\
\label{R_j_final}
&\le  C_\eps\,\bigl[4\,{\tau_{j+1}}+
{\tau_{j}}\bigr]\,(t_m-t_{j-1})^{-\alpha-1}.
\end{align}
Note that this bound is valid  $\forall\,j\ge 1$, including the case $j=1$, when we simply used $S_1\ge 0$.
Here we also used
$(t_m-t_{j-1})/(t_m-t_{j+1})\le 1+(t_{j+1}-t_{j-1})/(t_{j+3}-t_{j+1})\le 2$.
Finally, combining \eqref{R_j_final} (in which the constant $C_\eps$ is independent of $\alpha$) with \eqref{delta_phi_final}, for a sufficiently small $\eps$
(such that $(2+\eps)C_\eps\le \frac18$, which is achievable since $C_{\eps}|_{\eps=0}=0$ and $C_{\eps}$ is increasing with respect to $\eps$), one immediately gets the desired $\delta_t^\alpha \phi^j(t_m)\le 0$ for $1\le j\le m-3$.
\end{proof}

\begin{remark}[uniform temporal mesh]
The proof of Lemma~\ref{lem_cond3} somewhat simplifies if $\tau_j=\tau$ $\forall\,j\ge 1$ (which corresponds to $\eps:=0$ in \eqref{rho_cond}).
To be more precise, the estimation of the residual $R^j$ in \eqref{delta_phi_final} becomes unnecessary as
$R^j=0$ $\forall\,j\ge 2$
(in view of $b_{j\pm 1}=b_j$ in
\eqref{delta_phi_R}), while $R^1=-b_1(t_m)/\tau_1<0$ (in view of $b_0=0$).
Additionally, $\tau_{j+1}/(\tau_j+\tau_{j+1})$ simplifies to $\frac12$, which corresponds to
 $\eps=0$ in \eqref{v_der_bound}.
\end{remark}

\subsection{Conditions \eqref{eq:newcondition0}, \eqref{eq:newcondition1}, and \eqref{eq:newcondition2}}

To complete the proof of Theorem~\ref{theo_beta_repr}, in the following Lemma~\ref{lem_cond_ab} we shall establish an equivalent, appropriately normalized, version of conditions \eqref{eq:newcondition0}--\eqref{eq:newcondition2}, rewritten in terms of
    \begin{subequations}\label{AB_def}
        \begin{align}\label{AB_def_a}
            \A_m&:= \tau_{m}^{\alpha}\,\Gamma(2-\alpha)\, \delta_t^\alpha \phi^m(t_m), \\
            \label{AB_def_b}
            -\B_m&:= \tau_{m}^{\alpha}\,\Gamma(2-\alpha)\, \delta_t^\alpha \phi^{m-1}(t_m), \\
            \label{AB_def_f}
            \D_m&:= \tau_{m}^{\alpha}\,\Gamma(2-\alpha)\,
             \delta_t^\alpha \phi^{m-2}(t_m),
         \end{align}
     \end{subequations}
 where we recall that $\Gamma(2-\alpha)=(1-\alpha)\,\Gamma(1-\alpha)$.

\begin{lemma}[conditions \eqref{eq:newcondition0}, \eqref{eq:newcondition1}, and \eqref{eq:newcondition2}]\label{lem_cond_ab}
Set $\beta:=1-2^{-\alpha}$.
There exists a sufficiently small positive constant $\eps$, independent of $\alpha$, such that,
under condition \eqref{rho_cond}, one enjoys
    \beq\label{ABD_cond}
        \A_m>0\quad\forall\, m\ge1, \qquad
        \B_m-\beta\A_m>0\quad\forall\, m\ge2, \qquad
        \D_m\le \beta(\B_m-\beta\A_m)\quad\forall\, m\ge3.
    \eeq
\end{lemma}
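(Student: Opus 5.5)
The plan is to compute $\A_m$, $\B_m$, $\D_m$ explicitly from \eqref{nu_def} and \eqref{delta_phi_nu_a}, extended to $j=m-1,m$ via Remark~\ref{rem_phi_m}. We then treat them as perturbations of their uniform-mesh values, i.e.\ the case $\eps=0$, all $\tau_j=\tau_m$. After multiplication by $\tau_m^\alpha\Gamma(2-\alpha)$ and the substitution $s=\tau_m\hat s$, every quantity becomes a function of $\alpha$ and of the ratios $\rho_{m},\rho_{m-1},\rho_{m-2}\in[1,1+\eps]$ only. These functions are continuous in the ratios, uniformly in $\alpha\in[0,1]$.

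The first step is to write out the uniform values. For $\A_m$, note $\delta_t^\alpha\phi^m(t_m)=-\delta_t^\alpha\psi^m(t_m)$. By \eqref{nu_def} with $b_m(0)=0$,
\[
\A_m=\bigl[t^{1-\alpha}+(1-\alpha)b_{m-1}(t)\bigr]\tau_m^{\alpha-1}\big|_{t=\tau_m},
\]
where $b_{m-1}(\tau_m)$ is given by \eqref{b_j_def} with upper limit $\tau_m$. The integral is explicit, and one obtains $\A_m\ge 1>0$ for every mesh, since $b_{m-1}\ge0$. So the first inequality in \eqref{ABD_cond} needs no smallness at all. Similarly, $\B_m$ and $\D_m$ are finite combinations of $v_{m-1},v_{m-2},v_{m-3}$ evaluated at $\tau_m,\,\tau_m+\tau_{m-1},\,\dots$, with explicit integrals of $s(\tau-s)(t-s)^{-\alpha-1}$. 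For $m=2$ and $m=3$ the terms with $b_0=0$ and $B^1\equiv0$ on $(0,t_1)$ modify the formulas, so these cases are checked separately in the same way.

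The second step, and the heart of the matter, is to verify at $\eps=0$ the strict inequalities $\B-\beta\A>0$ and $\beta(\B-\beta\A)-\D>0$ with $\beta=1-2^{-\alpha}$, uniformly in $\alpha\in(0,1)$. We aim for a margin of the form $c\,\alpha$. This is the step I expect to be the main obstacle. The natural choice of $\beta$ suggests that the key identity comes from the term $2^{1-\alpha}$ appearing in $v(2\tau)$, which should make $\B-\beta\A$ reduce to a manifestly positive expression. I would first expand all three quantities in the closed form
\[
g(\alpha)=\frac{1}{\Gamma(2-\alpha)}\times(\text{linear combination of } k^{1-\alpha},\, k^{2-\alpha},\, k^{-\alpha}),\qquad k=1,2,3.
\]
I would then check positivity by elementary monotonicity or convexity arguments in $\alpha$, separating the limits $\alpha\to0$, where all quantities vanish like $\alpha$, so one divides by $\alpha$, and $\alpha\to1$. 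If a clean analytic proof resists, a fallback is a rigorous bound on the $\alpha$-derivative combined with a finite check.

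The third step is perturbation. Each of $\A_m,\B_m,\D_m$ differs from its uniform value by at most $C\eps\,\alpha$. This follows because the ratios enter smoothly, and because the $\alpha$-factor is inherited: differences of $t^{1-\alpha}$ terms and the $b_j$ terms (which carry the prefactor $\alpha$ by \eqref{b_j_def}) vanish at $\alpha=0$ jointly with the uniform-case margins. Bounds analogous to \eqref{I_beta}--\eqref{I_beta_diff} control the $b_j$ differences. Choosing $\eps$ so small that $C\eps<c$ then yields \eqref{ABD_cond}, with $\eps$ independent of $\alpha$.
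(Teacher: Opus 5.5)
Your overall architecture matches the paper's: normalised explicit formulas, an $\eps=0$ verification with a margin proportional to $\alpha$, then an $O(\eps\alpha)$ perturbation uniform in $\alpha$. Your first and third steps are sound in principle. $\A_m=1+\hat b_{m-1}\ge1$ needs no smallness, the non-$b$ parts of $\B_m,\D_m$ vanish at $\alpha=0$ for all ratios, and the $b$-terms carry the factor $\alpha$. The gap is your second step. It is the entire content of the lemma, and you leave it as a programme (``monotonicity or convexity in $\alpha$'', or a derivative bound plus a finite check) rather than an argument.

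This step is not routine, because the third inequality degenerates differently at the two ends of $(0,1)$. As $\alpha\to0$, $\beta(\B-\beta\A)=O(\alpha^2)$. So the whole margin must come from proving $\D\le-c\,\alpha$ to leading order, i.e.\ that the concavity of $(t_m-t)^{1-\alpha}$ beats the $b$-contributions. As $\alpha\to1$, on the uniform mesh $\D\to c_1(1)=\frac12>0$, since the second difference of $k^{1-\alpha}$ tends to $0$ and $c_2,c_3\to0$. So no sign argument for $\D$ can work there, and the $\beta$-term must genuinely compensate. You give no mechanism that covers both regimes. Your proposed closed form is also off: the $b$-contributions have coefficients rational in $\alpha$, and $\Gamma(2-\alpha)$ is already absorbed by the normalisation.

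The missing idea, as in the paper, is a chain of one-sided bounds that reduces everything to one scalar inequality.
\begin{itemize}
\item \emph{Second condition.} Write $\B_m-\beta\A_m=g(\alpha)+(2^{-\alpha}+\rho_m)\hat b_{m-1}-\rho_m\hat b_{m-2}$, with $g(\alpha)=2^{-\alpha}+\rho_m-\rho_m(1+\rho_m^{-1})^{1-\alpha}\ge\frac12\alpha$ by concavity in $\alpha$. On the uniform mesh $g=1-2^{-\alpha}$, which is the cancellation you anticipated. Positivity then follows from $\hat b_{m-1}\ge\hat b_{m-2}$, obtained from the monotonicity of $I_{\alpha+1}$ and the mesh-ratio bounds.
\item \emph{Bound on $\D_m$.} Do not compute it exactly. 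Keep only the principal term of the second divided difference of $(t_m-t)^{1-\alpha}$, since the remainder is negative; on the uniform mesh this gives $-\alpha(1-\alpha)\,2^{-\alpha-1}$. Also drop the term $-\frac{\tau_m}{\tau_{m-2}}(\hat b_{m-2}-\hat b_{m-3})\le0$.
\item \emph{Reduction of the third condition.} Use $\beta<1$ to eliminate $\hat b_{m-2}$ from both sides. Then combine the identity $\rho_m-\beta(2^{-\alpha}+\rho_m)=2^{-\alpha}(\rho_m-\beta)\le\eps\,2^{-\alpha}+4^{-\alpha}$, the bound $\hat b_{m-1}\le(1+\eps)\,c_1(\alpha)$, and $c_1(\alpha)=\frac{\alpha}{2(2-\alpha)}\le\alpha\,2^{\alpha-2}$. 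After division by $\alpha$, the third condition reduces to $\alpha+\eps+(1+\eps)(\eps+\frac12)\le2^\alpha$. This holds for small $\eps$ because $2^\alpha-\alpha-\frac12>0.4$ on $(0,1)$.
\end{itemize}
This treats both ends of the $\alpha$-range at once and tracks $\eps$ explicitly, so no separate abstract perturbation step is needed.
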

\begin{proof}
We shall again rely on a few results from section~\ref{ssec_prelim}.
%
For any fixed $m\ge 1$, it is convenient to define normalized versions of $v_j$ of \eqref{nu_def} and $b_j$ of \eqref{b_j_I_beta} as follows:
    \begin{align}
        \label{bar_v_def}
        \bar v_j(t)&:=\tau_{m}^{\alpha-1} (1-\alpha)\,v_j(t)=
        (t/\tau_m)^{1-\alpha}+\bar b_j(t),
        \\
        \label{bar_b_def}
        \bar b_j(t)&:=\tau_{m}^{\alpha-1}(1-\alpha)\,b_j(t)
        =
        \tau_{m}^{\alpha-1}\alpha(1-\alpha)\,t^{-\alpha-1}\,\tau_{j+1}^2\,\frac{\tau_{j+1}}{
        \tau_{j}+\tau_{j+1} } \,I_{\alpha+1}(\tau_{j+1}/t),
        \\
        \intertext{where, in agreement with special cases of $b_j(t)$, we set $\bar b_0(t):=0$, as well as $\bar b_j(0):=0$.
        We shall also employ related quantities}
        \hat b_{j}&:=\bar b_{j}(t_m-t_{j}),
        \qquad
        \hat v_{j}:=\bar v_{j}(t_m-t_{j})=\bigl[(t_m-t_{j})/\tau_m\bigr]^{1-\alpha}+\hat b_{j}.
        \label{hat_b_v}
    \end{align}

It is useful to immediately compare $\hat b_{m-2}$ and $\hat b_{m-3}$.
For this, we first recall that  the function $I_{\alpha+1}(\cdot)$ of \eqref{I_beta_def} is increasing,
while its argument
$\tau_{j+1}/t$
in the definition \eqref{bar_b_def} of $\bar b_j(t)$
becomes $\tau_{m-1}/(t_m-t_{m-2})$
in $\hat b_{m-2}$ and
and a smaller value $\tau_{m-2}/(t_m-t_{m-3})$ in $\hat b_{m-3}$
(since $\tau_{m-2}\le \tau_{m-1}$, by \eqref{rho_cond}, and $t_m-t_{m-3}>t_m-t_{m-2}$).
Additionally, note that the ratio of the ingredients $\tau_{j+1}/(\tau_j+\tau_{j+1})$ in $\hat b_{m-2}$ and $\hat b_{m-3}$ (respectively with $j=m-2$ and $j=m-3$)
is  $\ge\frac{2+\eps}{2(1+\eps)}$, in view of the final relationship in \eqref{rho_cond}.
Hence,
    \beq\label{b_hat_23}
        \frac{\hat b_{m-2}}{\hat b_{m-3}}
        >
        \Bigl(\frac{t_m-t_{m-3}}{t_m-t_{m-2}}\Bigr)^{\alpha+1}
        \cdot\Bigl(\frac{\tau_{m-1}}{\tau_{m-2}}\Bigr)^{2}\cdot
        \frac{2+\eps}{2(1+\eps)}
        \ge
        \Bigl(1+\frac{1}{(1+\eps)(2+\eps)}\Bigr)\cdot
        1\cdot \frac{2+\eps}{2(1+\eps)}
        \ge 1,
    \eeq
if $\eps$ is sufficiently small (to be more precise, $\eps\le \frac12(\sqrt{5}-1)\approx 0.618$ suffices for this bound).
Note that in \eqref{b_hat_23} we also used
 $[(t_m-t_{m-3})/(t_m-t_{m-2})]^{\alpha+1}\ge (t_m-t_{m-3})/(t_m-t_{m-2})=1+\tau_{m-2}/(\tau_{m-1}+\tau_m)$,
where $\tau_{m-2}\ge \tau_{m-1}/(1+\eps)$, while $\tau_{m-1}/(\tau_{m-1}+\tau_m)\ge 1/(2+\eps)$, both by~\eqref{rho_cond}.
Note also that while  \eqref{b_hat_23} is obtained $\forall\,m>3$,
it also remains valid for $m=3$
(as then $\hat b_{m-3}=\hat b_{0}=0$).

Next, we  evaluate the quantities $\A_m$, $\B_m$, and $\D_m$  (defined in \eqref{AB_def}), which appear in~\eqref{ABD_cond}.
At this stage, it is useful to recall the definitions of $\hat v_j$ and $\hat b_j$ in \eqref{hat_b_v}.
For $\A_m$, note that $\phi^m(t)=1-\psi^m(t)$ for $t\le t_m$ (see Section~\ref{ssec_prelim}), so  \eqref{nu_def} for $\delta_t^\alpha\psi^m(t_m)$
(valid in view of Remark~\ref{rem_phi_m}),
  multiplied by $\tau_m^\alpha\,(1-\alpha)$,
yields
    \begin{align}\label{AB_def_aa}
        \A_m&=
        -\tau_{m}^{\alpha}\,\Gamma(2-\alpha)\,
         \delta_t^\alpha \psi^m(t_m)
         =\hat v_{m-1}-0=1+\hat b_{m-1}>0\quad\forall\,m\ge1,
         \\
\intertext{where we used $\hat v_{m}=\bar v_{m}(0)=0$.
This includes the case $\A_1=\hat v_{0}=1$.
Similarly, for $\B_m$, we multiply \eqref{delta_phi_nu_a}
for $\delta_t^\alpha\phi^{m-1}(t_m)$ (valid in view of Remark~\ref{rem_phi_m})
by $\tau_m^\alpha\,(1-\alpha)$, and then use  $\tau_m/\tau_m=1$ and $\tau_m/\tau_{m-1}=\rho_m$ to get}
        \notag
        -\B_m&=
        1\cdot \bigl [0-\hat v_{m-1}\bigr]-\rho_m\,\bigl [\hat v_{m-1}-\hat v_{m-2}\bigr]\quad\forall\,m\ge2.
\intertext{Recalling that $\beta=1-2^{-\alpha}$, one now gets}
        \notag
        \B_m-\beta \A_m &=
        (2^{-\alpha}+\rho_m)\,\hat v_{m-1}-\rho_m\,\hat v_{m-2}
        \\
        \notag
        &=
        (2^{-\alpha}+\rho_m)\,\bigl[1+\hat b_{m-1}\bigr]-\rho_m\,\bigl[\bigl(1+\rho_m^{-1}\bigr)^{1-\alpha}+\hat b_{m-2}\bigr],
        \\
\intertext{where $(t_m-t_j)/\tau_m$ in $\hat v_j$ became 1 for $j=m-1$ and $1+\rho_m^{-1}$ for $j=m-2$.
Hence, by  rearranging a few terms, one gets}
         \label{AbetaB}
        \B_m-\beta \A_m  &=
        g(\alpha)
        +(2^{-\alpha}+\rho_m)\,\hat b_{m-1}
        -\rho_m\,\hat b_{m-2}\quad\forall\,m\ge2,
        \\
\intertext{where}
        \label{g_def}
        g(\alpha)
        &:=2^{-\alpha}+\rho_m-\rho_m\,\bigl(1+\rho_m^{-1}\bigr)^{1-\alpha},\qquad
        {\textstyle\frac{1}{2}}\alpha\le g(\alpha)\le {\textstyle\frac{1}{2}}\;\;\forall\alpha\in(0,1).
    \end{align}
Here the lower bound $g(\alpha)\ge\frac12\alpha$ follows from $g(0)=0$ and $g(1)=\frac12$ combined with $g(\alpha)$ being a concave function of $\alpha$ for any fixed $\rho_m\in[1,2]$.
 For concavity, it suffices to check that $g''(\alpha)\le g''(0)\le 0$ for this range of $\rho_m$; this corresponds to $\eps\le1$ in \eqref{rho_cond}.
 A similar argument (using $g'(\alpha)\ge g'(0)\ge 0$) shows that $g(\alpha)$ is increasing, so $g(\alpha)\le g(1)=\frac12$.

 For $\D_m$ we use \eqref{delta_phi_nu_a}, with $j=m-2\le 1$,
  multiplied by $\tau_m^\alpha\,(1-\alpha)$, which yields
    \begin{align}
        \notag
        \D_m&=
        \tau_m\,\left(\frac{\hat v_{m-1}-\hat v_{m-2}}{\tau_{m-1}}
        -\frac{\hat v_{m-2}-\hat v_{m-3}}{\tau_{m-2}}\right),
         \\
\intertext{and then}
         \notag
        \D_m&=
        \tau_{m}^{\alpha}\,\left(\frac{(t_m-t_{m-1})^{1-\alpha}-(t_m-t_{m-2})^{1-\alpha}}{\tau_{m-1}}
        -\frac{(t_m-t_{m-2})^{1-\alpha}-(t_m-t_{m-3})^{1-\alpha}}{\tau_{m-2}}\right)
        \\ \notag
        &{}+
        \tau_m\,\left(\frac{\hat b_{m-1}-\hat b_{m-2}}{\tau_{m-1}}
        -\frac{\hat b_{m-2}-\hat b_{m-3}}{\tau_{m-2}}\right).
        \\
\intertext{From this, using \eqref{b_hat_23} and also $\frac{\dd ^2}{\dd t^2}(t_m-t)^{1-\alpha}=-\alpha(1-\alpha)\,(t_m-t)^{-\alpha-1}$, one gets the bound}
        \label{D_bound}
        \D_m&\le-\tau_{m}^{\alpha}\,\frac{\tau_{m-2}+\tau_{m-1}}{2}\alpha(1-\alpha)\,(t_m-t_{m-2})^{-\alpha-1}
        +
        \rho_m\,\bigl[\hat b_{m-1}-\hat b_{m-2}\bigr]\quad\forall\,m\ge 3.
        %
    \end{align}
    Note that here, when bounding the central difference approximation of $\frac{\dd ^2}{\dd t^2}(t_m-t)^{1-\alpha}$ about $t=t_{m-2}$, we only
 used the principal term, while the remainder of the associated Taylor series, being negative, is dropped (in view of $\frac{\dd ^k}{\dd t^k}(t_m-t)^{1-\alpha}<0$ for $k\ge 1$).

Now that we have evaluated
$\A_m$, $\B_m$, and $\D_m$, we proceed to checking that both conditions in \eqref{ABD_cond} are satisfied.
First, we shall establish these conditions
for the case of uniform temporal grid, i.e. $\tau_m:=\tau=1/M$.
For this case $\rho_m=\rho_{m-1}=1$, so if $m\ge 3$, the above findings reduce to
    \begin{align*}
        \A_m&= \bar v(\tau),
        \\
        \B_m&= 2\bar v(\tau)- \bar v(2\tau),
        \\
        \D_m&= \bar v(\tau)-2 \bar v(2\tau)+\bar v(3\tau),
    \end{align*}
where, for $k=1,2,3$,
    \beq\label{c_k_def}
        \bar v(k\tau)=k^{1-\alpha}+c_k(\alpha),\quad
        c_k(\alpha):=\frac{\alpha(1-\alpha)}{2\,k^{\alpha+1}}\,I_{\alpha+1}(1/k),
        \quad
        c_1(\alpha)=\frac{\alpha}{2(2-\alpha)}
        \le \alpha\,2^{\alpha-2}.
    \eeq
Here the definition of $\bar v(\cdot)$ is the same as of $\bar v_j(\cdot)$ in \eqref{bar_v_def} (and the latter is now independent of $j$),
while $\bar v(k\tau)=\hat v_{m-k}$ and $\hat b_{m-k}=c_k(\alpha)$ are also independent of $m$ (compare with \eqref{hat_b_v}).
Note also that $c_1(\alpha)$ was evaluated using the definition of $I_{\alpha+1}$ in \eqref{I_beta_def}, and then evaluated using $2-\alpha\ge 2^{1-\alpha}$.
Thus, \eqref{AbetaB} and \eqref{D_bound} simplify to
    \begin{align}
        \label{AbetaB_uni}
        \B_m-\beta\,\A_m&= g(\alpha)+(2^{-\alpha}+1)\,c_1(\alpha)-c_2(\alpha),
        \\
        \intertext{where now $g(\alpha)=1-2^{-\alpha}$,}
        \notag
        \D_m&\le -\alpha(1-\alpha)\,2^{-\alpha-1}+c_1(\alpha)-c_2(\alpha).
    \end{align}
The second desired condition $\B_m-\beta\,\A_m>0$ immediately follows from $c_2(\alpha)<c_1(\alpha)$ (for the latter, recall that the function $I_{\alpha+1}(\cdot)$ is increasing).
For the third desired condition, when comparing $\D_m$ and $\beta(\B_m-\beta\,\A_m)$, note that $c_2(\alpha)>0$ and $\beta\in(0,1)$, so it suffices to prove
the following stronger bound (in which $c_2(\alpha)$ is eliminated):
    \beq\label{b_suff_uni}
        -\alpha(1-\alpha)\,2^{-\alpha-1}+c_1(\alpha)\le \beta\bigl[g(\alpha)+(2^{-\alpha}+1)\,c_1(\alpha)\bigr].
    \eeq
Note  that here  $g(\alpha)=1-2^{-\alpha}\ge\frac12\alpha$ (by \eqref{g_def}), while $\beta(2^{-\alpha}+1)=1-4^{-\alpha}$, so
it suffices to show that
    \beq\label{non_pert}
        -\alpha(1-\alpha)\,2^{-\alpha-1}
        +4^{-\alpha}c_1(\alpha)\le \beta\cdot{\textstyle\frac12}\,\alpha.
        \eeq
        On multiplication by $2^{\alpha+1}\alpha^{-1}$, noting that $\beta\,2^\alpha=2^\alpha-1$, one now gets
        $$
        -(1-\alpha)
        +2^{1-\alpha}\alpha^{-1}c_1(\alpha)\le 2^\alpha-1.
        $$
        Finally, recalling that $c_1(\alpha)
        \le \alpha\,2^{\alpha-2}$, we conclude that the second condition in \eqref{ABD_cond} for any $m\ge 3$ follows from
        \beq\label{non_pert1}
        \alpha
        +{\textstyle\frac12}\le 2^\alpha,
    \eeq
which  is true $\forall\,\alpha\in(0,1)$ (in fact, $2^{\alpha}-\alpha-\frac12> 0.4$ on this interval).

For $m=1$ one has $\A_1=1>0$. For $m=2$ there is no change in $\A_2= 1+c_1(\alpha)>0$,
while \eqref{AbetaB}
includes $\hat b_{m-2}=\hat b_{0}=0$,
which results in a version of
 \eqref{AbetaB_uni}
 for $m=2$ with $c_2(\alpha)$ replaced by $0$.
 Thus $\B_2-\beta\A_2=g(\alpha)+(2^{-\alpha}+1)\,c_1(\alpha)
 >0$.

Along the lines of the above estimation, we now establish \eqref{ABD_cond} in a more intricate case of the temporal mesh under condition \eqref{rho_cond}.
For any $m\ge 3$, recall \eqref{AbetaB} and \eqref{D_bound}. Now, the first bound in \eqref{ABD_cond} follows from $\hat b_{m-1}\ge\hat b_{m-2}$, which
is obtained similarly to \eqref{b_hat_23}. For the second bound in \eqref{ABD_cond}, it suffices to check that
    \begin{align*}
        -\tau_{m}^{\alpha}\,\frac{\tau_{m-2}+\tau_{m-1}}{2}\alpha(1-\alpha)\,(t_m-t_{m-2})^{-\alpha-1}
        &{}+
        \rho_m\,\bigl[\hat b_{m-1}-\hat b_{m-2}\bigr]
        \\&
           \le
        \beta\Bigl[
        g(\alpha)
          +(2^{-\alpha}+\rho_m)\,\hat b_{m-1}
          -\rho_m\,\hat b_{m-2}
        \Bigr].
    \end{align*}
Note that $\hat b_{m-2}>0$ and $\beta\in(0,1)$, so it suffices to prove
the following stronger bound (in which $\hat b_{m-2}$ is eliminated, and which is a version of \eqref{b_suff_uni} for a non-uniform mesh):
    $$
        -\tau_{m}^{\alpha}\,\frac{\tau_{m-2}+\tau_{m-1}}{2}\alpha(1-\alpha)\,(t_m-t_{m-2})^{-\alpha-1}
        +
        \rho_m\,\hat b_{m-1}\le
        \beta\bigl[
        g(\alpha)
          +(2^{-\alpha}+\rho_m)\,\hat b_{m-1}
        \bigr].
    $$
Recall that $g(\alpha)\ge {\textstyle\frac{1}{2}}\alpha$, in view of \eqref{g_def},
and note that  $\rho_m-\beta(2^{-\alpha}+\rho_m)=2^{-\alpha}(\rho_m-\beta)\le \eps\,2^{-\alpha}+4^{-\alpha}$
(by \eqref{rho_cond}). Hence, it suffices to prove that
    \beq\label{pert}
        -\alpha(1-\alpha)\,(1-\eps)\,2^{-\alpha-1}
        +
        \bigl(\eps\,2^{-\alpha}+4^{-\alpha}\bigr)\,\hat b_{m-1}
        \le
        \beta\cdot{\textstyle\frac12}\,\alpha.
    \eeq
Here we also used $\tau_{m}^{\alpha}\,(t_m-t_{m-2})^{-\alpha}\ge 2^{-\alpha}$
(due to $\tau_m\ge \tau_{m-1}$) combined with
$(1+\eps)(\tau_{m-2}+\tau_{m-1})\ge \tau_{m-1}+\tau_{m}=t_m-t_{m-2}$ and $(1+\eps)^{-1}\ge1-\eps$.
We now recall \eqref{hat_b_v}, which yields  $\hat b_{m-1}=\bar b_{m-1}(\tau_m)$, which, combined with \eqref{bar_b_def} and compared with $c_1(\alpha)$,
yields
    \beq\label{b_bound}
        \hat b_{m-1}=\frac{2\tau_m}{\tau_{m-1}+\tau_m}\,c_1(\alpha)\le (1+\eps)\,c_1(\alpha)\qquad \forall\,m\ge2.
    \eeq
Thus, replacing $\hat b_{m-1}$ in \eqref{pert} by $(1+\eps)\,c_1(\alpha)$ yields a stronger condition,
(which is a perturbed version of \eqref{non_pert}, and becomes identical with \eqref{non_pert} if $\eps=0$).
Multiplying the latter by $2^{\alpha+1}\alpha^{-1}$, and  noting that $\beta\,2^\alpha=2^\alpha-1$, one gets
    $$
        -(1-\alpha)\,(1-\eps)
        +
        (1+\eps)\,\bigl(2\eps+2^{1-\alpha}\bigr)\,\alpha^{-1}c_1(\alpha)
        \le
        2^\alpha-1.
    $$
Here
$-(1-\alpha)\,(1-\eps)\le -1+\alpha+\eps$, while
 $\alpha^{-1}c_1(\alpha)\le 2^{\alpha-2}$, so it suffices to check that
    $$
    \alpha + \eps + (1+\eps)\,\bigl(\eps+{\textstyle\frac12}\bigr) \le    2^\alpha.
    $$
The latter is a perturbed version of \eqref{non_pert1}, and is clearly satisfied if $\eps$ is sufficiently small.
Thus, we have established \eqref{ABD_cond} for $m\ge3$.

Finally, for $m=1,2$ one gets $\A_1=1>0$ and  $\A_2= 1+\hat b_{1}>0$, while
\eqref{AbetaB} for $m=2$   includes $\hat b_{m-2}=\hat b_{0}=0$, so
$\B_2 -\beta\A_2\le \A_2= g(\alpha)
  +(2^{-\alpha}+\rho_2)\,\hat b_{1}>0$.
Thus, we have established \eqref{ABD_cond}.
\end{proof}

\subsection{Stronger version of \eqref{eq:newcondition1}}

The error analysis in the parabolic case will require a stronger version
of  \eqref{eq:newcondition1}, or, equivalently, a stronger version of the second
bound in \eqref{ABD_cond} of Lemma~\ref{lem_cond_ab}, which we now establish.

\begin{lemma}[stronger version of \eqref{eq:newcondition1}]\label{lem_cond_aaa}
There exists a sufficiently small $\eps$, independent of $\alpha$, such that
if the temporal grid satisfies \eqref{rho_cond} and $\beta=1-2^{-\alpha}$, then
    \begin{align}
        \label{AB_cond_stronger}
        \frac{\B_m-\beta\A_m}{\A_m}&\ge(1+\eps)\beta\qquad\forall\, m\ge2,
        \\
        \label{AB_cond_stronger2}
        \beta^2(\tau_m/\tau_{m-1})^{\alpha}\A_m\A_{m-1}&\le (\B_m-\beta\A_m)^2\qquad\forall\, m\ge2.
    \end{align}
\end{lemma}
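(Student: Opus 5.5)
The plan is to reuse the explicit formulas from the proof of Lemma~\ref{lem_cond_ab}: $\A_m=1+\hat b_{m-1}$ from \eqref{AB_def_aa}, and $\B_m-\beta\A_m=g(\alpha)+(2^{-\alpha}+\rho_m)\hat b_{m-1}-\rho_m\hat b_{m-2}$ from \eqref{AbetaB}. These are combined with the bounds $\hat b_{m-1}\le(1+\eps)c_1(\alpha)\le(1+\eps)\alpha 2^{\alpha-2}$ from \eqref{b_bound} and \eqref{c_k_def}, and $g(\alpha)\ge\frac12\alpha$ from \eqref{g_def}. The key point is that $\beta=1-2^{-\alpha}\le \alpha\ln 2<0.7\alpha$, so $\beta$ and $g(\alpha)$ are both of order $\alpha$. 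Their ratio must be bounded away from the target by a margin that survives an $O(\eps)$ perturbation uniformly in $\alpha\in(0,1)$. As in Lemma~\ref{lem_cond_ab}, I would first treat the uniform mesh ($\eps=0$) to identify the margin, and then perturb.

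For \eqref{AB_cond_stronger}, the required inequality is $\B_m-\beta\A_m-(1+\eps)\beta\A_m\ge0$. I would handle the $\hat b_{m-2}$ term using $\hat b_{m-2}\le\hat b_{m-1}$, which was noted in the proof of Lemma~\ref{lem_cond_ab}; for $m=2$ one simply has $\hat b_0=0$. This gives
\[
\B_m-\beta\A_m\ge g(\alpha)+2^{-\alpha}\hat b_{m-1}\ge g(\alpha)\ge\tfrac12\alpha .
\]
The right-hand side satisfies $(1+\eps)\beta(1+\hat b_{m-1})\le(1+\eps)\beta\bigl(1+(1+\eps)\alpha2^{\alpha-2}\bigr)$. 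So it suffices to prove the scalar inequality
\[
\tfrac12\alpha\ge(1+\eps)(1-2^{-\alpha})\bigl(1+(1+\eps)\alpha2^{\alpha-2}\bigr)\qquad\forall\,\alpha\in(0,1).
\]
At $\eps=0$ this reads $\frac12\alpha\ge(1-2^{-\alpha})(1+\alpha2^{\alpha-2})$. As $\alpha\to0$ the right-hand side is about $0.69\alpha$, so this crude inequality fails.

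I therefore need to keep more of $g$. Using the exact uniform-mesh values $g=1-2^{-\alpha}=\beta$ and $\rho_m=1$, the target becomes
\[
\beta+(2^{-\alpha}+1)c_1-c_2\ge(1+\eps)\beta(1+c_1).
\]
This follows if $(2^{-\alpha}+1-\beta)c_1-c_2=2^{1-\alpha}c_1-c_2\ge \eps\beta(1+c_1)$. Both $c_1$ and $c_2$ are $O(\alpha)$, and $2^{1-\alpha}c_1-c_2\ge c_1-c_2>0$ is a margin of order $\alpha$. Specifically, $c_1\approx\alpha/4$, while $c_2\approx \alpha\, I_1(1/2)/8$ with $I_1(1/2)<1/3$. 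Hence the margin is at least about $\alpha/5$, which beats $\eps\beta\lesssim0.7\eps\alpha$ for small $\eps$.

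For the nonuniform mesh, I would expand $g(\alpha)$ for $\rho_m\in[1,1+\eps]$ as $g=\beta+O(\eps\alpha)$. This holds because $\partial_\rho g$ vanishes at $\alpha=0$ and is smooth, so it is $O(\alpha)$. The dependence of $\hat b_{m-1}$ and $\hat b_{m-2}$ on the mesh ratios is also $O(\eps\alpha)$, by \eqref{b_bound} and \eqref{I_beta_diff}. All these perturbations are $O(\eps\alpha)$, and so is the margin lost, so the $\alpha/5$ margin absorbs them for $\eps$ small independent of $\alpha$. Getting this uniform-in-$\alpha$ bookkeeping of $g(\alpha)-\beta$ right is the main obstacle. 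The cleanest route is to bound $|\partial_\rho g|\le C\alpha$ on $\rho\in[1,2]$ by differentiating.

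For \eqref{AB_cond_stronger2}, I would reduce it to \eqref{AB_cond_stronger}. The latter gives $(\B_m-\beta\A_m)^2\ge(1+\eps)^2\beta^2\A_m^2$. It then suffices to show $(\tau_m/\tau_{m-1})^\alpha\A_{m-1}\le(1+\eps)^2\A_m$. Since $\rho_m^\alpha\le1+\eps$, it is enough to have $\A_{m-1}\le(1+\eps)\A_m$, i.e. $1+\hat b_{m-2}\le(1+\eps)(1+\hat b_{m-1})$. This holds because $\hat b_{m-2}\le(1+\eps)c_1$ and $\hat b_{m-1}\ge c_1$, the latter from \eqref{b_bound} using $\tau_m\ge\tau_{m-1}$. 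Here $\hat b_{m-2}$ refers to the normalization of $\A_{m-1}$ with $\tau_{m-1}$, and by the analogous formula it equals $\frac{2\tau_{m-1}}{\tau_{m-2}+\tau_{m-1}}c_1\le(1+\eps)c_1$. For $m=2$, $\A_1=1\le\A_2$ directly.
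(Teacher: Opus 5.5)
Your proposal is correct and follows the paper's proof. You reduce \eqref{AB_cond_stronger2} to \eqref{AB_cond_stronger} exactly as the paper does, via $\rho_m^\alpha\le 1+\eps$ and $\A_{m-1}\le(1+\eps)\A_m$ from \eqref{b_bound}. You then obtain \eqref{AB_cond_stronger}, i.e.\ $\B_m-2\beta\A_m\ge\eps\beta\A_m$, by exhibiting an order-$\alpha$ margin on the uniform mesh and absorbing the $O(\eps\alpha)$ perturbations.

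The differences are only in bookkeeping. You use absolute bounds: $c_1\ge\alpha/4$, $c_2\le\alpha/12$, $\beta\le\alpha\ln 2$, and a mean-value bound $|\pt_\rho g|\le C\alpha$. One small slip: $c_2\approx\alpha I_1(\frac12)/4$, not $/8$, so the margin is $\alpha/6$ rather than $\alpha/5$, which is harmless. The paper instead measures everything relative to $\beta$, via $c_2/c_1\le 2^{-\alpha-1}$, $3c_1\ge 2^\alpha\beta$, $\hat b_{m-2}\le 2^{-\alpha-1}(1+\eps)^3\hat b_{m-1}$, and the explicit tangent-line estimate $\beta-g(\alpha)\le\frac12\eps(1+3\eps)\beta$; this makes the admissible $\eps$ explicit.
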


\begin{proof}
First, note that \eqref{AB_cond_stronger} implies \eqref{AB_cond_stronger2}.
Indeed, assuming that \eqref{AB_cond_stronger} is true, one gets
$(\B_m-\beta\A_m)^2\ge (1+\eps)^2\beta^2\A_m^2$, so
it suffices to check that $(\tau_m/\tau_{m-1})^{\alpha}\A_{m-1}/\A_m\le (1+\eps)^2$.
In the latter,
 $\tau_m/\tau_{m-1}\le 1+\eps$, by \eqref{rho_cond}, while $\A_{m-1}/\A_m\le 1+\eps$,
since, by \eqref{AB_def_aa}, $\A_m=1+\hat b_{m-1}$, so
\eqref{b_bound} yields
$\A_m=1+\frac{2\tau_m}{\tau_{m-1}+\tau_m}\,c_1(\alpha)\ge 1+c_1(\alpha)$
and
$\A_{m-1}=1+\frac{2\tau_{m-1}}{\tau_{m-2}+\tau_{m-1}}\,c_1(\alpha)\le 1+(1+\eps)c_1(\alpha)$, with the exception of $\A_{1}=1$.
So we, indeed, get \eqref{AB_cond_stronger2}.


To prove the remaining bound \eqref{AB_cond_stronger}, it suffice to show that $\B_m-2\beta\A_m\ge \eps\beta\A_m$, for which we shall employ
a few bounds from Lemma~\ref{lem_cond_ab}.
We start by checking \eqref{AB_cond_stronger} for the uniform grid, which corresponds to $\eps=0$ in \eqref{rho_cond}.
Recalling that $\A_m=1+c_1(\alpha)$, one gets a version of \eqref{AbetaB_uni} $\forall\,m\ge3$
(while for $m=2$ one should replace $c_2(\alpha)$ by $0$, so the analysis below also works):
    \begin{align*}
        \B_m-2\beta\A_m&
        = \bigl[g(\alpha)-\beta\bigr]
        +\bigl[2^{-\alpha}+1-\beta\bigr]\,c_1(\alpha)-c_2(\alpha)\\
        &
        \ge
        \bigl[2^{1-\alpha}-2^{-\alpha-1}\bigr]\,c_1(\alpha)= 2^{-\alpha-1}\,3c_1(\alpha).
    \end{align*}
Here we used $g(\alpha)=\beta=1-2^{-\alpha}$, so $[2^{-\alpha}+1-\beta]=2^{1-\alpha}$,
and we also used
$c_2(\alpha)/c_1(\alpha)\le  2^{-\alpha-1}$
(the latter follows from \eqref{c_k_def}, where $I_{\alpha+1}(\cdot)$ is increasing,
so $I_{\alpha+1}(1/2)<I_{\alpha+1}(1)$).
Finally, note that
 $3 c_1(\alpha)=\frac{3\alpha}{2(2-\alpha)}\ge 2^\alpha-1=2^\alpha \beta$, while
$\A_m=1+c_1(\alpha)\le \frac32$, so
    \beq\label{aux_ext_lem_uni}
        \B_m-2\beta\A_m\ge {\textstyle\frac12}\beta\ge  {\textstyle\frac13}\beta\A_m.
    \eeq
Thus, for the uniform temporal grid, we have obtained a version of \eqref{AB_cond_stronger}  with $\eps$ replaced by $\frac13$.

Now we shall imitate the above evaluation for a general grid under condition \eqref{rho_cond},
for which we get a version of \eqref{AbetaB} (in view of $\A_m=1+\hat b_{m-1}$, by \eqref{AB_def_aa}):
    \begin{align}\label{aux_ext_lem}
        \B_m- 2\beta\A_m  &=
        \bigl[g(\alpha)-\beta\bigr]
        +\bigl[2^{-\alpha}+\rho_m-\beta\bigr]\,\hat b_{m-1}
        -\rho_m\,\hat b_{m-2}.
    \end{align}
For $m=2$, one has $\hat b_{m-2}=0$, while for $m\ge 3$
 we need a bound on  $\frac{\hat b_{m-1}}{\hat b_{m-2}}$, which is obtained similarly to
the bound on
$\frac{\hat b_{m-2}}{\hat b_{m-3}}$
in
\eqref{b_hat_23}, only instead of  $(t_m-t_{m-3})/(t_m-t_{m-2})$ we now use the bound on
$(t_m-t_{m-2})/(t_m-t_{m-1})=(\tau_m+\tau_{m-1})/\tau_m\ge(2+\eps)/(1+\eps)\ge 2/(1+\eps)$:
      $$
         \frac{\hat b_{m-1}}{\hat b_{m-2}}
        >
        \Bigl(\frac{t_m-t_{m-2}}{t_m-t_{m-1}}\Bigr)^{\alpha+1}
        \cdot\Bigl(\frac{\tau_{m}}{\tau_{m-1}}\Bigr)^{2}\cdot
        \frac{2+\eps}{2(1+\eps)}
        \ge
        \Bigl(\frac{2}{1+\eps}\Bigr)^{\alpha+1}\cdot
        1\cdot\frac{1}{1+\eps}\ge \frac{2^{\alpha+1}}{(1+\eps)^3}.
    $$
Thus
 $\hat b_{m-2}\le  2^{-\alpha-1}(1+\eps)^3\,\hat b_{m-1}$ $\forall\,m\ge 2$, and, if $\eps$ is sufficiently small, we also enjoy $\hat b_{m-1}\ge \hat b_{m-2}$,
which yield
      $$
          \rho_m(\hat b_{m-1}- \hat b_{m-2})\ge \hat b_{m-1}- \hat b_{m-2}
         \ge \bigl[1-2^{-\alpha-1}(1+\eps)^3\bigr]\hat b_{m-1}.
     $$
 Combining this with \eqref{aux_ext_lem} and  $2^{-\alpha}-\beta=2^{1-\alpha}-1$, we arrive at
    \begin{align}\notag
           \B_m- 2\beta\A_m  &\ge
        \bigl[g(\alpha)-\beta\bigr]
        +2^{-\alpha-1}\bigl[4-(1+\eps)^3\bigr]\,\hat b_{m-1}
        \\\label{aux_ext_lem1}
          &\ge
        \bigl[g(\alpha)-\beta\bigr]
        +{\textstyle\frac12}\bigl[1-\eps-\eps^2-{\textstyle\frac13}\eps^3\bigr]\,\beta.
    \end{align}
Here in the second line we also used $\hat b_{m-1}\ge c_1(\alpha)$ (in view of \eqref{b_bound})
and $3 c_1(\alpha)\ge2^\alpha \beta$ (already used for the uniform grid).
Clearly, \eqref{aux_ext_lem1} is a perturbed version of \eqref{aux_ext_lem_uni}, in which, compared to the uniform grid case, the term
$g(\alpha)-\beta\le 0$ does not vanish, so  it has to be estimated.
The lowest bound is obtained by replacing $\rho_m$ by its largest value $1+\eps$ in the definition \eqref{g_def} of $g(\alpha)$, which yields
    \begin{align*}
        \beta-g(\alpha)&=
        \beta-2^{-\alpha}-(1+\eps)+(1+\eps)^\alpha\cdot(2+\eps)^{1-\alpha}\\
        &\le -2^{1-\alpha}-\eps +[1+\alpha \eps]
        \cdot 2^{-\alpha}\,[2+(1-\alpha)\eps]
        \\
        &\le
        \eps\,2^{-\alpha}\,\bigl[1+\alpha-2^{\alpha}+\alpha\eps\bigr],
\intertext{where we used $(1+\eps)^\alpha\le [1+\alpha \eps]$
(since $(1+\eps)^\alpha$, being a concave function of $\eps$,
is below its tangent line $1+\alpha \eps$) and a similar formula
$(2+\eps)^{1-\alpha}\leq 2^{1-\alpha}[1+(1-\alpha)\frac12\eps]=
2^{-\alpha}[2+(1-\alpha)\eps]$.
Finally, note that $0\le 1+\alpha-2^{\alpha}\le \frac12 2^\alpha\beta$ and
$ \alpha \le \frac32(2^\alpha-1)= \frac32 2^\alpha\beta$,
so }
        \beta-g(\alpha) &\le {\textstyle\frac12}\eps\,(1+3\eps)\,\beta.
    \end{align*}
Combining this with \eqref{aux_ext_lem1} yields
    \begin{align*}
        \B_m- 2\beta\A_m
        &\ge
        {\textstyle\frac12}\bigl[1-\eps-\eps^2-{\textstyle\frac13}\eps^3-\eps\,(1+3\eps)\bigr]\,\beta,
    \end{align*}
in which it remains to choose a sufficiently small $\eps$ such that $\frac12[\cdots]\ge\eps$, which completes the proof of \eqref{AB_cond_stronger}.
\end{proof}

\begin{remark}\label{rem_kappa}
The main result of Lemma~\ref{lem_cond_aaa},
\eqref{AB_cond_stronger2}, is equivalent to
$\beta^2\,\kappa_{m-1,m-1}\, \kappa_{m,m}\le |\kappa_{m,m-1}|^2$.
This becomes obvious
once we recall that the left-hand sides in \eqref{eq:newcondition0} and \eqref{eq:newcondition1}
are, respectively, $\kappa_{m,m}$ and $\kappa_{m,m-1}$, and
 the definitions of $\A_m$ and $\B_m$ in, respectively, \eqref{AB_def_a} and \eqref{AB_def_b}.
\end{remark}

\section{Stability properties for the discrete fractional-derivative operator}\label{sec_stability}

Theorem~\ref{theo_beta_repr} immediately implies inverse monotonicity of the discrete operator $\delta_t^\alpha$ in the sense that  if $U^0=0$, the corresponding matrix
(being a product of two M-matrices, as elaborated in \citep[Remark~2.1]{NataliaL2}) is inverse monotone, i.e. all elements of the inverse of this matrix
are non-negative. Thus, Theorem~\ref{theo_beta_repr} immediately implies the following result (see also a similar \citep[Theorem~3.1]{NataliaL2}).

\begin{corollary}[discrete comparison principle]\label{th-discretecomparisonprinciple}
There exists a sufficiently small positive constant $\eps$, independent of $\alpha$, such that,
if the temporal grid satisfies \eqref{rho_cond} $\forall\, j\ge 1$,  then the following statements are true:
\smallskip

(i) If $U^{0}= 0$ and $\delta_{t}^{\alpha}U^{m}\geq 0$ $\forall\, m\geq 1$, then $U^{j}\geq0$ for all $j\geq 0$.
\smallskip

(ii) If for a certain barrier function $\{W^{j}\}_{j=0}^{M}$ one has $|U^{0}|= W^{0}$ and $|\delta_{t}^{\alpha}U^{m}|\leq \delta_{t}^{\alpha}W^{m}$ $\forall\, m\geq 1$, then $|U^{j}|\leq W^{j}$ for all $j\geq 0$.
\end{corollary}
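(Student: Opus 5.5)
The plan is to derive both statements from the factorisation of $\delta_t^\alpha$ given by Theorem~\ref{theo_beta_repr}. With $\eps$ chosen as in that theorem, the representation \eqref{eq:newrepresentation} satisfies \eqref{eq:condition_kappa}. I would write this in matrix form for the unknowns $(U^1,\dots,U^M)$, using $U^0=0$ so that $V^0=0$ and the $j=0$ terms drop out.

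Let $K$ be the lower-triangular matrix with entries $\kappa_{m,j}$, $1\le j\le m$. Let $L$ be the lower-bidiagonal matrix with diagonal entries $(1-\beta)^{-1}$ and subdiagonal entries $-\beta(1-\beta)^{-1}$, so that $V=LU$. Then the vector of values $\delta_t^\alpha U^m$ equals $KLU$.

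Both $K$ and $L$ are lower-triangular with positive diagonals and nonpositive off-diagonals. For $K$ this is exactly \eqref{eq:condition_kappa}; for $L$ it follows from $\beta\in[0,1)$. Hence each is an M-matrix, since a triangular Z-matrix with positive diagonal has a nonnegative inverse, which one sees from forward substitution or a Neumann series of the nilpotent part. Concretely:
\begin{itemize}
\item $L^{-1}$ has entries $(1-\beta)\beta^{m-j}\ge0$ for $j\le m$;
\item $K^{-1}\ge0$ follows by induction on $m$, since $V^m=\kappa_{m,m}^{-1}\bigl(F^m-\sum_{j<m}\kappa_{m,j}V^j\bigr)$ with $-\kappa_{m,j}\ge0$.
\end{itemize}
Thus $(KL)^{-1}=L^{-1}K^{-1}\ge0$ entrywise.

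For (i), suppose $U^0=0$ and $F^m:=\delta_t^\alpha U^m\ge0$. The induction just described gives $V^m\ge0$ for all $m$. Then $U^m=\beta U^{m-1}+(1-\beta)V^m\ge0$, again by induction from $U^0=0$.

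For (ii), apply (i) to $W^j\mp U^j$. At $j=0$ we have $W^0\mp U^0=|U^0|\mp U^0$, which may be nonzero. I would handle this by noting that $\delta_t^\alpha$ annihilates constants, because $\Pi$ reproduces constants and the Caputo derivative of a constant vanishes. Set $\tilde U^j:=W^j\mp U^j-(W^0\mp U^0)$. Then $\tilde U^0=0$ and $\delta_t^\alpha\tilde U^m=\delta_t^\alpha W^m\mp\delta_t^\alpha U^m\ge0$, so (i) gives $W^j\mp U^j\ge W^0\mp U^0\ge0$, i.e.\ $|U^j|\le W^j$.

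I expect no real obstacle, since everything rests on Theorem~\ref{theo_beta_repr}. The only points needing care are the treatment of the nonzero initial value in (ii) via invariance under constants, and checking that the $j=0$ coefficient $\kappa_{m,0}$ plays no role once $U^0=0$.
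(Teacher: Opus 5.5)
Your proposal is correct and follows the paper's argument: Theorem~\ref{theo_beta_repr} gives a factorisation of $\delta_t^\alpha$ (for $U^0=0$) into two lower-triangular M-matrices, so their product is inverse-monotone. Your reduction of (ii) to (i), subtracting the constant $W^0\mp U^0$ and using that $\delta_t^\alpha$ annihilates constants, is a valid way to handle a nonzero $U^0$, which the paper leaves implicit.
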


Throughout the remainder of the paper we shall assume that the temporal grid is quasi-graded in the following sense:
    \begin{align}\label{eq:meshstability}
        \tau_{1}\simeq M^{-r},\quad \tau_{j}\simeq \tau_{1}^{1/r}t_{j}^{1-1/r}
        \quad \forall j =1,\dots,M
     \end{align}

 \begin{remark}[graded versus quasi-graded grids]\label{rem_quasi}
 To better understand \eqref{eq:meshstability}, note that dividing the condition on $\tau_j=t_j-t_{j-1}$ by $t_j^{1-1/r}$,
 and then noting that $(t_{j-1}/t_j)^{1-1/r}\le 1$), yields
     $$
        t_j^{1/r}-t_{j-1}^{1/r}
        \le \frac{t_j-t_{j-1}}{t_j^{1-1/r}}
      \simeq \tau_{1}^{1/r} \simeq M^{-1}.
     $$
 The most obvious example of \eqref{eq:meshstability} is
 the standard graded grid $t_{j}=T(j/M)^{r}$. Additionally, a straightforward evaluation shows that
     $$
        1\le \frac{\tau_{j+1}}{\tau_{j}}=\frac{(j+1)^r-j^r}{j^r-(j-1)^r}
        =\frac{(1+1/j)^r-1}{1-(1-1/j)^r}\rightarrow 1^+\quad \mbox{as~}j\rightarrow \infty,
    $$
so for any $\eps>0$, one finds a sufficiently large $K=K(\eps, r)$ such that the graded grid satisfies \eqref{rho_cond} $\forall\,j\ge K$, and, hence, also Theorem~\ref{theo_beta_repr}
and Corollary~\ref{th-discretecomparisonprinciple}.
Furthermore,  one can construct a modified graded grid that satisfies both \eqref{eq:meshstability} and \eqref{rho_cond} $\forall\,j\ge 1$,
as described in \citep[Remark~3.4]{NataliaL2}.
\end{remark}

\begin{theorem}[stability]\label{th-euniformgrid}
Let the temporal mesh satisfy \eqref{eq:meshstability}  for some $r\geq 1$, and also \eqref{rho_cond}
with a sufficiently small $\eps$
$\forall\,j\ge K$ for some fixed $1\le K\lesssim 1$ (i.e. $K$ may be sufficiently large independently of $M$).
  Then,   given $\gamma\in\mathbb{R}$, the following is true for $\{U^{j}\}_{j=0}^{M}$:
     \begin{align}\label{eq:stability}
        \left.
        \begin{array}{c}
           |\delta_{t}^{\alpha}U^{m}| \lesssim (\tau_{1}/t_{m})^{\gamma+1} \;\\[2pt]
            \;\forall m\geq 1,\quad U^{0}=0
        \end{array}\!\!\!
        \right\}
         \;\;\Rightarrow\;\; |U^{m}| \lesssim \mathcal{U}^{m}(\tau_{1};\gamma):=\ell_{\gamma}\, \tau_{1}\,t_{m}^{\alpha-1} (\tau_{1}/t_{m})^{\min\{0,\gamma\}},
    \end{align}
where $\ell_{\gamma}=\ell_{\gamma}(t_{m}):=1+\ln(t_{m}/\tau_{1})$ for $\gamma=0$, and $\ell_{\gamma}:=1$ otherwise.
\end{theorem}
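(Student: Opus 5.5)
The plan is to use a barrier-function argument based on the comparison principle of Corollary~\ref{th-discretecomparisonprinciple}(ii). We need a barrier $\{W^j\}$ with $W^0=0$, $W^m\lesssim\mathcal U^m(\tau_1;\gamma)$ and $\delta_t^\alpha W^m\gtrsim(\tau_1/t_m)^{\gamma+1}$ for all $m\ge1$. Since \eqref{rho_cond} is only assumed for $j\ge K$, the first $K$ steps have to be handled separately. There are two natural routes: split off the initial layer $t\le t_K$, or work directly with the representation \eqref{eq:newrepresentation}. I would take the second route, rewriting $\delta_t^\alpha U^m=\sum_j\kappa_{m,j}V^j$ and inverting in two stages, first for $V$ and then for $U=$ the recursion $U^j=\beta U^{j-1}+(1-\beta)V^j$.

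\textbf{Step 1 (inverse monotonicity on the whole grid).} For $m\ge K+3$ the sign conditions \eqref{eq:newcondition} hold by Lemmas~\ref{lem_cond3} and~\ref{lem_cond_ab}. These lemmas only use local ratios $\rho_{j+1}$ near $j$, and for Lemma~\ref{lem_cond3} with $j<K$ one checks that $\delta_t^\alpha\phi^j(t_m)$ is still negative once $t_m-t_{j+1}$ is large compared with $\tau_j,\tau_{j+1}$. By \eqref{eq:meshstability} this fails only for finitely many index pairs $(j,m)$ with $m\lesssim K$. On that finite initial block, I would treat the $O(K)$ unknowns $U^1,\dots,U^{K'}$ with $K'\lesssim1$ directly. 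The matrix of $\delta_t^\alpha$ restricted to it is lower triangular with diagonal $\simeq\tau_m^{-\alpha}$, and, by \eqref{eq:meshstability}, all $\tau_j\simeq\tau_1$ there, so its scaled inverse is bounded. This gives $|U^j|\lesssim\tau_1^\alpha\simeq\mathcal U^j$ for $j\le K'$. The contribution of these values to $\delta_t^\alpha U^m$ for $m>K'$ is then moved to the right-hand side. It is bounded by $\tau_1^{\alpha}\cdot\tau_1\,t_m^{-\alpha-1}\lesssim(\tau_1/t_m)^{\alpha+1}$, which is no larger than the given data bound when $\gamma\le\alpha$. For $\gamma>\alpha$ it is still dominated by $\delta_t^\alpha$ of the barrier below, since the target bound is then $\tau_1t_m^{\alpha-1}$.

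\textbf{Step 2 (barrier).} Following \citep{NataliaL2,NataliaMeng}, take $W^m=\mathcal U^m$ itself, modified by a constant multiple near $t=0$, i.e.\ $W(t)=\tau_1 t^{\alpha-1}(\tau_1/t)^{\min\{0,\gamma\}}\ell_\gamma$ for $t\ge t_1$, or equivalently a combination of $t^{\alpha}$, $t^{\alpha-1-\gamma}\tau_1^{\gamma+1}$ and logarithmic variants. Using $\partial_t^\alpha t^{\alpha}=\Gamma(1+\alpha)$, and $\partial_t^\alpha$ of the truncated power $\min(t,\tau_1)^{\ldots}$-type functions, one obtains $\partial_t^\alpha W\gtrsim(\tau_1/t)^{\gamma+1}$. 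The truncation error $\delta_t^\alpha W-\partial_t^\alpha W$ of the backward L2 operator on the quasi-graded grid is bounded by $\epsilon'(\tau_1/t_m)^{\gamma+1}$ with small $\epsilon'$ after scaling the initial piece by a large constant. A cheaper alternative is to build the barrier at the level of $V$: find $\{\hat V^j\}\ge0$ with $\sum_j\kappa_{m,j}\hat V^j\ge(\tau_1/t_m)^{\gamma+1}$, using $\kappa_{m,m}\simeq\tau_m^{-\alpha}$ and $\sum_{j}\kappa_{m,j}$ controlled by $\delta_t^\alpha$ of the constant $\Phi$-sum. Then recover $U$ via the geometric recursion, whose solution operator is positive and bounded by $\sum_k\beta^k\le(1-\beta)^{-1}=2^\alpha$.

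\textbf{Main obstacle.} The hard part is the case $\gamma>\alpha-1$ combined with arbitrary $r$, i.e.\ removing the restriction of \citep{NataliaL2} mentioned in Remark~\ref{rem_th-euniformgrid}. Here the barrier $\tau_1t^{\alpha-1}$ and its log variant at $\gamma=0$ grow towards $t\to0$ relative to the data, and its L2 truncation error near $t_1$ is not small. I would first try the decomposition $W=W_1+W_2$, where $W_1=\tau_1 t^{\alpha-1}$-type (for $\gamma>0$) realised as $\delta_t^\alpha$ applied to the discrete function equal to $1$ at $t_1$ and $\beta$-decaying, i.e.\ $\Phi^1$. By Lemma~\ref{le:tran}, $\delta_t^\alpha\Phi^1(t_m)=\kappa_{m,1}/(1-\beta)$, and $-\kappa_{m,1}\simeq\tau_1 t_m^{-\alpha-1}$ can be computed from \eqref{delta_phi_nu}. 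This converts the problem into positivity of an explicit $V$-level sum. For $\gamma=0$ the logarithm comes from summing $\sum_j\tau_j t_j^{-1}$.
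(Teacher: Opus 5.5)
Your Step~1 is essentially the paper's reduction: Lemma~\ref{lem2}, the splitting $U=\mathring U+(U-\mathring U)$, and the reduction to $\gamma\le\alpha$. One caveat: your claim that the sign conditions survive for $j<K$ and large $m$ is unjustified. For $t_m\gg t_{j+2}$ the sign of $\delta_t^\alpha\phi^j(t_m)$ is that of $-\int\Pi\phi^j$, which is positive if, say, $\tau_{j+2}=3\tau_{j+1}=3\tau_j=3\tau_{j-1}$, and \eqref{eq:meshstability} allows this for $j\le K-2$. The claim is also unnecessary. Once $U^0=\dots=U^K=0$, one has $\Pi U\equiv0$ on $(t_{K-1},t_K)$, so the remaining system is $\delta_t^\alpha$ on the shifted grid $\{t_j\}_{j\ge K-1}$ (Corollary~\ref{cor_lem1}).

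The genuine gap is the case $K=1$ (Lemma~\ref{lem1}) for arbitrary $r\ge1$, which is the whole point of the theorem (Remark~\ref{rem_th-euniformgrid}). Your Step~2 does not supply a valid barrier there. Take $\gamma=\alpha$, where your barrier is essentially $W=\tau_1B(t;t_1)$ with $B(t;t_n):=\min\{(t/t_n)\,t_n^{\alpha-1},t^{\alpha-1}\}$, so that $\pt_t^\alpha W(t_m)\simeq(\tau_1/t_m)^{\alpha+1}$. On the mesh \eqref{eq:meshstability}, the interpolation error on $(t_{m-1},t_m)$ alone contributes $\simeq\tau_1\tau_m^{3-\alpha}t_m^{\alpha-4}\simeq(\tau_1/t_m)^{\alpha+1}(t_m/\tau_1)^{\alpha-(3-\alpha)/r}$ to $\delta_t^\alpha W-\pt_t^\alpha W$. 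So your bound $\epsilon'(\tau_1/t_m)^{\gamma+1}$ is false as soon as $r>(3-\alpha)/\alpha$, which is exactly the old restriction. Rescaling an initial piece cannot help, because the excess sits at large $t_m$. (On $(t_2,t_m)$ this error actually has the favourable sign, since $\Pi$ is backward quadratic and $(t^{\alpha-1})'''<0$, but you do not use this. Moreover, the unfavourable contribution from $(t_1,t_2)$, where $\Pi$ straddles the kink of $W$, is of the same order as $\pt_t^\alpha W(t_m)$, not small.)

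Your fallbacks do not close the gap either. $\Phi^1(t_m)=\beta^{m-1}$ decays geometrically rather than like $\tau_1t_m^{\alpha-1}$, and $\delta_t^\alpha\Phi^1(t_m)=\kappa_{m,1}/(1-\beta)\le0$ for $m\ge2$, so it cannot serve as a barrier or barrier component. Producing $\hat V\ge0$ with $\sum_j\kappa_{m,j}\hat V^j\ge(\tau_1/t_m)^{\gamma+1}$ and $\hat V\lesssim\mathcal U$ is merely a restatement of the bound to be proved, at the level of the matrix $(\kappa_{m,j})$.

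The missing idea is the barrier of \citep[Lemma~2.7]{NataliaSean}, which the paper imports. There $W$ is a linear combination of the prototypes $B(t;t_n)$ over various $n\ge p$, with $p\simeq1$ fixed but large, each obeying \eqref{caputo_B}. Given the comparison principle of Corollary~\ref{th-discretecomparisonprinciple}, the only method-specific task is to check that $|B-\Pi B|$ satisfies the interpolation bounds used there for the L1 method:
\begin{itemize}
\item $B-\Pi B=0$ on $(0,t_n)$;
\item on $(t_{n+1},t_m)$, $IB-\Pi B$ (with $I$ the piecewise-linear interpolant) obeys the same bound $\lesssim(t_j-s)\tau_j t_{j-1}^{\alpha-3}$ as $B-IB$;
\item on the single interval $(t_n,t_{n+1})$, where $\Pi$ uses $t_{n-1},t_n,t_{n+1}$ across the kink, the extra contribution to \eqref{trun_int_B} is $\lesssim(\tau_{n+1}/t_{n+1})^{1-\alpha}\,\pt_t^\alpha B(t_m)$.
\end{itemize}
The last quantity is at most $\frac14\pt_t^\alpha B(t_m)$ because $\tau_{n+1}/t_{n+1}\lesssim(\tau_1/t_p)^{1/r}$ is small for $n\ge p$. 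Placing the kinks at nodes $t_n$ with $n\ge p$, where the mesh is locally fine relative to $t$, and superposing them over many $n$, is what makes the argument work for every $r\ge1$. Nothing in your proposal plays this role.
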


\begin{remark}[no restriction on $r$ in Theorem~\ref{th-euniformgrid}]\label{rem_th-euniformgrid}
A similar stability property was established in
\citep[section 2.2]{NataliaMeng} for the L1 method, \citep[section 4.2]{NataliaMeng} for the Alikhanov scheme, and
in \citep{NataliaL2} for a forward-quadratic L2 method. However, these results were obtained under the restriction that
$ r\le \frac{2-\alpha}{\alpha}$ for the L1 method, and $r\le \frac{3-\alpha}{\alpha}$ for the higher-order methods.
Any restriction on $r\ge 1$ was recently dropped due to a new proof in \citep{NataliaSean},
which we now extend to the backward L2 method.
\end{remark}

The remainder of this section is devoted to the proof of  Theorem~\ref{th-euniformgrid}, which, as we shall see below, reduces to
establishing the following Lemmas~\ref{lem1}, Corollary~\ref{cor_lem1}, and Lemma~\ref{lem2}.

\begin{lemma}\label{lem1}
The statement of Theorem~\ref{th-euniformgrid} holds true if $K=1$.
\end{lemma}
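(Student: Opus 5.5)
When $K=1$, condition \eqref{rho_cond} holds for all $j\ge1$. Hence Corollary~\ref{th-discretecomparisonprinciple} is available on the whole mesh, and by part (ii) the proof reduces to constructing a barrier. Specifically, we need $W^0=0$ and $W^m\lesssim \mathcal U^m(\tau_1;\gamma)$ with
\[
\delta_t^\alpha W^m\gtrsim (\tau_1/t_m)^{\gamma+1}\qquad\forall\,m\ge1 .
\]
Following \citep{NataliaSean}, I would avoid computing $\delta_t^\alpha$ of a smooth barrier directly, since that is what produced the restriction on $r$. Instead I would work with the representation \eqref{eq:newrepresentation}, $\delta_t^\alpha W^m=\sum_j\kappa_{m,j}V^j$. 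The plan is to prescribe $V^j$ (equivalently $W$) so that the sum can be controlled using only the sign pattern \eqref{eq:condition_kappa} and the size of $\kappa_{m,m}$.

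The concrete choice is to take $W$ as the solution of an L1-type (M-matrix) or exact-Caputo problem. First take $W=\mathcal U$-type functions, e.g. $W(t)=\tau_1 t^{\alpha-1}(\tau_1/t)^{\min\{0,\gamma\}}$ for $t\ge\tau_1$, extended linearly near $0$. Here $\mathcal U^m(\tau_1;\gamma)$ is comparable to the solution of $\partial_t^\alpha w=(\tau_1/t)^{\gamma+1}$ (standard Caputo calculus, with the log for $\gamma=0$). The inequality $\delta_t^\alpha W^m\gtrsim(\tau_1/t_m)^{\gamma+1}$ then splits into two parts:
\begin{itemize}
\item[(a)] The first few steps $m\lesssim 1$ and $t_m\lesssim\tau_1$. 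These are handled using $\kappa_{m,m}\simeq \tau_m^{-\alpha}$, from \eqref{AB_def_aa}, and the nonpositivity of the other $\kappa_{m,j}$ acting on a sign-controlled $V^j$.
\item[(b)] The bulk $t_m\gg\tau_1$. Here we compare $\delta_t^\alpha W^m$ with $\partial_t^\alpha W(t_m)$ via the truncation error of the backward quadratic interpolation of $W$.
\end{itemize}

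The truncation error in (b) is the main obstacle. On graded meshes without restriction on $r$, the interpolation error of $W$ near $t=0$ can dominate $\partial_t^\alpha W$ at $t_m$.

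To handle (b), I would use the trick from \citep{NataliaSean}: split $W=W_1+W_2$. The piece $W_1$ is supported near the origin, say equal to $\tau_1$ times a hat/Heaviside-type function like $\psi$, and its discrete derivative is computed exactly via \eqref{delta_psi}, giving a positive tail $\simeq \tau_1 t_m^{-\alpha}$. The piece $W_2$ is smooth away from $0$ and vanishes on $[0,t_{1}]$, so its truncation error is bounded by $\tau_j^{3-\alpha}|W_2'''|$-type terms. These are absorbed by the positive tail of $W_1$ plus a small multiple of $\partial_t^\alpha W_2$. This works after choosing constants appropriately and using \eqref{eq:meshstability} to compare $\tau_j$ with $\tau_1^{1/r}t_j^{1-1/r}$.

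Finally, Corollary~\ref{th-discretecomparisonprinciple}(ii), applied to $U$ and $CW$, gives $|U^m|\le CW^m\lesssim\mathcal U^m(\tau_1;\gamma)$. The three cases $\gamma<0$, $\gamma=0$ and $\gamma>0$ are handled by the same barrier exponents as in \citep{NataliaSean}, with the logarithm arising from $\int_{\tau_1}^{t}s^{-1}\,\dd s$.
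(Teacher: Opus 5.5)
You correctly reduce the problem to a barrier via Corollary~\ref{th-discretecomparisonprinciple}(ii), and you rightly single out the bulk truncation error as the crux. However, the mechanism you propose for it fails, and the idea that actually controls it is missing.

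Take the critical case $\gamma=\alpha$, where $W\simeq\tau_1t^{\alpha-1}$ and $\partial_t^\alpha W(t_m)\simeq(\tau_1/t_m)^{1+\alpha}$. Suppose $W_2$ vanishes on $[0,t_1]$ and reaches size $\simeq\tau_1^{\alpha}$ within $O(1)$ cells. Then on those cells the interpolation error is as large as $W_2$ itself. By \eqref{trun_int}, these cells contribute to the truncation error at $t_m\gg\tau_1$ a term generically of order $\tau_1^{1+\alpha}t_m^{-1-\alpha}$. That is the same order as $\partial_t^\alpha W_2(t_m)$, not a small multiple of it; your local ``$\tau_j^{3-\alpha}|W_2'''|$'' bound ignores exactly these nonlocal contributions from early cells.

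The positive tail $\simeq\tau_1t_m^{-\alpha}$ of $\tau_1$ times a Heaviside step cannot absorb this. (Note that $\psi^j$ is decreasing, so its own tail is negative.) The tail is smaller than the error by the factor $t_m\tau_1^{-\alpha}$, hence useless on the whole range $\tau_1\ll t_m\ll\tau_1^{\alpha}$. Raising the step height to $\tau_1^\alpha$ would violate $W\lesssim\mathcal U^m$ whenever $\gamma>\alpha-1$. A bump supported near the origin instead has a nonpositive discrete tail, by Lemma~\ref{lem_cond3}.

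Separately, your explicit choice $W=\tau_1t^{\alpha-1}$ for $0<\gamma<\alpha$ gives $\partial_t^\alpha W\simeq(\tau_1/t)^{1+\alpha}$, which is too small. Being comparable in size to the solution of $\partial_t^\alpha w=(\tau_1/t)^{\gamma+1}$ does not make its fractional derivative comparable to $(\tau_1/t)^{\gamma+1}$.

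The missing ingredient, which the paper takes from \citep{NataliaSean}, is to build $W$ from prototype barriers $B(t;t_n)=\min\{(t/t_n)t_n^{\alpha-1},t^{\alpha-1}\}$ with $n\ge p$, for a fixed but sufficiently large $p\simeq1$. Each such $B$ has three useful properties:
\begin{itemize}
\item It is \emph{linear} on $[0,t_n]$, so $\Pi B=B$ there and there is no interpolation error near the origin at all.
\item It satisfies $\partial_t^\alpha B(t)\ge c_\alpha(t_n/t)^{\alpha}t^{-1}$, by \eqref{caputo_B}.
\item Beyond the kink, relative interpolation errors are governed by $\tau_j/t_j\lesssim(\tau_1/t_p)^{1/r}$, which is small once $p$ is large.
\end{itemize}

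The paper then does not redo the barrier construction. It shows that $|B-\Pi B|$ obeys the same bound as the L1 error $|B-IB|$ on every interval except $(t_n,t_{n+1})$. There the backward-quadratic stencil $\{t_{n-1},t_n,t_{n+1}\}$ straddles the kink, giving only $|(IB-\Pi B)(s)|\lesssim(t_{n+1}-s)\,t_n^{\alpha-2}$. This single piece is estimated separately as $\lesssim(\tau_{n+1}/t_{n+1})^{1-\alpha}\,\partial_t^\alpha B(t_m)$, which is at most $\frac14\partial_t^\alpha B(t_m)$ for $p$ large. After that, \citep[Lemma~2.7]{NataliaSean} applies verbatim.

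Your proposal contains neither the ``linear up to a kink at $t_n$, $n\ge p$ large'' mechanism nor any treatment of this L2-specific straddling interval.
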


\begin{proof}
 In view of Corollary~\ref{th-discretecomparisonprinciple}, it suffices to
find a barrier function $0\le W(t)\lesssim \tau_{1}^{-1}\mathcal{U}^{m}$ such that $\pt_t^\alpha W(t)\gtrsim t_{m}^{-1}(\tau_{1}/t_{m})^{\gamma}$
and
its truncation error $r_W^m:=(\delta_t^\alpha-\pt_t^\alpha) W(t_m)$ is dominated by
$\frac12\pt_t^\alpha W(t_m)$.
For the L1 method, such $W$ was constructed in the proof of \cite[Lemma 2.7]{NataliaSean}  
%
 as a linear combination of prototype barriers
$$
B(t)=B(t\,;t_n):=\min\bigl\{(t/t_n)\,t_n^{\alpha-1}, t^{\alpha-1} \bigr\},
$$
with various $n\ge p$, where $p\simeq 1$ is a sufficiently large fixed number.
Such prototype barriers are helpful, since an elementary evaluation (see, for example, \citep[(2.14)]{NataliaSean}) yields
an explicit bound
\beq\label{caputo_B}
\pt^\alpha_t B(t)
\ge
c_\alpha\,
 (t_n/t)^{\alpha}\,t^{-1}
\quad\forall\,t>t_n>0, \qquad c_\alpha:=\frac{1-\alpha}{\Gamma(1-\alpha)}.
\eeq
While the construction and estimation of $W$ and $\delta_t^\alpha W$ in \citep{NataliaSean}
are rather intricate,
 for the proof of \citep[Lemma 2.7]{NataliaSean} to apply to our method (or any other inverse-monotone method), it suffices to check that the truncation error
 $r_B^m:=(\delta_t^\alpha-\pt_t^\alpha) B(t_m)$
for our higher-order method enjoys
at least the same bound as the corresponding truncation error for the L1 method.

Thus, the remainder of the proof is devoted to the comparison of $r_B^m$ and the corresponding truncation error for the L1 method.
Note  that the truncation error of any interpolation-based method can be represented in terms of the corresponding interpolation error
(as, for example, shown below in \eqref{trun_int}) as follows:
\beq\label{trun_int_B}
\Gamma(1-\alpha)\,|r_B^m|
\le \alpha
\int_{0}^{t_m}
\!\!\!(t_m-s)^{-\alpha-1}|(B-\Pi B)(s)|\,\dd s.
\eeq
 Considering that our method is based on the backward-quadratic interpolation operator $\Pi$ of \eqref{eq:defdelta},
while the L1 method is based on the standard linear interpolation operator, which we shall denote by $I$, we would like to show that
$|B-\Pi B|$ enjoys the same bound as $|B-I B|$.
The bound for the latter is given in \citep[Lemma 2.3]{NataliaMeng}.
First, on $ (0,t_n)$, where $B$ is linear, $B-IB=0$, as well as $B-\Pi B=0$.
Next, on $(t_{n+1},t_m)$ one has
$|(B-I B)(s)|\lesssim (t_j-s)\tau_j\,t_{j-1}^{\alpha-3}$ $\forall\,s\in(t_{j-1},t_j)$, $j>n+1$ (where we used $|B''|\lesssim t_{j-1}^{\alpha-3}$
on $(t_{j-1},t_j)$).
Additionally,
for $s\in(t_{j-1},t_j)$ one notes that
$(IB-\Pi B)(s)=\frac12(t_j-s)(s-t_{j-1})(\Pi B)''$, where $|(\Pi B)''|=|B''(\xi_j)|\lesssim t_{j-2}^{\alpha-3}$ for some $\xi_j\in(t_{j-2},t_j)$.
Hence, $|(I B-\Pi B)(s)|\lesssim (t_j-s)\tau_j\,t_{j-1}^{\alpha-3}$ (as $t_{j-2}\simeq t_{j-1}$), i.e. one gets the same bound
for $I B-\Pi B$ as for $B-I B$. So $|B-\Pi B|$ on $(t_{n+1},t_m)$ enjoys the same interpolation bound as the one for $|B-I B|$ used in \citep[Lemma 2.7]{NataliaSean}.

It remains to consider $I B-\Pi B$ on $(t_n,t_{n+1})$,
which is more delicate, since here
 $\Pi$ uses the interpolation points $\{ t_{n-1},t_{n},t_{n+1} \}$, while $B'$ is discontinuous at $t_n$.
A direct evaluation shows that now 
$$
|(\Pi B)''|=\frac{2}{\tau_n+\tau_{n+1}}\Bigl|\frac{B(t_{n+1})-B(t_{n})}{\tau_{n+1}}-\frac{B(t_{n})-B(t_{n-1})}{\tau_{n}}\Bigr| \lesssim
(\tau_n+\tau_{n+1})^{-1}\,t_n^{\alpha-2},
$$
where we used  $|B'(s)|\lesssim  t_{n}^{\alpha-2}$.
The resulting bound $|(I B-\Pi B)(s)|\lesssim(t_{n+1}-s)\,t_n^{\alpha-2}$ is worse than $|B-I B|\lesssim(t_{n+1}-s)\tau_{n+1}\,t_{n}^{\alpha-3}$ on this single interval $(t_n,t_{n+1})$,
so the corresponding ingredient in the truncation error integral
 \eqref{trun_int_B} will be estimated separately:
\begin{align*}
\hat r_B^m {}:={}&
\alpha
\int_{t_n}^{t_{n+1}}
\!\!\!(t_m-s)^{-\alpha-1}|(I B-\Pi B)(s)|\,\dd s
\lesssim
(t_{n+1}/t_m)^{\alpha+1}\,t_n^{\alpha-2}
\int_{t_n}^{t_{n+1}}
\!\!\!(t_{n+1}-s)^{-\alpha}\,\dd s,
\intertext{where we also used the observation that $(t_{n+1}-s)/(t_m-s)\le t_{n+1}/t_m$.
Combining the above with $\int_{t_n}^{t_{n+1}}(t_{n+1}-s)^{-\alpha}\,\dd s\lesssim \tau_{n+1}^{1-\alpha}$ and $t_{n+1}\simeq t_n$, and then with \eqref{caputo_B},
one gets}
\hat r_B^m\lesssim{}&
(\tau_{n+1}/t_{n+1})^{1-\alpha}\cdot\bigl[(t_{n}/t_m)^{\alpha}\,t_m^{-1}\bigr]
=(\tau_{n+1}/t_{n+1})^{1-\alpha}\cdot \bigl[c_\alpha^{-1} \pt^\alpha_t B(t_m)\bigr].
\end{align*}
Recall that $W(t)$ is a linear combination of the prototype functions $B(t\,;t_n)$, while our ultimate task is to show that the bulk truncation error
$r_W^m=(\delta_t^\alpha-\pt_t^\alpha) W(t_m)$ is dominated by
$\frac12\pt_t^\alpha W(t_m)$.
Hence, it suffices for the remaining ingredient $\hat r_B^m$ of the truncation error $r_B^m$ to be
$\le\frac14 \pt_t^\alpha B(t_m)$.
For the latter to be true, note that
$\tau_{n+1}/t_{n+1}\lesssim (\tau_{1}/t_{n+1})^{1/r}\le (\tau_{1}/t_p)^{1/r}$, in view of \eqref{eq:meshstability} and $n\ge p\gtrsim1$,
while $\tau_{1}/t_p\le p^{-1}$ (in view of \eqref{rho_cond}) can be made arbitrarily small by choosing $p$ sufficiently small.
This observation completes the proof.
\end{proof}

\begin{corollary}\label{cor_lem1}
Under the assumptions of Theorem~\ref{th-euniformgrid} with $2\le K\lesssim 1$,
suppose, additionally, that  $U^0=U^1=\cdots=U^K=0$. Then one gets \eqref{eq:stability}.
\end{corollary}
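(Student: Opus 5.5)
The plan is to reduce Corollary~\ref{cor_lem1} to Lemma~\ref{lem1} by working on a shifted grid. Since $U^0=\cdots=U^K=0$, the values $U^j$ with $j\ge K+1$ are the only unknowns. Consider the time levels $\{t_j\}_{j\ge K-1}$. We relabel them as a new mesh $\tilde t_i:=t_{K-1+i}-t_{K-1}$ with $\tilde t_0=0$. The backward-quadratic interpolant $\Pi U$ vanishes identically on $(0,t_K)$, except possibly on $(t_{K},t_{K+1})$ near the left end. On $(t_{K-1},t_K)$ it vanishes because $\Pi_{2,K}$ uses $U^{K-2},U^{K-1},U^K$, all zero. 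On $(t_K,t_{K+1})$ the operator $\Pi_{2,K+1}$ uses $U^{K-1}=U^K=0$ and $U^{K+1}$. This reproduces precisely the structure of $\delta_t^\alpha$ on the shifted mesh, except that on the first shifted interval the original operator uses a quadratic rather than a linear interpolant.

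For that reason, I would not shift. Instead I would repeat the argument of Lemma~\ref{lem1} directly, noting the following.

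\textbf{(a) Restricted inverse monotonicity.} Inverse monotonicity is only needed for the restriction of $\delta_t^\alpha$ to vectors vanishing at $j\le K$. For such vectors, the representation \eqref{eq:newrepresentation} involves only $\kappa_{m,j}$ with $j\ge K+1$. The conditions of Lemma~\ref{lem_suff} are then needed only for $j\ge K+1$, where $\delta_t^\alpha\phi^j(t_m)$ is computed from $\Pi\phi^j$, and $\Pi\phi^j$ is supported on $(t_{j-1},t_{j+2})$. The computations of Lemmas~\ref{lem_cond3} and~\ref{lem_cond_ab} use \eqref{rho_cond} only for the steps $\tau_{j-1},\dots,\tau_m$, i.e.\ for indices $\ge K$. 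The exception is the bound \eqref{b_hat_23} at $m=K+3$, where $\hat b_{K}$ plays the role of $\hat b_{m-3}$; there one may simply use $\hat b_{j}=0$ for the truncated basis. Thus Corollary~\ref{th-discretecomparisonprinciple} holds on the subspace $\{U:U^j=0,\ j\le K\}$, with the matrix being the lower-right block of a product of M-matrices, which is again such a product.

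\textbf{(b) Barrier construction.} I would take the barrier $W$ from Lemma~\ref{lem1}, i.e.\ combinations of $B(t;t_n)$ with $n\ge p$, where $p$ is now chosen with $p\ge K+1$ (allowed since $K\lesssim1$). I would then set $\tilde W^j:=W(t_j)$ for $j>K$ and $\tilde W^j:=0$ for $j\le K$. The difference $\tilde W-W$ at the nodes is supported on $j\le K$, where $W(t_j)=(t_j/t_n)t_n^{\alpha-1}\lesssim \tau_1 t_n^{\alpha-2}$ is small. Its effect on $\delta_t^\alpha$ at $t_m$, $m\ge K+1$, is
\[
\Bigl|\sum_{j\le K}\kappa_{m,j}W(t_j)\Bigr|\lesssim \tau_1\, t_m^{-\alpha-1}.
\]
This follows because $\Pi$ of a vector supported on the first $K+2$ nodes is supported on $(0,t_{K+2})$, and the kernel is evaluated at distance $\gtrsim t_m$ for $m\ge K+3$. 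The cases $m=K+1,K+2$ are handled directly, since there everything is $\simeq\tau_1$ in size. This perturbation is dominated by $\pt_t^\alpha W(t_m)\gtrsim t_m^{-1}(\tau_1/t_m)^\gamma$ after rescaling the barrier by a large constant. If that domination fails in some $\gamma$ range, add to $W$ a multiple of $B(t;t_{K+1})$, which supplies $\gtrsim \tau_1^{\alpha}t_m^{-\alpha-1}$.

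\textbf{(c) Conclusion and main obstacle.} The comparison principle then gives $|U^m|\le \tilde W^m\lesssim\mathcal U^m$. The truncation-error comparison with L1 is unchanged from Lemma~\ref{lem1}. The main obstacle is step (a): carefully verifying that no condition in Lemmas~\ref{lem_cond3}--\ref{lem_cond_ab} implicitly uses \eqref{rho_cond} at indices below $K$, especially the $j=m-3$ comparison \eqref{b_hat_23}, and the boundary entries $\kappa_{m,K+1}$.
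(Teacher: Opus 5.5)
Step (b) is where your argument breaks, not step (a). The values you discard are not small. $W$ is linear on $[0,t_p]$ with $t_p\simeq\tau_1$, and the barrier requirements $\pt_t^\alpha W(t_{K+1})\gtrsim \tau_1^{-1}$ and $W\lesssim\tau_1^{-1}\mathcal U^m$ force $W(t_j)\simeq\tau_1^{\alpha-1}$ for $1\le j\le K$. Your own bound $\tau_1t_n^{\alpha-2}$ gives exactly this for $n=p$. So zeroing them changes $\delta_t^\alpha$ by an amount of size $\tau_1^{\alpha}t_m^{-\alpha-1}$, not $\tau_1t_m^{-\alpha-1}$. Relative to $t_m^{-1}(\tau_1/t_m)^{\gamma}$ this is $(\tau_1/t_m)^{\alpha-\gamma}$, which is of order one for $t_m\simeq\tau_1$. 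Because the perturbation is linear in $W$, rescaling $W$ cannot make it subordinate.

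Concretely, the sign fails at $m=K+2$, which you dismiss because ``everything is $\simeq\tau_1$''. Only $\tilde W^{K+1}=at_{K+1}$ and $\tilde W^{K+2}=at_{K+2}$ enter there, so on a uniform grid
\[
\tau^{\alpha}\,\Gamma(2-\alpha)\,\delta_t^\alpha\tilde W^{K+2}=a\tau\,\bigl[\A_{K+2}(K+2)-\B_{K+2}(K+1)\bigr].
\]
As $\alpha\to1^-$ the bracket tends to the BDF2 value $\frac12\bigl[3(K+2)-4(K+1)\bigr]=1-\frac K2$, which is negative for $K\ge3$. So $\tilde W$ is not a supersolution. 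Adding a truncated $B(\cdot\,;t_{K+1})$ does not help: its nodal values decrease after $t_{K+1}$, which makes the same bracket worse. What you need is a barrier that vanishes up to $t_{K-1}$ by construction, i.e.\ one built on the shifted grid.

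That is exactly the route you discarded, and your reason for discarding it is not an obstacle. The paper applies Lemma~\ref{lem1} to $\{U^j\}_{j\ge K-1}$ on the grid $\{t_j\}_{j\ge K-1}$. On $(t_{K-1},t_K)$ the original quadratic interpolant (nodes $t_{K-2},t_{K-1},t_K$) and the restricted linear one (nodes $t_{K-1},t_K$) both interpolate zero data. On every later interval the two operators use the same nodes. Hence $\Pi U\equiv0$ on $(0,t_K)$ in both settings, and $\delta_t^\alpha U^m$ is literally unchanged for all $m\ge K$. The restricted grid satisfies \eqref{rho_cond} for all its indices. Also, \eqref{eq:meshstability} with $K\lesssim1$ gives $\tau_K\simeq\tau_1$ and $t_{m+K-1}\simeq t_m$, so both the hypothesis and the conclusion of \eqref{eq:stability} transfer between the two grids.

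This also makes your step (a) superfluous: Lemma~\ref{lem1} on the restricted grid already contains the needed inverse monotonicity, via Theorem~\ref{theo_beta_repr} applied there. Incidentally, your suggestion to set $\hat b_K=0$ at $m=K+3$ is wrong, since $b_K$ comes from the genuinely quadratic interpolant through $t_{K-1},t_K,t_{K+1}$. It is also unnecessary, because \eqref{b_hat_23} at $m=K+3$ only involves $\rho_{K+1},\rho_{K+2},\rho_{K+3}$.
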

\begin{proof}
Note that \eqref{eq:meshstability} combined with $K\simeq 1$ implies that $\tau_K\simeq \tau_1$ and $t_{m+K-1}\simeq t_m$.
Hence, it suffices to prove a version of \eqref{eq:stability}  for $\{U^j\}_{j\ge K-1}$ on the grid $\{t_j\}_{j\ge K-1}$.
Importantly, the interpolation operator $\Pi$, used in the definition \eqref{eq:defdelta}
  of $\delta_{t}^{\alpha} $, is linear on $(t_{K-1},t_K)$ in the context of the restricted grid $\{t_j\}_{j\ge K-1}$, but is backward-quadratic in the context
  of the original grid $\{t_j\}_{j\ge 0}$. Nevertheless, $\Pi U^K=0$ on $(t_{K-1},t_K)$ for both grids, so $\delta^\alpha_t U^m$ remains unchanged $\forall\, m\ge K$ when we consider the restricted grid. With this observation,
 an application of Lemma~\ref{lem1} to $\{U^j\}_{j\ge K-1}$ on the grid $\{t_j\}_{j\ge K-1}$
immediately yields the desired result.
\end{proof}

\begin{lemma}\label{lem2}
Under the assumptions of Theorem~\ref{th-euniformgrid} on the grid, with $2\le K\lesssim 1$, suppose that $U^0=0$ and
$|\delta_t^\alpha U^m|\lesssim 1$ $\forall\,m\le K$,  while $U^m=0$ $\forall\,m\ge K+1$.
Then $|U^m|\lesssim \tau_1^\alpha$ $\forall\,m\le K$ and $|\delta_t^\alpha U^m|\lesssim (\tau_1/t_m)^{\alpha+1}$ $\forall\,m\ge K+1$.
\end{lemma}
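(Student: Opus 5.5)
Two separate claims are at stake, handled by two different mechanisms: a finite lower-triangular solve on the first $K$ levels, and a direct kernel evaluation for $m\ge K+1$, where $U$ vanishes at the current level and its two predecessors.

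\textbf{Step 1: the first $K$ levels.}
For $m\le K$ the equations $\delta_t^\alpha U^m=F^m$, with $|F^m|\lesssim 1$, form a $K\times K$ lower-triangular system in $U^1,\dots,U^K$ with coefficients $\delta_t^\alpha\phi^j(t_m)$. By \eqref{eq:meshstability} and $K\lesssim1$, all $\tau_j$ and $t_j$ with $j\le K+1$ are $\simeq\tau_1$. A scaling argument, substituting $t=\tau_1\hat t$, gives $\delta_t^\alpha\phi^j(t_m)=\tau_1^{-\alpha}\hat\kappa_{m,j}$, where $\hat\kappa_{m,j}$ are the coefficients of the same operator on the rescaled grid $\{t_j/\tau_1\}_{j\le K}$. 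This rescaled grid ranges over a compact set of admissible grids, its steps being bounded above and below by constants.

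The diagonal entries are $\A_m>0$; by \eqref{AB_def_aa} in fact $\A_m=1+\hat b_{m-1}\ge1$, and this identity holds on any grid, so it does not need \eqref{rho_cond} for $m\le K$. The rescaled matrix is therefore unit-lower-triangular up to diagonal entries bounded in $[1,C]$, with off-diagonal entries bounded by a constant depending only on $K$ and the mesh constants. Its inverse is thus bounded by a constant $C(K)$, which yields $|U^m|\lesssim\tau_1^\alpha\max_{m\le K}|F^m|\lesssim\tau_1^\alpha$.

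\textbf{Step 2: levels $m\ge K+1$.}
Here $U^m=0$ for $m\ge K+1$, so $\Pi U$ is supported in $(0,t_{K+2})$: the backward quadratic on $(t_{K+1},t_{K+2})$ still uses $U^K$. Hence for $m\ge K+1$ we write
\[
\Gamma(1-\alpha)\,\delta_t^\alpha U^m=\int_0^{t_{K+2}}(t_m-s)^{-\alpha}(\Pi U)'(s)\,\dd s .
\]
We split into two cases.

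For $m\ge K+3$ we have $t_m-s\ge t_m-t_{K+2}\gtrsim t_m$. Note $t_m\ge t_{K+3}\simeq\tau_1$, and $t_m-t_{K+2}\ge\tau_{K+3}\simeq\tau_1$, so $t_m-s\simeq t_m$ there. Integrating by parts, the boundary terms vanish because $\Pi U(0)=0$ and $\Pi U(t_{K+2})=0$, giving
\[
\alpha\int_0^{t_{K+2}}(t_m-s)^{-\alpha-1}\,\Pi U(s)\,\dd s .
\]
Using $|\Pi U|\lesssim\max_{j\le K}|U^j|\lesssim\tau_1^\alpha$ and $t_{K+2}\simeq\tau_1$, this is bounded by $\tau_1^{1+\alpha}t_m^{-\alpha-1}=(\tau_1/t_m)^{\alpha+1}$.

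For $m=K+1,K+2$ we have $t_m\simeq\tau_1$, so it suffices to show $|\delta_t^\alpha U^m|\lesssim1$. This follows from $|\delta_t^\alpha\phi^j(t_m)|\lesssim\tau_1^{-\alpha}$, by the same scaling as in Step 1, combined with $|U^j|\lesssim\tau_1^\alpha$.

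\textbf{Main obstacle.}
The delicate point is the uniform invertibility in Step 1. It requires that the constants be independent of $M$, since only finitely many rescaled grids are relevant and they form a compact family, and also independent of how \eqref{rho_cond} may fail for $j<K$. The positivity $\A_m\ge1$ supplies exactly this, and I would present the bound on the inverse by forward substitution with the explicit constant $C(K)$.
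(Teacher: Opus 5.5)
Your proof is correct and follows essentially the paper's argument: forward substitution on the first $K$ levels, using a diagonal entry $\delta_t^\alpha\phi^m(t_m)\gtrsim\tau_1^{-\alpha}$ and off-diagonal entries $\lesssim\tau_1^{-\alpha}$, then an integration-by-parts bound of order $\tau_1 t_m^{-\alpha-1}\sum_{j\le K}|U^j|$ for $m\ge K+1$ (you get the diagonal bound from $\A_m=1+\hat b_{m-1}\ge1$, the paper from $(\Pi\phi^m)'\gtrsim\tau_m^{-1}$). Your split into $m\in\{K+1,K+2\}$ and $m\ge K+3$ just spells out the paper's ``standard calculation''; note that your displayed integral over $(0,t_{K+2})$ is not valid for $m=K+1$, where the upper limit must be $t_m$, though you never use it in that case.
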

\begin{proof}
 As $K\lesssim 1$, by \eqref{eq:meshstability}, one has $\tau_{j}\simeq \tau_{1}$ and $t_{j}\simeq \tau_{1}$ for $\forall\, j \leq K+2$.
 Additionally, $\delta_t^\alpha U^m=\sum_{j=1}^{m^*} U^j\,\pt_t^\alpha(\Pi\phi^j)(t_m)$,
 where $m^*:=\min\{m, K\}$, and  $\{\phi^{j}(t_{k})\}$ are the standard hat functions (see Fig.\,\ref{fig_basis}\,(right)).
 In this context,
  in view of $\tau_{j}\simeq \tau_{1}$, each $|\Pi\phi^j|$ is $\lesssim 1$ and has support on $(t_{j-1},t_{j+2})$, with
 $(\Pi\phi^m)'\gtrsim\tau_m^{-1}$ on $(t_{m-1},t_m)$.
 Now, a standard calculation (using integration by parts of type \eqref{trun_int}) shows that
 $|\pt_t^\alpha(\Pi\phi^j)(t_m)|\lesssim  \tau_1\, t_m^{-\alpha-1}$ $\forall\,j<m^*$, with a particular case
 $|\pt_t^\alpha(\Pi\phi^j)(t_m)|\lesssim  \tau_1^{-\alpha}$ $\forall\,j<m\le K$.
 Additionally, one gets $\pt_t^\alpha(\Pi\phi^m)(t_m)\gtrsim \tau_1^{-\alpha}$ $\forall\,m\le K$.
 Hence, $\forall\,m\le K$
 our hypothesis $|\delta_t^\alpha U^m|\lesssim 1$ yields the first desired bound
 $|U^m|\lesssim \tau_1^{\alpha}+\sum_{j=1}^{m-1}|U^j|\lesssim \tau_1^{\alpha}$.
Combining the latter with
  $|\delta_t^\alpha U^m|\lesssim \tau_1\, t_m^{-\alpha-1}\sum_{j=1}^{K} |U^j|$
  $\forall\,m\ge K+1$ yields the remaining desired bound on $|\delta_t^\alpha U^m|$.
 \end{proof}

\begin{proof}[Proof of Theorem~\ref{th-euniformgrid}]
 It suffices to prove \eqref{eq:stability} for $\gamma\le \alpha$ (as, otherwise, $(\tau_1/t_m)^{\gamma+1}\le (\tau_1/t_m)^{\alpha+1}$ while $\mathcal{U}^{m}(\tau_{1};\gamma)= \mathcal{U}^{m}(\tau_{1};\alpha)$).
 Set $\mathring U^j:=U^j$ $\forall\, j\le K$ and $\mathring U^j:=0$ otherwise.
 Then $\{\mathring U^j\}_{j=0}^M$ satisfies the hypotheses of Lemma~\ref{lem2}.
 An application of this lemma yields
 $|U^m|=|\mathring U^m|\lesssim \tau_1^\alpha\simeq \mathcal{U}^{m}(\tau_{1};\gamma)$ $\forall\,m\le K$.
 Additionally, this lemma implies $\forall\,m> K$ that
 $|\delta_t^\alpha (U^m-\mathring U^m)|\le |\delta_t^\alpha U^m|+|\delta_t^\alpha \mathring U^m|$, where  $|\delta_t^\alpha U^m|\lesssim (\tau_1/t_m)^{\gamma+1}$ and $|\delta_t^\alpha \mathring U^m|\lesssim (\tau_1/t_m)^{\alpha+1}$.
 Recalling that  $\gamma\le \alpha$, one concludes that $|\delta_t^\alpha (U^m-\mathring U^m)|\lesssim (\tau_1/t_m)^{\gamma+1}$ $\forall\,m> K$.
 Finally, an application of Corollary~\ref{cor_lem1} to the discrete function $\{U^m-\mathring U^m\}$ yields the desired assertion on $|U^m-\mathring U^m|=|U^m|$
 $\forall\,m> K$.
\end{proof}

For the error estimation in the time-fractional parabolic case (with spatial derivatives), we shall need the following, more subtle, version of
Theorem~\ref{th-euniformgrid}.

\begin{corollary}[{\citep{NataliaL2}}]\label{co-kW}
Let the temporal mesh satisfy \eqref{eq:meshstability}  for some $r\geq 1$, and also \eqref{rho_cond}
$\forall\,j\ge 1$ with a sufficiently small $\eps$.
Additionally,
let $\beta=1-2^{-\alpha}$, and
$\{\kappa_{m,j}\}$ be the unique set of the coefficients in the corresponding representation \eqref{eq:newrepresentation} for the operator $\delta_{t}^{\alpha}$
of \eqref{eq:defdelta}.
  Then,   given $\gamma\in\mathbb{R}$, the following is true for $\{U^{j}\}_{j=0}^{M}$
  and $\{W^{j}\}_{j=0}^{M}$ with $U^{0}=W^{0}=0$:
    \begin{align}\label{eq:mathcalW}
        \left.
         \begin{array}{cl}
            \sum_{j=0}^{m} \kappa_{m,j}\, W^{j}\lesssim (\tau_{1}/t_{m})^{\gamma+1}&\forall\, m\geq 1\\[4pt]
            \frac{|U^{m}|-\beta|U^{m-1}|}{1-\beta}\lesssim W^{m}\ &\forall \,\geq 1
         \end{array}
         \right\}\Rightarrow
        |U^{m}|\lesssim\mathcal{U}^{m}(\tau_{1};\gamma)\quad\forall\,m\ge 1,
\end{align}
where $\mathcal{U}^{m}$ is defined in \eqref{eq:stability}.
\end{corollary}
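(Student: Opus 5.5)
The plan is to reduce Corollary~\ref{co-kW} to Theorem~\ref{th-euniformgrid}, applied not to $U$ itself but to an auxiliary comparison sequence; this is how the corresponding result of \citep{NataliaL2} is obtained. Throughout we use that Theorem~\ref{theo_beta_repr} (with $\beta=1-2^{-\alpha}$ and \eqref{rho_cond} for all $j\ge1$) gives the sign pattern \eqref{eq:condition_kappa} for the coefficients $\kappa_{m,j}$.

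\textbf{Step 1: a two-factor M-matrix structure.} Write $\delta_t^\alpha=\mathcal K\,\mathcal S$. Here $\mathcal S$ maps $\{U^j\}$ to $\{V^j\}$ with $V^j=(U^j-\beta U^{j-1})/(1-\beta)$; it is lower bidiagonal with positive diagonal and nonpositive subdiagonal, so it is an M-matrix. Its inverse, $U^m=(1-\beta)\sum_{j\le m}\beta^{m-j}V^j$, has nonnegative entries. The matrix $\mathcal K=(\kappa_{m,j})$ is lower triangular with $\kappa_{m,m}>0$ and $\kappa_{m,j}\le0$ for $j<m$ (the column $j=0$ is irrelevant since $W^0=0$). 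Hence $\mathcal K$ is also an M-matrix and $\mathcal K^{-1}\ge0$ entrywise.

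\textbf{Step 2: define the comparison sequence.} Set $\tilde V^m:=\frac{|U^m|-\beta|U^{m-1}|}{1-\beta}$ and $\tilde V^0:=0$. Then the second hypothesis reads $\tilde V^m\le CW^m$, and inverting $\mathcal S$ gives $|U^m|=(1-\beta)\sum_{j=1}^m\beta^{m-j}\tilde V^j$. Next define $Z$ as the solution of $\mathcal K Z=F$, where $F^m:=\max\{\sum_j\kappa_{m,j}W^j,\,0\}$, and $Z^0=0$. Since $\mathcal K^{-1}\ge0$ and $F\ge \mathcal K W$, we get $W\le Z$ and $Z\ge 0$. Also $0\le F^m\lesssim(\tau_1/t_m)^{\gamma+1}$. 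Finally, let $Y:=\mathcal S^{-1}Z$, that is, $Y^m=(1-\beta)\sum_{j\le m}\beta^{m-j}Z^j$, so that $Y\ge0$ and $Y^0=0$.

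\textbf{Step 3: compare $|U|$ with $Y$.} By positivity of $\mathcal S^{-1}$ and $\tilde V\le CW\le CZ$,
\[
|U^m|=(1-\beta)\sum_{j\le m}\beta^{m-j}\tilde V^j\le C(1-\beta)\sum_{j\le m}\beta^{m-j}Z^j=CY^m .
\]

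\textbf{Step 4: apply the stability theorem to $Y$.} We have $\delta_t^\alpha Y^m=(\mathcal K\mathcal S Y)^m=(\mathcal K Z)^m=F^m$. Therefore $|\delta_t^\alpha Y^m|\lesssim(\tau_1/t_m)^{\gamma+1}$ and $Y^0=0$. Theorem~\ref{th-euniformgrid} then gives $Y^m\lesssim\mathcal U^m(\tau_1;\gamma)$, and combined with Step 3 this yields the claim.

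\textbf{Main point of care.} The delicate part is Step 2, where the hypothesis on $\mathcal K W$ is only one-sided. This is handled by replacing $W$ with the dominating $Z$, so the hypothesis is used only through $\mathcal K^{-1}\ge0$. One must also check that the representation \eqref{eq:newrepresentation} holds with the same $\kappa_{m,j}$ for every sequence, so that $\delta_t^\alpha=\mathcal K\mathcal S$ exactly. This follows from Lemma~\ref{le:tran}, because $\{\Phi^j\}$ is a basis and, for $U^0=0$, $U=\sum_{j\ge1}(1-\beta)V^j\Phi^j$.
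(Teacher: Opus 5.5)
Your proposal is correct and follows essentially the same argument as the paper's proof. You factor $\delta_t^\alpha$ into two lower-triangular inverse-monotone matrices, dominate $|U|$ by the solution $U_*$ of $\delta_t^\alpha U_*^m=F^m$ with $U_*^0=0$, and apply Theorem~\ref{th-euniformgrid} to $U_*$; the only cosmetic difference is that the paper takes $F^m:=(\tau_1/t_m)^{\gamma+1}$ directly, whereas you take the positive part of $\sum_j\kappa_{m,j}W^j$.
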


\begin{proof}
A version of this result is given in \cite[Theorem 3.2${}^*$]{NataliaL2}, while we sketch the proof here for completeness.
First, in view of \eqref{rho_cond}, Theorem~\ref{theo_beta_repr} implies that
the representation \eqref{eq:newrepresentation} of the discrete operator $\delta_t^\alpha$
of \eqref{eq:defdelta}
satisfies
 \eqref{eq:condition_kappa} with $\beta:=1-2^{-\alpha}$.
Hence,  $\delta_t^\alpha$ is associated with a product of two inverse-monotone matrices as follows.
Set $\delta_t^\alpha U^m_*=F^m:=(\tau_1/ t_m)^{\gamma+1}$ $\forall\,m\ge1$ subject to $U^0_*=0$.
Now  \eqref{eq:newrepresentation}\ yields
$\vec{F}=A_1\vec{V}_*$ with $\vec{V}_*=A_2\vec{U}_*$, or simply $\vec{F}=A_1 A_2\vec{U}_*$,
where
$A_1$ and $A_2$ are
 $M\times M$ matrices, and the notation of type
 $\vec{F}:=\{F^j\}_{j=1}^M$ is used for the corresponding column vectors.
  Being M-matrices in view of \eqref{eq:condition_kappa} (i.e. diagonally dominant, with non-positive off-diagonal elements),
both $A_1$ and $A_2$ are inverse-monotone, hence the product $A_1A_2$
is also inverse-monotone (i.e. the elements of its inverse are non-negative).
Next, note that the hypotheses in \eqref{eq:mathcalW} are equivalent to
$A_1\vec{W}\lesssim \vec{F}$ and $A_2\vec{|U|}\lesssim\vec{W}$. 
Since $A_1$ and $A_2$ are inverse-monotone,  $\vec{W}\lesssim A_1^{-1}\vec{F}$, and then $\vec{|U|}\le A_2^{-1}\vec{W}\lesssim A_2^{-1}A_1^{-1}\vec{F}=\vec{U}_*$,
or
$|U^m|\lesssim {U}^m_*$ $\forall\,m$.
As Theorem~\ref{theo_beta_repr}  implies that $0\le {U}^m_*\lesssim \mathcal{U}^{m}$, one gets the desired assertion.
\end{proof}

\section{Truncation error of the L2 fractional-derivative operator. Error estimation for
the case without spatial derivatives}\label{sec_trunc}

Following \citep{NataliaSean}, the exact solution will be assumed to have a singularity of type $t^\sigma$ for any fixed  $\sigma \in (0,1) \cup (1,2)$.
As a particular case, this includes $\sigma=\alpha$, which has attracted considerable attention in the literature.

\begin{lemma}[truncation error for $\delta_t^\alpha$]\label{lem_r}
Let $u=u(t)$ satisfy
$\textstyle\bigl|\frac{\dd^l}{\dd t^l}u(t)\bigr|\lesssim 1+t^{\sigma-l}$ for $ l=0,1,2,3$ and $t\in(0,T]$,
 with some $\sigma \in (0,1) \cup (1,2)$.
Then for the truncation error $r^m:=\delta_{t}^{\alpha} u(t_m)-\pt_t^\alpha u(t_m)$
on the temporal grid~\eqref{eq:meshstability}, one has
\beq\label{r_bound_simple}
|r^m|\lesssim  \tau^{\sigma - \alpha} (\tau/t_m)^{\gamma+1}\quad\forall\,m\ge 1,
\quad\mbox{where}\;\;
\textstyle
\gamma:=\min\bigl\{\alpha,\,\,\frac{3-\alpha}{r}+\alpha-\sigma-1\bigr\}.
\eeq
If $\sigma=\alpha$, then \eqref{r_bound_simple} simplifies to
\beq\label{r_bound_simple_alpha}
|r^m|\lesssim   (\tau/t_m)^{\gamma+1}\quad\forall\,m\ge 1,
\quad\mbox{where}\;\;
\textstyle
\gamma:=\min\bigl\{\alpha,\,\,\frac{3-\alpha}{r}-1\bigr\}.
\eeq
\end{lemma}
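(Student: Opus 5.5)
The plan is to write the truncation error via the interpolation error, then split the integral into a local piece near $t_m$, an initial piece near $0$, and a bulk piece in between.

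\textbf{Representation.} Since $\Pi u$ interpolates $u$ at every node, $(u-\Pi u)(t_m)=(u-\Pi u)(0)=0$. Integrating by parts in \eqref{eq:def-Caputoderivative}, exactly as in \eqref{trun_int_B}, gives
\[
\Gamma(1-\alpha)\,r^m=\alpha\int_0^{t_m}(t_m-s)^{-\alpha-1}(\Pi u-u)(s)\,\dd s .
\]
The singular kernel at $s=t_m$ is harmless because $(\Pi u-u)(s)=O(t_m-s)$ there. Write $\chi:=u-\Pi u$. The interpolation error bounds are as follows.
\begin{itemize}
\item On $(0,t_1)$, where $\Pi$ is linear: $|\chi|\lesssim \tau_1^{\sigma}$, and $|\chi(s)|\lesssim \tau_1^\sigma\,(t_1-s)/\tau_1$ near $t_1$.
\item On $(t_{j-1},t_j)$ with $j\ge2$, using the backward-quadratic remainder with $|u'''|\lesssim t^{\sigma-3}$ and $t_{j-2}\simeq t_j$ for $j\ge3$:
\[
|\chi(s)|\lesssim (t_j-s)(s-t_{j-1})(s-t_{j-2})\,t_{j-1}^{\sigma-3}\lesssim (t_j-s)\,\tau_j^2\,t_j^{\sigma-3}.
\]
\item For $j=2$, the points $t_0,t_1,t_2$ see the singularity. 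I use instead $|\chi|\lesssim \tau_1^\sigma$ together with the factor $(t_2-s)/\tau_2$.
\end{itemize}
The terms $1+t^{\sigma-l}$ only improve things, so I keep the $t^{\sigma-l}$ parts.

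\textbf{Splitting.} I split $\int_0^{t_m}$ into $(0,t_2)$, $(t_2,t_m/2)$ and $(t_m/2,t_{m-1})\cup(t_{m-1},t_m)$.

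\emph{Local piece $(t_{m-1},t_m)$.} Using $|\chi|\lesssim(t_m-s)\tau_m^2t_m^{\sigma-3}$, the integral is
\[
\lesssim \tau_m^{3-\alpha}t_m^{\sigma-3}.
\]
For $m\le 2$ I use the direct bound instead, obtaining $\tau_1^{\sigma-\alpha}$.

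\emph{Piece $(t_m/2,t_{m-1})$.} Here $t_j\simeq t_m$, and summing $(t_m-s)^{-\alpha-1}(t_j-s)$ gives the same order $\tau_m^{3-\alpha}t_m^{\sigma-3}$.

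Then I use \eqref{eq:meshstability}: $\tau_m\simeq \tau^{1/r}t_m^{1-1/r}$, writing $\tau:=\tau_1$. This gives
\[
\tau_m^{3-\alpha}t_m^{\sigma-3}\simeq \tau^{(3-\alpha)/r}\,t_m^{\sigma-\alpha-(3-\alpha)/r}
=\tau^{\sigma-\alpha}(\tau/t_m)^{(3-\alpha)/r+\alpha-\sigma}.
\]
This is bounded by $\tau^{\sigma-\alpha}(\tau/t_m)^{\gamma+1}$ because $\gamma+1\le (3-\alpha)/r+\alpha-\sigma$ and $\tau/t_m\le1$.

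\emph{Initial piece $(0,t_2)$, for $m\ge3$.} Here $(t_m-s)^{-\alpha-1}\simeq t_m^{-\alpha-1}$, so the contribution is
\[
\lesssim t_m^{-\alpha-1}\,\tau^{\sigma+1}=\tau^{\sigma-\alpha}(\tau/t_m)^{\alpha+1}.
\]
This is the source of the cap $\gamma\le\alpha$.

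\emph{Bulk piece $(t_2,t_m/2)$.} Here $(t_m-s)^{-\alpha-1}\simeq t_m^{-\alpha-1}$ and
\[
\int|\chi|\lesssim\sum_j \tau_j^{4}\,t_j^{\sigma-3}\simeq\int_{\tau}^{t_m} \tau^{3/r}\,s^{3-3/r+\sigma-3}\,\dd s .
\]
The exponent is $\sigma-3/r$, so:
\begin{itemize}
\item if $\sigma+1-3/r>0$, the sum is $\simeq \tau^{3/r}t_m^{\sigma+1-3/r}$, and multiplying by $t_m^{-\alpha-1}$ gives $\tau^{\sigma-\alpha}(\tau/t_m)^{3/r+\alpha-\sigma}$;
\item otherwise it is $\lesssim\tau^{\sigma+1}$, possibly times a logarithm in the borderline case, which again gives $(\tau/t_m)^{\alpha+1}$.
\end{itemize}
Since $3/r\ge(3-\alpha)/r$, both outcomes are dominated by $\tau^{\sigma-\alpha}(\tau/t_m)^{\gamma+1}$. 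The borderline logarithm should be absorbed by the strict gap $3/r>(3-\alpha)/r$, or by $\alpha+1$ versus $\gamma+1$ when needed; I must double-check that this absorption works.

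\textbf{Main obstacle.} The delicate step is the first intervals, $j=1,2$, and the small values $m=1,2,3$. There the quadratic interpolation straddles the singularity at $0$, and the kernel $(t_m-s)^{-\alpha-1}$ is not uniformly comparable to $t_m^{-\alpha-1}$. I will handle these by direct bounds using $|u'|\lesssim t^{\sigma-1}$ and $|u''|\lesssim t^{\sigma-2}$. For example, on $(t_1,t_2)$ I bound $|\chi|$ by $(t_2-s)\sup|u'-(\Pi u)'|$, with a similar bound near $t_1$. This gives $|r^m|\lesssim\tau^{\sigma-\alpha}$ for $m\le3$, which matches \eqref{r_bound_simple} since $t_m\simeq\tau$ there. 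Finally, setting $\sigma=\alpha$ yields \eqref{r_bound_simple_alpha}.
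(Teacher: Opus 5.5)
Your route is the paper's. You use the integration-by-parts representation \eqref{trun_int} in terms of $\chi=u-\Pi u$ (your sign is flipped, which is harmless), the cubic-remainder bound for $j\ge3$, a first-order bound on the first two intervals, and a split of the integral into initial, bulk and local pieces. Your explicit treatment of the bulk and local pieces is correct. That includes the borderline case $\sigma+1=3/r$: the logarithm is indeed absorbed by the gap $\alpha/r$ between the exponents $3/r+\alpha-\sigma$ and $(3-\alpha)/r+\alpha-\sigma$.

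The gap is your claim that the terms $1+t^{\sigma-l}$ ``only improve things''. Dropping the constant is legitimate whenever $\sigma-l<0$, i.e.\ for $l=2,3$, and for $l=1$ when $\sigma<1$. For $\sigma\in(1,2)$ and $l=1$, however, the constant dominates near $t=0$: the hypothesis gives only $|u'|\lesssim1$, not $|u'|\lesssim t^{\sigma-1}$ (take $u=t+t^\sigma$). Your bounds $|\chi|\lesssim\tau^\sigma$ and $|\chi(s)|\lesssim\tau^{\sigma-1}(t_j-s)$ on $(t_{j-1},t_j)$, $j=1,2$, are justified precisely through $|u'|\lesssim t^{\sigma-1}$, and they drive both the initial piece and the small-$m$ cases. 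With the correct bound $|u'|\lesssim1$, your first-order argument yields only $|\chi|\lesssim\tau$ on $(0,t_2)$. That gives, e.g., $|r^1|\lesssim\tau^{1-\alpha}$ instead of $\tau^{\sigma-\alpha}$, and an initial-piece contribution $\tau^{1-\alpha}(\tau/t_m)^{\alpha+1}$ instead of $\tau^{\sigma-\alpha}(\tau/t_m)^{\alpha+1}$. Since $\tau^{1-\sigma}\gg1$, \eqref{r_bound_simple} does not follow.

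The missing idea, which is how the paper handles it, is that both $\Pi_{1,1}$ on $(0,t_1)$ and $\Pi_{2,2}$ on $(t_1,t_2)$ reproduce linear functions. Hence $\chi=\tilde u-\Pi\tilde u$ with $\tilde u(t):=u(t)-t\,u'(0)$.
\begin{itemize}
\item Here $u'(0)$ is well defined because $|u''|\lesssim 1+t^{\sigma-2}$ is integrable when $\sigma>1$.
\item Then $|\tilde u'(t)|\le\int_0^t|u''|\lesssim t^{\sigma-1}$.
\item Run your argument with $\tilde u$ in place of $u$: since $\chi(t_j)=0$, one has $|\chi(s)|\le(t_j-s)\sup_{(s,t_j)}|\tilde u'-(\Pi\tilde u)'|$, and $|(\Pi\tilde u)'|\lesssim\tau^{\sigma-1}$ because $t_2\simeq\tau$.
\end{itemize}
This gives \eqref{chi0}, and the rest of your proof then goes through.

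A minor point: the threshold ``$m\le3$'' for the direct treatment should be a fixed $m_0\simeq1$. Choose it so that $t_2\le t_m/2\le t_{m-1}$ for all $m>m_0$; then your three pieces are well defined and $t_j\simeq t_m$ on the last one. For $m\le m_0$ one has $t_m\simeq\tau$, and your direct bound $|r^m|\lesssim\tau^{\sigma-\alpha}$ suffices.
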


\begin{proof}
The proof is along the lines of \cite[Lemma~3.1]{NataliaSean}, where the truncation error of the L1 method was estimated, while now we extend the latter proof to the case of the higher-order L2 method.
First, $r^m$ is represented as $r^m=\pt_t^\alpha(-\chi)(t_m)$ via the function $\chi := u - \Pi u$, where $\Pi u(t)$ is the interpolant of $u(t)$ used in the definition \eqref{eq:defdelta} of $\delta_t^\alpha$.
Combining $(\Pi u-u)'(s)=-\chi'(s)$ with \eqref{eq:def-Caputoderivative} and \eqref{eq:defdelta}
and then integrating by parts yields
\beq\label{trun_int}
\Gamma(1-\alpha)\,r^m
=\int_0^{t_m}\!\!\!(t_m-s)^{-\alpha}\,\{-\chi'(s)\}\,\dd s
=\alpha
\int_0^{t_m}
\!\!\!(t_m-s)^{-\alpha-1}\chi(s)\,\dd s.
\eeq
Next,
for $s\in(t_{j-1},t_j)$ with $3\le j\le m$ we recall a standard quadratic interpolation error bound
$|\chi(s)|\lesssim (t_j-s)(s-t_{j-1})(s-t_{j-2})\,t_{j-2}^{\sigma-3}$. Combining the latter with
$t_{j-2}\simeq s \simeq t_j$ (which follows from \eqref{eq:meshstability}, as elaborated in \cite[Remark~2.1]{NataliaSean}) and
$\tau_j\lesssim\tau^{1/r}s^{1-1/r}$,
 (also, in view of \eqref{eq:meshstability}), one gets
\beq\label{chi1}
|\chi(s)|
\lesssim
\min \bigl\{\tau_j^3,\,(t_m-s)\,\tau_j^2\bigr\}\, s^{\sigma-3}
\lesssim \min\bigl\{
\tau^{3/r}s^{\sigma-3/r},\,
(t_m-s)\, \tau^{2/r}s^{\sigma-1-2/r}
\bigr\}.
\eeq
Additionally, we claim that
\beq\label{chi0}
|\chi(s)|\lesssim (t_j-s)\,(s^{\sigma-1}+\tau^{\sigma-1})\quad\mbox{for~}s\in(t_{j-1},t_j),\;\;j=1,2,
\eeq
(for $j=1$, i.e. on $(0,t_1)$, where $\Pi u$ is a linear interpolant of $u$, this bound can be found in \cite[(3.6)]{NataliaSean}).
To prove \eqref{chi0} for $\sigma\in(0,1)$, note that $\chi(t_j)=0$ implies $|\chi(s)|\le (t_j-s)\,\sup_{(s,t_j)}|\chi'|$, where
$\chi'=u'+(\Pi u)'$ with $|u'(s)|\lesssim s^{\sigma-1}$ and $|(\Pi u)'(s)|\lesssim \tau^{\sigma-1}$ (in view of $t_2\simeq t_1=\tau$).
For $\sigma\in(1,2)$, note that $\chi=\tilde u-\Pi \tilde u$, where $\tilde u(t):=u(t)-tu'(0)$. Now
a similar argument yields \eqref{chi0}
since  $\tilde u'(0)=0$, so $|\tilde u'(t)|\le \int_0^t|\tilde u''(s)|\,\dd s\lesssim t^{\sigma-1}$
(in contrast to $|u'(t)|\lesssim 1+t^{\sigma-1}$), so we again get \eqref{chi0}.
Finally, note that for $j=2$, \eqref{chi0} implies \eqref{chi1} (as $s\in(t_1,t_2)$ implies $s\simeq\tau$), so we shall use \eqref{chi1} for $s>\tau$
and
\eqref{chi0} for $s<\tau$, exactly as in the proof of \cite[Lemma~3.1]{NataliaSean}.
Imitating the latter proof, split $\int_0^{t_m}$ in \eqref{trun_int} into a sum of integrals on
$(0,t_1)$, $(t_1,t_m^*)$, and $(t_m^*,t_m)$, where  $t_m^*\simeq t_m-\tau_m$ for $m\ge 2$
(see \cite{NataliaSean} for further details), which leads to the bound
\begin{align*}
|r^m|&{}\lesssim
\tau^{\sigma-\alpha}(\tau/t_m)^{\alpha +1}+
 t_m^{\sigma-\alpha}\,
\bigr[(\tau/t_m)^{\sigma+1}
+
(\tau/t_m)^{(3-\alpha)/r}\bigr]
\\&{}\lesssim
\tau^{\sigma-\alpha}
\bigl[
(\tau/t_m)^{\alpha +1}+
(\tau/t_m)^{(3-\alpha)/r+\alpha-\sigma}
\bigr].
\end{align*}
As here $(\tau/t_m)\le 1$,  the desired assertion \eqref{r_bound_simple} and its particular case \eqref{r_bound_simple_alpha} follow.
\end{proof}

Now we are prepared to estimate
the error for
a simplest fractional-derivative problem without spatial derivatives. Consider
   \begin{subequations}\label{eq:simplest}
        \begin{align}
        \pt_{t}^{\alpha} u(t) = f(t)\qquad &\hbox{for}\ t\in(0,T],  \qquad\ \ u(0)=u_{0},\label{eq:simplestode}\\
        \delta_{t}^{\alpha}U^{m} = f(t_{m})\qquad &\hbox{for}\ m=1,\dots,M,\quad U^{0} =u_{0}.
        \label{eq:simplestdisode}
        \end{align}
    \end{subequations}

\begin{theorem}[pointwise-in-time error bound]\label{the_er_ode}
Let the solution $u$ of \eqref{eq:simplestode} satisfy
$\textstyle\bigl|\frac{\dd^l}{\dd t^l}u(t)\bigr|\lesssim 1+t^{\sigma-l}$ for $ l=0,1,2,3$ and $t\in(0,T]$,
 with some $\sigma \in (0,1) \cup (1,2)$.
Additionally, let the temporal mesh in \eqref{eq:simplestdisode} satisfy \eqref{eq:meshstability}  for some $r\geq 1$, and also \eqref{rho_cond}
with a sufficiently small $\eps$
$\forall\,j\ge K$ for some fixed $1\le K\lesssim 1$ (i.e. $K$ may be sufficiently large independently of $M$).
Then there exists a unique solution of \eqref{eq:simplestdisode} and $\forall\,m\ge 1$ one has
\begin{align}
\label{E_cal_m}
|u(t_m)-U^m|&\lesssim {\mathcal E}^m\!:=\!
\left\{\!\!\begin{array}{ll}
M^{-\nu r}\,t_m^{\alpha-1}&\mbox{if~}1\le r<\frac{3-\alpha}{\nu},\\[0.2cm]
 M^{-(3-\alpha)}\,t_m^{\alpha-1}\,\ell_0
&\mbox{if~}r=\frac{3-\alpha}{\nu}\ge1,\\[0.1cm]
M^{-(3-\alpha)}\,t_m^{\sigma-\frac{3-\alpha}{r}}
&\mbox{if~}r>\frac{3-\alpha}{\nu},
\end{array}\right.    
\intertext{where $\nu:=1+\sigma-\alpha$, $\ell_0=\ell_0(t_m)=[1+\ln(t_m/t_1)]$.
If $\sigma=\alpha$, then \eqref{E_cal_m} simplifies to}
\label{E_cal_m_alpha}
|u(t_m)-U^m|&\lesssim {\mathcal E}_\alpha^m\!:=\!
\left\{\!\!\begin{array}{ll}
M^{- r}\,t_m^{\alpha-1}&\mbox{if~}1\le r<3-\alpha,\\[0.2cm]
 M^{-(3-\alpha)}\,t_m^{\alpha-1}\,\ell_0
&\mbox{if~}r=3-\alpha,\\[0.1cm]
M^{-(3-\alpha)}\,t_m^{\alpha-\frac{3-\alpha}{r}}
&\mbox{if~}r>3-\alpha.
\end{array}\right.    
\end{align}
\end{theorem}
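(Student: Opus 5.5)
We combine the truncation error bound of Lemma~\ref{lem_r} with the stability result of Theorem~\ref{th-euniformgrid}, applied to the error $e^m:=u(t_m)-U^m$.

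\emph{Existence and uniqueness.} The system \eqref{eq:simplestdisode} is lower triangular in the unknowns $U^1,\dots,U^M$. Its diagonal entries are $\delta_t^\alpha\phi^m(t_m)=\A_m/(\tau_m^\alpha\Gamma(2-\alpha))$, and by \eqref{AB_def_aa} one has $\A_m=1+\hat b_{m-1}>0$. This positivity used only the sign of $\hat b_{m-1}$, so it holds on any mesh, not only under \eqref{rho_cond}. Hence the system is uniquely solvable.

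\emph{Error equation.} Subtracting \eqref{eq:simplestdisode} from \eqref{eq:simplestode} evaluated at $t=t_m$ gives $e^0=0$ and $\delta_t^\alpha e^m=\delta_t^\alpha u(t_m)-f(t_m)=r^m$. By Lemma~\ref{lem_r}, with $\tau=\tau_1\simeq M^{-r}$, we get $|\delta_t^\alpha e^m|\lesssim \tau^{\sigma-\alpha}(\tau/t_m)^{\gamma+1}$, where $\gamma=\min\{\alpha,\frac{3-\alpha}{r}+\alpha-\sigma-1\}$. We apply Theorem~\ref{th-euniformgrid} to $e^m/\tau^{\sigma-\alpha}$. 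Its mesh hypotheses coincide with those assumed here, so it yields
\[
|e^m|\lesssim \tau^{\sigma-\alpha}\,\ell_\gamma\,\tau\,t_m^{\alpha-1}(\tau/t_m)^{\min\{0,\gamma\}} .
\]

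\emph{Reading off the three regimes.} The sign of $\gamma$ is governed by the comparison of $\frac{3-\alpha}{r}$ with $1+\sigma-\alpha=\nu$. Note that $\tau^{\sigma-\alpha}\tau=\tau^{\nu}\simeq M^{-\nu r}$.
\begin{itemize}
\item If $r<\frac{3-\alpha}{\nu}$, then $\gamma>0$, and the bound becomes $M^{-\nu r}t_m^{\alpha-1}$.
\item If $r=\frac{3-\alpha}{\nu}$, then $\gamma=0$ and $\ell_0$ appears. Since $\nu r=3-\alpha$, this gives $M^{-(3-\alpha)}t_m^{\alpha-1}\ell_0$.
\item If $r>\frac{3-\alpha}{\nu}$, then $\gamma=\frac{3-\alpha}{r}-\nu<0$, and the bound is $\tau^{\nu+\gamma}t_m^{\alpha-1-\gamma}=\tau^{(3-\alpha)/r}t_m^{\sigma-(3-\alpha)/r}$. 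Using $\tau^{1/r}\simeq M^{-1}$, this equals $M^{-(3-\alpha)}t_m^{\sigma-\frac{3-\alpha}{r}}$.
\end{itemize}
Here we used $\alpha-1-\gamma=\sigma-\frac{3-\alpha}{r}$. The case $\sigma=\alpha$ follows by substituting $\nu=1$, which gives \eqref{E_cal_m_alpha}.

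In the first regime, when $\gamma$ equals $\alpha$ rather than the second expression in the minimum, the bound is unchanged, since $\min\{0,\gamma\}=0$ either way. There is no real obstacle in this argument. The only point needing care is the exponent bookkeeping in the third regime and confirming that the thresholds on $r$ match the sign changes of $\gamma$. All substantive work was already done in Lemma~\ref{lem_r} and Theorem~\ref{th-euniformgrid}.
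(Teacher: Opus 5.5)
Your proposal is correct and follows the paper's proof essentially step for step. It uses the error equation $\delta_t^\alpha e^m=r^m$, the truncation bound of Lemma~\ref{lem_r}, Theorem~\ref{th-euniformgrid} applied with the extra factor $\tau^{\sigma-\alpha}$, and the same three-case exponent bookkeeping; your unique-solvability argument, via the positive diagonal entries $\A_m=1+\hat b_{m-1}$ of the lower-triangular system, is a small addition that the paper leaves implicit.
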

\begin{proof}
From \eqref{eq:simplest}, for $e^m:=U^m-u(t_m)$, one immediately gets
$\delta_t^\alpha e^m=r^m$, where for the truncation error
$r^m=\delta_{t}^{\alpha} u(t_m)-\pt_t^\alpha u(t_m)$, Lemma~\ref{lem_r}
yields the bound \eqref{r_bound_simple}.
Now, the bound~\eqref{eq:stability} in Theorem~\ref{th-euniformgrid}
gives
$|e^m|\lesssim
\widetilde\U^m
:=\tau^{\sigma - \alpha}\,\U^m$, i.e.
$\widetilde\U^m:=
\ell_\gamma\tau^\nu t_m^{\alpha-1}(\tau/t_m)^{\min\{0,\,\gamma\}}$, and it suffices to show that $\widetilde\U^m\simeq {\mathcal E}^m$.
As $\alpha>0$, one has $\min\{0,\,\gamma\}=\min\{0,\,\frac{3-\alpha}{r}-\nu\}$, and
$\ell_\gamma\neq 1$ iff $r\neq\frac{3-\alpha}{\nu} $ (which is equivalent to $\gamma\neq 0$).
Now, $r<\frac{3-\alpha}{\nu}$
yields
$\min\{0,\,\gamma\}=0$, so $\widetilde\U^m\simeq \tau^\nu t_m^{\alpha-1}\simeq M^{-\nu r}t_m^{\alpha-1}\simeq {\mathcal E}^m$.
Similarly,  $r=\frac{3-\alpha}{\nu}$ yields $\widetilde\U^m
\simeq \ell_0 M^{-\nu r}t_m^{\alpha-1}\simeq {\mathcal E}^m$.
Finally,  $r>\frac{3-\alpha}{\nu}$ yields
 $\min\{0,\,\gamma\}=\frac{3-\alpha}{r}-\nu<0$, so
 $\widetilde\U^m\simeq \tau^\nu t_m^{\alpha-1}(\tau/t_m)^{\frac{3-\alpha}{r}-\nu}\simeq \tau^{\frac{3-\alpha}{r}}t_m^{\sigma-\frac{3-\alpha}{r}}$
so again $\widetilde\U^m\simeq M^{-(3-\alpha)} t_m^{\sigma-\frac{3-\alpha}{r}}\simeq {\mathcal E}^m$.
If $\sigma=\alpha$, clearly ${\mathcal E}^m$ simplifies to ${\mathcal E}^m_\alpha$.
\end{proof}

\begin{corollary}[convergence in positive time]\label{cor_t1_con}
For $t_m\gtrsim 1$ the error bound in \eqref{E_cal_m} involves
${\mathcal E}^m\simeq M^{-\min\{\nu r,\,3-\alpha\}}$ if $r\neq\frac{3-\alpha}{\nu}$
and ${\mathcal E}^m\simeq M^{-(3-\alpha)}\ln M$ otherwise.
Hence, $r>\frac{3-\alpha}{\nu}$ yields the optimal convergence rate with  ${\mathcal E}^m\simeq M^{-(3-\alpha)}$ in positive time.
\end{corollary}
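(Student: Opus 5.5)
This is a direct specialisation of the bound \eqref{E_cal_m} in Theorem~\ref{the_er_ode} to the regime $t_m\gtrsim 1$. No new stability or truncation analysis is needed: we only simplify ${\mathcal E}^m$ in each of its three cases, using $t_m\simeq 1$. Here $t_m\le T\lesssim 1$ always holds, so $t_m\gtrsim1$ gives $t_m\simeq1$.

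\emph{Case $1\le r<\frac{3-\alpha}{\nu}$.} Here ${\mathcal E}^m=M^{-\nu r}t_m^{\alpha-1}\simeq M^{-\nu r}$, since $t_m^{\alpha-1}\simeq 1$ for $t_m\simeq1$ (any fixed power of $t_m$ is $\simeq1$). In this case $\nu r<3-\alpha$, so $\nu r=\min\{\nu r,3-\alpha\}$.

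\emph{Case $r>\frac{3-\alpha}{\nu}$.} Here ${\mathcal E}^m=M^{-(3-\alpha)}t_m^{\sigma-\frac{3-\alpha}{r}}\simeq M^{-(3-\alpha)}$. Now $\nu r>3-\alpha$, so $3-\alpha=\min\{\nu r,3-\alpha\}$, which again matches the stated form.

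\emph{Case $r=\frac{3-\alpha}{\nu}$.} Here ${\mathcal E}^m=M^{-(3-\alpha)}t_m^{\alpha-1}\ell_0$ with $\ell_0=1+\ln(t_m/t_1)$. By \eqref{eq:meshstability}, $t_1=\tau_1\simeq M^{-r}$, so $\ln(t_m/t_1)=\ln t_m-\ln\tau_1=r\ln M+O(1)$. Hence $\ell_0\simeq\ln M$ for $M\ge2$, because $r\ge1$ is fixed, and therefore ${\mathcal E}^m\simeq M^{-(3-\alpha)}\ln M$.

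Finally, the rate $M^{-\min\{\nu r,3-\alpha\}}$ equals $M^{-(3-\alpha)}$ exactly when $\nu r\ge 3-\alpha$. In the borderline case a logarithm appears, so the strict inequality $r>\frac{3-\alpha}{\nu}$ gives the clean optimal rate $M^{-(3-\alpha)}$. This rate is optimal because it coincides with the consistency order $3-\alpha$ of $\delta_t^\alpha$.

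There is no real obstacle. The only point needing care is the logarithmic case, where $\ell_0$ has to be tracked through the mesh relation $\tau_1\simeq M^{-r}$ rather than treated as a constant.
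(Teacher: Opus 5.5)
Your proposal is correct and takes essentially the paper's approach. The paper gives no separate proof: the corollary is read off from \eqref{E_cal_m} by setting $t_m\simeq 1$ in each of the three cases, and in the borderline case using $t_1\simeq M^{-r}$ to get $\ell_0\simeq\ln M$, exactly as you do.
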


\begin{corollary}[global convergence]\label{cor_glob_con}
The global-in-time version of \eqref{E_cal_m} is
$\max_{t_m\le T}|u(t_m)-U^m|\lesssim
M^{-\min\{\sigma r,\,3-\alpha\}}$.
Hence, $r\ge \max\bigl\{\frac{3-\alpha}{\sigma},\,1\bigr\}$ yields the optimal global convergence rate $M^{-(3-\alpha)}$.
\end{corollary}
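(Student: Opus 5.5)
The corollary will follow directly from Theorem~\ref{the_er_ode}, by evaluating the pointwise bound $\mathcal E^m$ of \eqref{E_cal_m} at the worst time $t_m$. No new stability or truncation analysis is needed. The plan is to split into the three regimes of $r$ in \eqref{E_cal_m} and, in each, maximise over $t_1\le t_m\le T$, using $t_1=\tau_1\simeq M^{-r}$ from \eqref{eq:meshstability}.

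\emph{Case $1\le r<\frac{3-\alpha}{\nu}$.} Here $\mathcal E^m=M^{-\nu r}t_m^{\alpha-1}$. Since $\alpha-1<0$, this is maximised at $t_m=t_1\simeq M^{-r}$. The maximum is $M^{-\nu r}M^{-r(\alpha-1)}=M^{-r(\nu+\alpha-1)}=M^{-\sigma r}$. In this regime $\nu r<3-\alpha$, and since $\sigma<\nu$ we get $\sigma r<3-\alpha$, so this equals $M^{-\min\{\sigma r,3-\alpha\}}$.

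\emph{Case $r=\frac{3-\alpha}{\nu}$.} The factor $\ell_0$ equals $1$ at $t_m=t_1$, and otherwise $t_m^{\alpha-1}\ell_0(t_m)$ is still dominated by its value at $t_1$. To see this, write $t_m=t_1 x$ with $x\ge1$ and use that $x^{\alpha-1}(1+\ln x)$ is bounded on $[1,\infty)$ since $\alpha<1$. Hence the maximum is again $\simeq M^{-\sigma r}$, and as before $\sigma r<\nu r=3-\alpha$.

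\emph{Case $r>\frac{3-\alpha}{\nu}$.} Here $\mathcal E^m=M^{-(3-\alpha)}t_m^{\sigma-\frac{3-\alpha}{r}}$. If $\sigma r\ge 3-\alpha$, the exponent is nonnegative, the maximum is attained at $t_m\le T$, and it is $\simeq M^{-(3-\alpha)}$. If $\sigma r<3-\alpha$, the exponent is negative, so the maximum is at $t_1\simeq M^{-r}$. It equals $M^{-(3-\alpha)}M^{-r\sigma+(3-\alpha)}=M^{-\sigma r}$. In both subcases we get $M^{-\min\{\sigma r,3-\alpha\}}$.

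Combining the cases gives the global bound. The optimality statement is then immediate: $M^{-(3-\alpha)}$ is attained iff $\sigma r\ge 3-\alpha$, together with the standing assumption $r\ge1$. The only mildly delicate point is the logarithmic case, and there the boundedness of $x^{\alpha-1}\ln x$ settles it.
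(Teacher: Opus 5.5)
Your proof is correct and follows essentially the same route as the paper: you evaluate $\mathcal{E}^m$ from \eqref{E_cal_m} at the worst node using $t_1\simeq M^{-r}$, so that $\sigma r\ge 3-\alpha$ gives $M^{-(3-\alpha)}$, while otherwise the maximum sits at $t_1$ with value $\simeq M^{-\sigma r}$. Your check that $x^{\alpha-1}(1+\ln x)$ is bounded on $[1,\infty)$ is a small but welcome refinement, since the paper simply asserts that $\max_m\mathcal{E}^m\lesssim\mathcal{E}^1$ also in the logarithmic case.
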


\begin{proof}
If $\sigma r\ge 3-\alpha$, one gets the case $r>\frac{3-\alpha}{\nu}$ in \eqref{E_cal_m},
so ${\mathcal E}^m\lesssim M^{-(3-\alpha)}$ $\forall\, m\ge 1$.
Otherwise,  ${\mathcal E}^m$ involves a negative power of $t_m$, so $\max_{m\le M}{\mathcal E}^m\lesssim{\mathcal E}^1$,
and a calculation (using $t_1\simeq M^{-r}$) shows that ${\mathcal E}^1\lesssim M^{-\sigma r}$ in each of the three cases in \eqref{E_cal_m}.
\end{proof}

\section{Error estimation for linear subdiffusion equations}\label{sec_err_linear}

\subsection{Error bounds for semi-discretizations in time}
Now we are prepared to address the semi-discretizations of our problem \eqref{eq:problem} in time using the discrete fractional-derivative operator $\delta_{t}^{\alpha}$ from \eqref{eq:defdelta}:
\begin{align}\label{eq:defsemidis}
  \delta_{t}^{\alpha} U^{m} + \mathcal{L} U^{m} = f(\cdot,t_{m})\  \hbox{in}\  \Omega,\quad U^{m}=0 \ \hbox{on} \ \pt\Omega \quad \forall \ m=1,\dots,M,\quad U^{0}=u_{0}.
\end{align}
We start with a version of Theorem~\ref{th-euniformgrid} for parabolic case.

\begin{lemma}[stability for parabolic case] \label{le:stable-semi}
Let the temporal mesh satisfy \eqref{eq:meshstability}  for some $r\geq 1$, and also \eqref{rho_cond}
with a sufficiently small $\eps$
$\forall\,j\ge K$ for some fixed $1\le K\lesssim 1$ (i.e. $K$ may be sufficiently large independently of $M$).
  Then,   given $\gamma\in\mathbb{R}$, the following is true for $\{U^{j}\}_{j=0}^{M}$ from \eqref{eq:defsemidis}:
     \begin{align}\label{eq:semidis_stability}
        \left.
        \begin{array}{c}
           \|f(\cdot,t_{m})\|_{L_2(\Omega)} \lesssim (\tau_{1}/t_{m})^{\gamma+1} \;\\[4pt]
            \;\forall m\geq 1,\quad U^{0}=0
        \end{array}\!\!\!
        \right\}
         \;\;\Rightarrow\;\; \|U^{m}\|_{L_2(\Omega)}  \lesssim \mathcal{U}^{m}(\tau_{1};\gamma)=\ell_{\gamma}\, \tau_{1}\,t_{m}^{\alpha-1} (\tau_{1}/t_{m})^{\min\{0,\gamma\}},
    \end{align}
where $\ell_{\gamma}=\ell_{\gamma}(t_{m}):=1+\ln(t_{m}/\tau_{1})$ for $\gamma=0$, and $\ell_{\gamma}:=1$ otherwise.
\end{lemma}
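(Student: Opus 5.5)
The plan is to reduce the parabolic stability bound to the scalar stability result, Corollary~\ref{co-kW}, via an energy-type argument. This mirrors the treatment of the forward L2 method in \citep{NataliaL2}. The representation \eqref{eq:newrepresentation} with $\beta=1-2^{-\alpha}$ is the key tool. We write $\delta_t^\alpha U^m=\sum_{j=0}^m\kappa_{m,j}V^j$ with $V^j=(U^j-\beta U^{j-1})/(1-\beta)$. Theorem~\ref{theo_beta_repr} gives $\kappa_{m,m}>0$ and $\kappa_{m,j}\le0$ for $j<m$. The grid is assumed to satisfy \eqref{rho_cond} only for $j\ge K$. The first $K\lesssim1$ steps will therefore be peeled off exactly as in the proof of Theorem~\ref{th-euniformgrid}: a Lemma~\ref{lem2}-type bound on $U^1,\dots,U^K$ (now in $L_2(\Omega)$, using coercivity of $\mathcal L$), followed by a shifted-grid argument as in Corollary~\ref{cor_lem1}. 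So the main work is the case $K=1$.

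For $K=1$, I would test \eqref{eq:defsemidis} with $V^m$ in $L_2(\Omega)$. Write $\langle\cdot,\cdot\rangle$ for the $L_2(\Omega)$ inner product and $\|\cdot\|$ for its norm. Since $U^m=(1-\beta)V^m+\beta U^{m-1}$, I aim to show $\langle\mathcal L U^m,V^m\rangle\ge0$, or at least that it can be discarded. Discarding it gives
\[
\kappa_{m,m}\|V^m\|^2\le\sum_{j<m}|\kappa_{m,j}|\,\|V^j\|\,\|V^m\|+\|f^m\|\,\|V^m\|.
\]
Hence $W^m:=\|V^m\|$ satisfies $\sum_j\kappa_{m,j}W^j\le\|f^m\|\lesssim(\tau_1/t_m)^{\gamma+1}$, with $W^0=0$. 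Also $\frac{\|U^m\|-\beta\|U^{m-1}\|}{1-\beta}\le\|V^m\|=W^m$ by the triangle inequality. These are exactly the two hypotheses of Corollary~\ref{co-kW}, which yields $\|U^m\|\lesssim\U^m(\tau_1;\gamma)$.

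The obstacle is the sign of $\langle\mathcal L U^m,V^m\rangle$. Writing $\|\cdot\|_{\mathcal L}^2:=\langle\mathcal L\cdot,\cdot\rangle$, this term equals $\frac{1}{1-\beta}\bigl(\|U^m\|_{\mathcal L}^2-\beta\langle\mathcal L U^m,U^{m-1}\rangle\bigr)$. By Cauchy--Schwarz this is only bounded below by $\frac{1}{1-\beta}\|U^m\|_{\mathcal L}\bigl(\|U^m\|_{\mathcal L}-\beta\|U^{m-1}\|_{\mathcal L}\bigr)$, which need not be nonnegative. Thus a pure $L_2$ energy estimate fails, and one must exploit the stronger condition of Lemma~\ref{lem_cond_aaa}. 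I would first try to absorb the $\beta$-cross term $\frac{\beta}{2(1-\beta)}\|U^{m-1}\|_{\mathcal L}^2$ into the off-diagonal contribution of the previous time level. Inequality \eqref{AB_cond_stronger2}, equivalently $\beta^2\kappa_{m-1,m-1}\kappa_{m,m}\le|\kappa_{m,m-1}|^2$ by Remark~\ref{rem_kappa}, is precisely the discriminant-type condition that allows this absorption.

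Concretely, the plan is to work with a modified quantity $W^m$ combining $\|V^m\|$ with a weighted $\|U^m\|_{\mathcal L}$ term. Alternatively, one can expand $U^m$ in eigenfunctions of $\mathcal L$ with eigenvalues $\lambda\ge0$ and treat each mode via the scalar operator $\delta_t^\alpha+\lambda$. In each mode one shows that $\delta_t^\alpha+\lambda$ still admits a representation of the form \eqref{eq:newrepresentation} with coefficients satisfying \eqref{eq:condition_kappa} and diagonal entries at least $\kappa_{m,m}$, uniformly in $\lambda$. The only new entry is $\kappa_{m,m-1}-\lambda\beta$, and here \eqref{AB_cond_stronger2} should guarantee that a factorization into two M-matrices persists. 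Adding $\lambda$ only enlarges the diagonal of $A_1A_2$, so the M-matrix structure of $A_1$ and $A_2$ is the crux. Summing over modes via Parseval and invoking Corollary~\ref{co-kW} with uniform constants would complete the proof. The step I expect to be hardest is this uniform-in-$\lambda$ factorization/absorption; the remaining steps follow from the cited results.
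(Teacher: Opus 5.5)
Your main line (test \eqref{eq:defsemidis} with $V^m$, build a modified $W^m$ from $\|V^m\|$ and a weighted energy term, and use \eqref{AB_cond_stronger2} on the cross term) is the paper's route. But you stop exactly at the step that constitutes the proof, and neither concrete mechanism you offer would close it.

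\textbf{The missing construction.} Splitting the cross term by Young's inequality, as you first suggest, leaves $\frac{\beta}{2(1-\beta)}\langle\mathcal L U^{m-1},U^{m-1}\rangle$ on the right. This term contains nothing from level $m$, so it cannot be bounded by a multiple of $W^mW^{m-1}$. You therefore cannot divide by $W^m$ to reach the \emph{linear} inequality $\sum_j\kappa_{m,j}W^j\le\|f^m\|_{L_2(\Omega)}$ that Corollary~\ref{co-kW} needs. The paper keeps the product structure instead. It sets
\[
W^m:=\Bigl(\|V^m\|^2_{L_2(\Omega)}+\tfrac{\kappa_{m,m}^{-1}}{1-\beta}\langle\mathcal L U^m,U^m\rangle\Bigr)^{1/2},
\]
with the weight chosen so that $\kappa_{m,m}\|V^m\|^2+\frac{1}{1-\beta}\langle\mathcal LU^m,U^m\rangle=\kappa_{m,m}(W^m)^2$ exactly. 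The leftover term $-\frac{\beta}{1-\beta}\langle\mathcal LU^m,U^{m-1}\rangle$ is moved to the right and grouped with $|\kappa_{m,m-1}|\langle V^m,V^{m-1}\rangle$. A Cauchy--Schwarz inequality in $\mathbb{R}^2$ then bounds this pair by $|\kappa_{m,m-1}|\,W^mW^{m-1}$. This holds precisely when $\frac{\beta}{1-\beta}|\kappa_{m,m-1}|^{-1}\le\frac{1}{1-\beta}(\kappa_{m,m}\kappa_{m-1,m-1})^{-1/2}$, i.e.\ $\beta^2\kappa_{m-1,m-1}\kappa_{m,m}\le|\kappa_{m,m-1}|^2$. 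That is \eqref{AB_cond_stronger2} via Remark~\ref{rem_kappa}; the product $\kappa_{m,m}\kappa_{m-1,m-1}$ appears because $W^{m-1}$ carries its own weight. One then has $\kappa_{m,m}(W^m)^2\le\sum_{j<m}|\kappa_{m,j}|W^mW^j+W^m\|f^m\|$. Dividing by $W^m$ and using $W^m\ge\|V^m\|$ gives both hypotheses of Corollary~\ref{co-kW}.

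\textbf{The mode-wise alternative cannot work.} First, with $U^0=0$ one has $U^m=(1-\beta)\sum_{j=1}^m\beta^{m-j}V^j$. So adding $\lambda U^m$ adds $+\lambda(1-\beta)\beta^{m-j}$ to \emph{every} $\kappa_{m,j}$, not $-\lambda\beta$ to $\kappa_{m,m-1}$ alone, and \eqref{eq:condition_kappa} is destroyed. More fundamentally, any factorization of $\delta_t^\alpha+\lambda$ into two lower-triangular M-matrices would make it inverse-monotone for all $\lambda\ge0$, and this is false. The coefficient $\delta_t^\alpha\phi^{m-2}(t_m)$ of $U^{m-2}$ can be positive: on a uniform mesh with $\alpha=\frac12$ and $m\ge4$, a direct computation gives $\Gamma(\frac12)\,\tau^{1/2}\,\delta_t^\alpha\phi^{m-2}(t_m)\approx0.104$. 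The $(m,m-2)$ entry of $(\delta_t^\alpha+\lambda)^{-1}$ is then $-\lambda^{-2}\delta_t^\alpha\phi^{m-2}(t_m)+O(\lambda^{-3})<0$ for large $\lambda$, and the spectrum of $\mathcal L$ is unbounded. This is why the parabolic case needs an energy argument together with \eqref{AB_cond_stronger2}, not just the comparison principle.

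Your reduction of the case $2\le K\lesssim1$ via Lemma~\ref{lem2} and Corollary~\ref{cor_lem1} matches the paper.
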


\begin{proof}
(i)
First, let us establish the desired result for the case $K=1$.
Then, in view of Theorem~\ref{theo_beta_repr}, the representation \eqref{eq:newrepresentation} of the discrete operator $\delta_t^\alpha$ of \eqref{eq:defdelta} satisfies \eqref{eq:condition_kappa} with $\beta:=1-2^{-\alpha}$, and furthermore, we can employ Corollary~\ref{co-kW}.
  Set
  $V^0:=U^0=0$, and also
  $V^{m}:= \frac{U^{m}-\beta U^{m-1}}{1-\beta}$ and $f^{m}:=f(\cdot,t_{m})$ for $m\ge1$, and then rewrite \eqref{eq:defsemidis} using
  \eqref{eq:newrepresentation},\,\eqref{eq:condition_kappa}
   as
  \begin{align*}
    \kappa_{m,m} V^{m} + \mathcal{L} U^{m} = |\kappa_{m,m-1}| V^{m-1} +\sum_{j=1}^{m-2} |\kappa_{m,j}| V^{j} +f^{m}.
  \end{align*}
  Consider the $L_2(\Omega)$ inner product, denoted by $\langle\cdot,\cdot\rangle$, of the above with $V^{m}$,  Then, using the notation
  \[
        W^{m}:=\sqrt{\| V^{m} \|_{L_{2}(\Omega)}^{2}+\frac{\kappa_{m,m}^{-1}}{1-\beta} \langle \mathcal{L}U^{m},U^{m} \rangle},
  \]
  one gets
  \begin{align}\label{Qm_eq}
    \kappa_{m,m}(W^{m})^{2} -\frac{\beta}{1-\beta} \langle \mathcal{L}U^{m},U^{m-1} \rangle = & 
    |\kappa_{m,m-1}|\, \langle V^{m},V^{m-1} \rangle
    +\sum_{j=1}^{m-2} |\kappa_{m,j}|\, \underbrace{\langle V^{m},V^{j} \rangle}_{\leq W^{m}W^{j}} + \underbrace{\langle V^{m},f^{m} \rangle}_{\leq W^{m}\|f^m\|_{L_{2}(\Omega)}}.
  \end{align}
Here we used $\langle V^m,V^j \rangle\le \| V^m \|_{L_{2}(\Omega)}\| V^{j} \|_{L_{2}(\Omega)}\le W^m\,W^j$, and, similarly,
  $\langle V^{m},f^{m} \rangle\le W^{m}\|f^m\|_{L_{2}(\Omega)}$.
  Next, we claim that (assuming that $\eps$) in \eqref{rho_cond}
is sufficiently small) one has
\begin{align}\label{Qm}
|\langle V^{m},V^{m-1} \rangle|+|\kappa_{m,m-1}|^{-1}\frac{\beta}{1-\beta} |\langle \mathcal{L}U^{m},U^{m-1} \rangle|\le W^{m}\,W^{m-1}.
  \end{align}
  For $m=1$, this is straightforward as
since $V^0=U^0=0$. For $m\ge 2$, to get~\eqref{Qm}, it suffices to check
  $|\kappa_{m,m-1}|^{-1}\frac{\beta}{1-\beta} \le \bigl\{\frac{\kappa_{m,m}^{-1}}{1-\beta}\, \frac{\kappa_{m-1,m-1}^{-1}}{1-\beta}\bigr\}^{1/2}$, or,
  equivalently,
 $\beta^2\,\kappa_{m-1,m-1}\, \kappa_{m,m}\le |\kappa_{m,m-1}|^2$.
  The latter is true assuming $\eps$ is sufficiently small in~\eqref{rho_cond}, in view of
  Lemma~\ref{lem_cond_aaa} combined with Remark~\ref{rem_kappa}.
Now that \eqref{Qm} is proved, combining it with \eqref{Qm_eq} yields
  $\kappa_{m,m}(W^{m})^{2}\le \sum_{j=1}^{m-1} |\kappa_{m,j}| W^m\,W^j +W^m\,\|f^{m}\|_{L_{2}(\Omega)}$, or, equivalently, (in view of $\kappa_{m,j}\le 0$ for $j<m$)
\begin{equation}\label{W_U}
  \sum_{j=1}^{m} \kappa_{m,j} W^j\le \|f^{m}\|_{L_{2}(\Omega)},
  \quad\mbox{where~~}
  W^m\ge \| V^{m} \|_{L_{2}(\Omega)}\ge \frac{\|U^{m}\|_{L_{2}(\Omega)}-\beta \|U^{m-1}\|_{L_{2}(\Omega)}}{1-\beta}.
\end{equation}
  Finally, an application of Corollary~\ref{co-kW} yields the desired assertion \eqref{eq:semidis_stability}.

  (ii)
  To extend the desired result to the case $2\le K\lesssim 1$, we proceed exactly as in the proof of Theorem~\ref{th-euniformgrid}, i.e.
  establish versions of Corollary~\ref{cor_lem1} (in which is it assumed that $U^0=U^1=\cdots=U^K=0$)
  and Lemma~\ref{lem2} (in which it is assumed that $U^m=0$ $\forall\,m\ge K+1$, and one now shows that
 $\|U^m\|_{L_{2}(\Omega)}+  \sqrt{\tau_1^{\alpha}\langle \mathcal{L}U^{m},U^{m} \rangle } \lesssim \tau_1^{\alpha}+\sum_{j=1}^{m-1}\|U^j\|_{L_{2}(\Omega)}\lesssim \tau_1^{\alpha}$  $\forall\,m\le K$) for the parabolic case.
\end{proof}

\begin{theorem}[pointwise-in-time error bound]\label{the_er_semi}
Let the solution $u$ of \eqref{eq:problem} satisfy
$\textstyle\|\pt_t^l u(\cdot,t)\|_{L_2(\Omega)}\lesssim 1+t^{\sigma-l}$ for $ l=0,1,2,3$ and $t\in(0,T]$,
 with some $\sigma \in (0,1) \cup (1,2)$.
Additionally, let the temporal mesh in \eqref{eq:defsemidis} satisfy \eqref{eq:meshstability}  for some $r\geq 1$, and also \eqref{rho_cond}
with a sufficiently small $\eps$
$\forall\,j\ge K$ for some fixed $1\le K\lesssim 1$ (i.e. $K$ may be sufficiently large independently of $M$).
Then there exists a unique solution of \eqref{eq:defsemidis} and $\forall\,m\ge 1$ one has
\begin{equation}
\label{E_cal_m_par}
\|u(t_m)-U^m\|_{L_2(\Omega)}\lesssim {\mathcal E}^m\qquad \forall\,m\ge 1,
\end{equation}
where ${\mathcal E}^m$ is defined in \eqref{E_cal_m}.
If $\sigma=\alpha$, then ${\mathcal E}^m$ simplifies to ${\mathcal E}_\alpha^m$ defined in \eqref{E_cal_m_alpha}.
\end{theorem}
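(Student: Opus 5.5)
The proof follows Theorem~\ref{the_er_ode}, with Lemma~\ref{le:stable-semi} replacing Theorem~\ref{th-euniformgrid}.

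\emph{Existence and uniqueness.} Set $e^m:=U^m-u(\cdot,t_m)$. At each time level, \eqref{eq:defsemidis} is the elliptic problem
$$\kappa_{m,m}(1-\beta)^{-1}U^m+\mathcal L U^m=\mbox{(known data)},$$
posed in $H^1_0(\Omega)$. Since $\kappa_{m,m}>0$ by Theorem~\ref{theo_beta_repr} (or directly, since $\delta_t^\alpha\phi^m(t_m)>0$) and $\mathcal L$ is coercive, the Lax--Milgram lemma gives a unique solution $U^m$ recursively for $m=1,\dots,M$. For the finitely many initial levels $m<K$, where \eqref{rho_cond} may fail, positivity of $\delta_t^\alpha\phi^m(t_m)$ still holds: it equals $\A_m\tau_m^{-\alpha}/\Gamma(2-\alpha)$ with $\A_m=1+\hat b_{m-1}>0$ by \eqref{AB_def_aa}, and this formula does not use \eqref{rho_cond}.

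\emph{Error equation.} Subtract \eqref{eq:problem} at $t=t_m$ from \eqref{eq:defsemidis}. This gives
$$\delta_t^\alpha e^m+\mathcal L e^m=-r^m,\qquad e^0=0,\qquad e^m=0\ \mbox{on}\ \pt\Omega,$$
where $r^m:=\delta_t^\alpha u(\cdot,t_m)-\pt_t^\alpha u(\cdot,t_m)$ is the temporal truncation error, now $L_2(\Omega)$-valued.

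\emph{Truncation error.} Lemma~\ref{lem_r} gives $\|r^m\|_{L_2(\Omega)}\lesssim\tau^{\sigma-\alpha}(\tau/t_m)^{\gamma+1}$. Its proof carries over verbatim to the Banach-space-valued case. The representation \eqref{trun_int} holds in $L_2(\Omega)$. The interpolation error bounds \eqref{chi1} and \eqref{chi0} hold for $\|\chi(\cdot,s)\|_{L_2(\Omega)}$, because Lagrange interpolation errors are controlled by $L_2(\Omega)$-norms of $\pt_t^lu$ through the Peano kernel integral representation. For $\sigma\in(1,2)$ one again subtracts $t\,\pt_tu(\cdot,0)$. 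Then Minkowski's integral inequality applied to \eqref{trun_int} yields the bound.

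\emph{Stability and conclusion.} Apply Lemma~\ref{le:stable-semi} to $\{e^m\}$ with right-hand side $-r^m$. To match its hypothesis exactly, use linearity and rescale by $\tau^{\sigma-\alpha}$. This yields $\|e^m\|_{L_2(\Omega)}\lesssim\tau^{\sigma-\alpha}\,\U^m(\tau_1;\gamma)$, with $\gamma$ from \eqref{r_bound_simple}. The remaining step is the case analysis already carried out in the proof of Theorem~\ref{the_er_ode}: according to the sign of $\gamma$, equivalently whether $r$ is below, equal to, or above $\frac{3-\alpha}{\nu}$, the bound $\tau^{\sigma-\alpha}\U^m$ is $\simeq{\mathcal E}^m$. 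The simplification to ${\mathcal E}^m_\alpha$ for $\sigma=\alpha$ is immediate.

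The only point needing care is the vector-valued truncation error estimate; the rest is a direct citation of earlier results.
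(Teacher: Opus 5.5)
Your proposal is correct and follows the paper's own proof. Like the paper, you derive the error equation with the truncation error $r^m$ as right-hand side, bound $\|r^m\|_{L_2(\Omega)}$ as in Lemma~\ref{lem_r}, apply Lemma~\ref{le:stable-semi}, and finish with the case analysis from the proof of Theorem~\ref{the_er_ode}; your Lax--Milgram argument for existence and uniqueness (using $\delta_t^\alpha\phi^m(t_m)>0$ on any mesh) and your Peano-kernel/Minkowski justification of the $L_2(\Omega)$-valued truncation bound soundly supply details the paper leaves implicit.
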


\begin{proof}
It suffices to note that the error $e^m:=U^m-u(t_m)$ satisfies a version of \eqref{eq:defsemidis}
with $f(\cdot,t_{m})$
replaced by the truncation error $r^m=\delta_{t}^{\alpha} u(\cdot,t_m)-\pt_t^\alpha u(\cdot,t_m)$.
Since for the latter one gets
$\|r^m\|_{L_2(\Omega)}\lesssim  \tau_{1}^{\sigma - \alpha} (\tau_{1}/t_m)^{\gamma+1}$
(similarly to Lemma~\ref{lem_r}), an application of Lemma~\ref{le:stable-semi}
yields the desired error bound
(which can be shown along the lines of the proof of Theorem~\ref{the_er_ode}).
\end{proof}

\subsection{Error bounds for full discretizations}
Let $S_{h}\subset H_{0}^{1}\cap C(\bar{\Omega})$ be a Lagrange finite element space of fixed degree $s\geq 1$,
 relative to a
 quasiuniform simplicial triangulation
 $\mathcal T$ of the spatial domain $\Omega$.
Now for all $m=1,\dots,M$, let $u_{h}^{m}\in S_{h}$ satisfy
\begin{align}\label{eq:fullydis}
  \langle \delta_{t}^{\alpha} u_{h}^{m},v_{h} \rangle + A( u_{h}^{m},v_{h} )= \langle f(\cdot,t_{m}),v_{h} \rangle\quad \forall v_{h}\in S_{h}
\end{align}
Here, $\langle \cdot,\cdot \rangle$ is the $L_{2}(\Omega)$ inner product and $A$ is the standard symmetric bilinear form $A(v,w)=\langle \mathcal{L}v,w \rangle$.

To estimate the error of the above full discretization,
 define a Ritz projection $\mathcal{R}_{h}u(t)\in S_{h}$ of $u$ by $A(\mathcal{R}_{h}u,v_{h})=\langle \mathcal{L}u,v_{h} \rangle$ $\forall\, v_{h}\in S_{h}$.
 For the error of the Ritz projection $\rho(\cdot,t):=\mathcal{R}_{h}u(t)-u(\cdot,t)$ one typically gets
 \begin{align*}
   \| \rho(\cdot,t) \|_{L_{2}(\Omega)}&\lesssim h \inf_{v_{h}\in S_{h}} \|  u(\cdot,t) -v_{h} \|_{H^{1}(\Omega)},\\
     \|\delta_t^\alpha\rho(\cdot,t_m) \|_{L_{2}(\Omega)}&\lesssim h \inf_{v_{h}\in S_{h}} \|  \delta_t^\alpha u(\cdot,t_m) -v_{h} \|_{H^{1}(\Omega)},
 \end{align*}
where the first bound can be found in  \cite[Theorem 5.7.6]{BrennerScott},
while the second bound follows, in view of $\delta_t^\alpha\mathcal{R}_{h}=\mathcal{R}_{h}\delta_t^\alpha$ (since $\mathcal{L}$ is independent of $t$).

\begin{theorem}\label{th-error-fully}
Under the conditions of Theorem~\ref{the_er_semi} on the solution $u$ of \eqref{eq:problem} and on the temporal mesh,
for the  $\{ u_{h}^{m} \}$ from \eqref{eq:fullydis} subject to $u_h^0=\mathcal{R}_h u_0$, one has
 \begin{align}
   \| u(\cdot,t_{m})- u_{h}^{m} \|_{L_{2}(\Omega)}\lesssim \mathcal{E}^{m} +  \| \rho(\cdot,t_{m}) \|_{L_{2}(\Omega)} +  \max_{m\ge 1} \Bigl\{ t_m^{\alpha}\,\| \delta_t^\alpha\rho(\cdot,t_m) \|_{L_{2}(\Omega)}\Bigr \}
 \end{align}
 for all $m\geq 1$, where $ \rho(\cdot,t):=\mathcal{R}_{h}u(\cdot,t)-u(\cdot,t)$, and $\mathcal{E}^{m}$ is defined in \eqref{E_cal_m}.
\end{theorem}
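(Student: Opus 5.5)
We split the error in the standard way as $u(\cdot,t_m)-u_h^m=-\rho(\cdot,t_m)-e_h^m$, where $e_h^m:=u_h^m-\mathcal{R}_h u(\cdot,t_m)\in S_h$. The term $\|\rho(\cdot,t_m)\|_{L_2(\Omega)}$ already appears on the right-hand side. It therefore remains to bound $\|e_h^m\|_{L_2(\Omega)}$. Since $u_h^0=\mathcal{R}_h u_0$, one has $e_h^0=0$.

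\textbf{Step 1: the error equation.} Subtract the weak form of \eqref{eq:problem} at $t=t_m$ from \eqref{eq:fullydis}, and use $A(\mathcal{R}_h u,v_h)=\langle \mathcal{L}u,v_h\rangle$. This gives, for all $v_h\in S_h$,
\[
\langle \delta_t^\alpha e_h^m,v_h\rangle + A(e_h^m,v_h)=\langle r^m-\delta_t^\alpha\rho(\cdot,t_m),v_h\rangle,
\]
where $r^m=\delta_t^\alpha u(\cdot,t_m)-\pt_t^\alpha u(\cdot,t_m)$ is the temporal truncation error. Let $P_h$ denote the $L_2$ projection onto $S_h$. This is a discrete analogue of \eqref{eq:defsemidis} with the operator $\mathcal{L}_h$ defined by $\langle\mathcal{L}_h w,v_h\rangle=A(w,v_h)$ and data $P_h(r^m-\delta_t^\alpha\rho(\cdot,t_m))$. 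Because the equation is linear, we write $e_h^m=e_1^m+e_2^m$, where $e_1$ is driven by $P_hr^m$ and $e_2$ by $-P_h\delta_t^\alpha\rho(\cdot,t_m)$, both with zero initial data.

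\textbf{Step 2: the stability lemma in the discrete setting.} The proof of Lemma~\ref{le:stable-semi} uses only three facts: the inner product identity \eqref{Qm_eq}, positivity and symmetry of $\langle\mathcal{L}\cdot,\cdot\rangle$ (so that Cauchy--Schwarz in the $A$-inner product gives \eqref{Qm}), and Corollary~\ref{co-kW} together with Lemma~\ref{lem_cond_aaa}. All three transfer verbatim to $S_h$ with test function $V^m\in S_h$, since $A$ is symmetric and nonnegative. Hence Lemma~\ref{le:stable-semi} holds for $e_1$ and $e_2$, with $\|P_h f\|_{L_2(\Omega)}\le\|f\|_{L_2(\Omega)}$.

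\textbf{Step 3: the two contributions.} For $e_1$, use the truncation bound $\|r^m\|_{L_2(\Omega)}\lesssim\tau_1^{\sigma-\alpha}(\tau_1/t_m)^{\gamma+1}$ from the proof of Theorem~\ref{the_er_semi}. Applying the discrete stability lemma and repeating the case analysis of Theorem~\ref{the_er_ode} gives $\|e_1^m\|_{L_2(\Omega)}\lesssim\mathcal{E}^m$.

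For $e_2$, let $\mu:=\max_{m\ge1}\{t_m^\alpha\|\delta_t^\alpha\rho(\cdot,t_m)\|_{L_2(\Omega)}\}$. Then
\[
\|\delta_t^\alpha\rho(\cdot,t_m)\|_{L_2(\Omega)}\le\mu\,t_m^{-\alpha}=\mu\,\tau_1^{-\alpha}(\tau_1/t_m)^{\gamma+1}\qquad\mbox{with }\gamma:=\alpha-1\in(-1,0).
\]
Lemma~\ref{le:stable-semi}, applied to $e_2/(\mu\tau_1^{-\alpha})$, yields
\[
\|e_2^m\|_{L_2(\Omega)}\lesssim\mu\,\tau_1^{-\alpha}\,\tau_1\,t_m^{\alpha-1}(\tau_1/t_m)^{\alpha-1}=\mu,
\]
because $\gamma\ne0$ and hence $\ell_\gamma=1$. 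Combining the bounds on $e_1$, $e_2$ and $\rho$ via the triangle inequality gives the assertion.

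\textbf{Main obstacle.} The delicate point is Step 2, namely checking that the energy argument behind \eqref{Qm} survives in $S_h$. Case (ii) of Lemma~\ref{le:stable-semi}, with $K\ge2$, also needs a discrete version of Lemma~\ref{lem2}. Both rely only on symmetry and positivity of $A$, so they should carry over without change. One must also choose the scaling $\gamma=\alpha-1$ in Step 3 so that the logarithmic factor $\ell_\gamma$ does not appear.
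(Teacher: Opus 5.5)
Your proposal is correct and follows essentially the same route as the paper. Both use the Ritz-projection splitting with $e_h^0=0$ and a fully discrete version of Lemma~\ref{le:stable-semi}, applied to the truncation part (giving $\mathcal{E}^m$) and to the $\delta_t^\alpha\rho$ part with $\gamma=\alpha-1$, so that $\mu\,\tau_1^{-\alpha}\,\mathcal{U}^m(\tau_1;\alpha-1)=\mu$; one harmless slip is that with your convention $e_h^m=u_h^m-\mathcal{R}_h u(\cdot,t_m)$ the error-equation right-hand side is $\langle -r^m-\delta_t^\alpha\rho(\cdot,t_m),v_h\rangle$, which does not matter since only norms enter.
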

\begin{proof}
First, note that
for the fully discrete problem
one gets a version of the stability result \eqref{eq:semidis_stability} of Lemma~\ref{le:stable-semi}.
Now,
  let $e_{h}^{m}:=\mathcal{R}_{h}u(t_{m})-u_{h}^{m}\in S_{h}$. Then $u(\cdot,t_{m})-u_{h}^{m}=e_{h}^{m}-\rho(\cdot,t_{m})$,
   and a standard calculation using \eqref{eq:fullydis} and \eqref{eq:problem} yields
  \begin{align}
     \langle \delta_{t}^{\alpha} e_{h}^{m},v_{h} \rangle + A( e_{h}^{m},v_{h} )
     \nonumber=& \langle \delta_{t}^{\alpha} \mathcal{R}_{h}u(\cdot,t_{m}),v_{h} \rangle + A(\mathcal{R}_{h}u(\cdot,t_{m}),v_{h} )-\langle f(\cdot,t_{m}),v_{h} \rangle
     \\\label{eq:fully-error}
     =& \langle \delta_{t}^{\alpha} \rho(\cdot,t_{m})+r^{m},v_{h} \rangle\quad \forall v_{h}\in S_{h}.
  \end{align}
  Here $r^{m}=\delta_{t}^{\alpha} u(\cdot,t_{m})-\pt_{t}^{\alpha}u(\cdot,t_{m})$ is the truncation error, which induces $ \mathcal{E}^{m}$ in the error bound
  (which is shown exactly as in the proof of Theorem~\ref{the_er_semi}).
  To estimate the error induced by $\delta_{t}^{\alpha} \rho(\cdot,t_{m})$, note that
  $\|\delta_{t}^{\alpha} \rho(\cdot,t_{m})\|_{L_{2}(\Omega)}\le E\, t_m^{-\alpha}=E\, \tau_1^{-\alpha}\,(\tau_1/t_m)^{\alpha}$, where $E:=\max_{m\ge 1} \bigl\{ t_m^{\alpha}\,\| \delta_t^\alpha\rho(\cdot,t_m) \|_{L_{2}(\Omega)}\bigr \}$. Hence, using a version of the stability result \eqref{eq:semidis_stability}, this part of the error
  is $\lesssim E\, \tau_1^{-\alpha}\, \mathcal{U}^{m}(\tau_{1};\alpha-1)= E$.
\end{proof}

Note that using Theorem \ref{th-error-fully} and, additionally, assuming sufficient spatial regularity of $u$ and $\pt_t u$,
	namely
	\(\|u\|_{W_2^{s+1}(\Omega)}+
	\|\partial_t u\|_{W_2^{s+1}(\Omega)}\lesssim 1\),
	 one gets
  \begin{align}\label{co-convergence}
    \| u(\cdot,t_{m})- u_{h}^{m} \|_{L_{2}(\Omega)}&\lesssim  \mathcal{E}^{m} + h^{s+1}.
  \end{align}

\section{Extension to semilinear subdiffusion equations}\label{sec_semi}
Consider a semilinear version of \eqref{eq:problem} of the form
\begin{align}\label{eq:semiproblem}
     \pt_{t}^{\alpha} u + \mathcal{L} u = f(x,t,u)\ \hbox{for}\ (x,t)\in\Omega\times(0,T] ,\quad
      u(x,t) = 0 \ \hbox{for}\ (x,t)\in\partial\Omega\times(0,T],
\end{align}
 subject to $u(x,0)=u_{0}(x)$ $\forall\,x\in\Omega$,
and its semidiscretization 
in time
\begin{align}\label{eq:semidisforsemiprob}
  \delta_{t}^{\alpha} U^{m} + \mathcal{L} U^{m} = f(\cdot,t_{m},U^{m})\ \hbox{in}\ \Omega,\quad U^{m}=0 \ \hbox{on}\ \partial\Omega \quad \forall m = 1,\dots,M,
\end{align}
also subject to $U^{0}=u_{0}$, with the same L2 operator $\delta_{t}^{\alpha}$ from \eqref{eq:defdelta}.

Our purpose in this section is to extend Theorem~\ref{the_er_semi} to the semilinear case,
assuming that
$f$ is continuous and
$ \ |\partial_{u} f(\cdot,\cdot,\cdot)|\leq \lambda~ \hbox{in}~ \Omega\times(0,T]\times \mathbb{R}$,
for some positive constant $\lambda$. (Hence, without loss of generality, it may also be assumed that $c(x)=0$ in the definition \eqref{eq:def-ellipticspatial} of $\mathcal{L}$.
Note also that our assumption of $f$ may be weakened to a Lipschitz condition; see \cite[Assumption A1]{NataliaSemi}.)
Throughout this section, we make
a related additional condition on the temporal mesh that
\begin{equation}\label{tau_lambda}
  \lambda\tau_j^{\alpha}< \{\Gamma(2-\alpha)\}^{-1}\quad \forall\,j\ge1.
\end{equation}
Our first step is to show that if the temporal grid is such that one gets
a discrete comparison principle in the form of Corollary~\ref{th-discretecomparisonprinciple} for $\delta_t^\alpha$, one immediately enjoys
a similar discrete comparison principle for the operator $\delta_t^\alpha-\lambda$.

\begin{lemma}[discrete comparison principle for $\delta_t^\alpha-\lambda$]\label{semi-discretecomparisonprinciple}
Under the conditions of Corollary~\ref{th-discretecomparisonprinciple} on the temporal grid
and the additional condition \eqref{tau_lambda},
the following
statement is true:
\smallskip
%

If $U^{0}= 0$ and $(\delta_t^\alpha-\lambda)U^{m}\geq 0$ $\forall\, m\geq 1$, then $U^{j}\geq0$ for all $j\geq 0$.
%
\end{lemma}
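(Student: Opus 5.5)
We reuse the representation \eqref{eq:newrepresentation}. With $V^j=(U^j-\beta U^{j-1})/(1-\beta)$ and $V^0=U^0=0$, we have $U^m=(1-\beta)V^m+\beta U^{m-1}$. Hence $U^m=(1-\beta)\sum_{j=1}^m\beta^{m-j}V^j$. The operator $\delta_t^\alpha-\lambda$ then acts on $\vec V$ as
\[
(\delta_t^\alpha-\lambda)U^m=\bigl[\kappa_{m,m}-\lambda(1-\beta)\bigr]V^m+\sum_{j=1}^{m-1}\bigl[\kappa_{m,j}-\lambda(1-\beta)\beta^{m-j}\bigr]V^j .
\]
So $\vec F=\tilde A_1\vec V$ with $\vec V=A_2^{-1}$-type relation unchanged, namely $\vec V=A_2\vec U$ where $A_2$ is the bidiagonal M-matrix from the proof of Corollary~\ref{co-kW}. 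By Theorem~\ref{theo_beta_repr}, the off-diagonal entries of $\tilde A_1$ remain nonpositive, since $\kappa_{m,j}\le0$ and the added terms $-\lambda(1-\beta)\beta^{m-j}$ are negative. The matrix $\tilde A_1$ is lower triangular, so it is inverse-monotone as soon as its diagonal entries are positive; a lower-triangular matrix with positive diagonal and nonpositive off-diagonal entries has a nonnegative inverse by forward substitution. The conclusion then follows exactly as in Corollary~\ref{th-discretecomparisonprinciple}: $\tilde A_1\vec V\ge0$ gives $\vec V\ge0$, and then $A_2\vec U=\vec V\ge0$ gives $\vec U\ge0$.

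The only real step is therefore to show $\kappa_{m,m}>\lambda(1-\beta)$ for all $m$. By Lemma~\ref{le:tran} and \eqref{Phi_phi}, $\kappa_{m,m}=(1-\beta)\,\delta_t^\alpha\phi^m(t_m)$. The condition becomes $\delta_t^\alpha\phi^m(t_m)>\lambda$, or by \eqref{AB_def_a}, $\A_m>\lambda\tau_m^\alpha\Gamma(2-\alpha)$. By \eqref{AB_def_aa}, $\A_m=1+\hat b_{m-1}\ge1$, since $\hat b_{m-1}\ge0$. Condition \eqref{tau_lambda} gives $\lambda\tau_m^\alpha\Gamma(2-\alpha)<1$, so the diagonal is positive.

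I would double-check two points. First, \eqref{AB_def_aa} holds on the whole grid considered; it is derived directly from \eqref{nu_def}, independently of \eqref{rho_cond}. Second, the sign of $\hat b_{m-1}$: from \eqref{bar_b_def} it is a product of positive factors, and $\hat b_0=0$ gives $\A_1=1$. These are the only potential obstacles; everything else is the M-matrix argument already used. Alternatively, one could avoid matrices by an induction on $m$, taking the first index where $U^m<0$ and contradicting the sign, but the matrix route is cleaner.
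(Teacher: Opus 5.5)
Your proof is correct, but it is organized differently from the paper's.

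The paper keeps the product $A_1A_2$ intact. From $A_1A_2\vec U\ge\lambda\vec U$ and $(A_1A_2)^{-1}\ge0$ it gets $\vec U\ge\lambda(A_1A_2)^{-1}\vec U$. It then inducts on $(1-\lambda a^*_{mm})U^m\ge\lambda\sum_{j<m}a^*_{mj}U^j$, which needs $\lambda a^*_{mm}<1$. For that last condition it only cites \cite[Lemma~2.2]{NataliaSemi}, observing that the diagonal of $A_1A_2$ matches the forward L2 operator's.

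You instead move $-\lambda$ into the first factor, writing $\delta_t^\alpha-\lambda$ as $(A_1-\lambda A_2^{-1})A_2$. Since $A_2^{-1}\ge0$, the matrix $A_1-\lambda A_2^{-1}$ is still lower triangular with nonpositive off-diagonal entries. The diagonal requirements coincide: $a^*_{mm}=(1-\beta)/\kappa_{m,m}=1/\delta_t^\alpha\phi^m(t_m)$, so both reduce to $\delta_t^\alpha\phi^m(t_m)>\lambda$. Your check of this via $\A_m=1+\hat b_{m-1}\ge1$ and \eqref{tau_lambda} is self-contained and valid for every $m\ge1$, including $\A_1=1$.

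What each route buys:
\begin{itemize}
\item The paper's argument uses nothing beyond inverse-monotonicity of the lower-triangular matrix $A_1A_2$, so it transfers verbatim to any inverse-monotone scheme.
\item Yours needs the factorization \eqref{eq:newrepresentation}, but in return shows that $\delta_t^\alpha-\lambda$ is itself a product of two inverse-monotone lower-triangular matrices.
\end{itemize}

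Your closing remark about a matrix-free, first-negative-index induction would not work directly on $U^m$. The operator $\delta_t^\alpha$ is not an M-matrix: for instance, $\delta_t^\alpha\phi^{m-2}(t_m)>0$ occurs even on uniform grids for moderate $\alpha$, which is why \eqref{eq:newcondition2} is separated from \eqref{eq:newcondition3}. Such an induction works only at the level of $V^m$, which is exactly what your main argument does.
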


\begin{proof}
We essentially follow the proof of \cite[Lemma~2.2]{NataliaSemi}.
As already discussed in the proof of Corollary~\ref{co-kW},
$(\delta_t^\alpha-\lambda)U^{m}\geq 0$ 
$\forall\, m\geq 1$ can be written in the matrix form as
$A_1 A_2\vec{U}\ge \lambda \vec{U}$,
where
$A_1$ and $A_2$ are
 $M\times M$ matrices, and the notation
 $\vec{U}:=\{U^j\}_{j=1}^M$ is used for the corresponding column vector.
Recall that, in view of \eqref{eq:condition_kappa}, both $A_1$ and $A_2$ are inverse-monotone, hence the product $A_1A_2$
is also inverse-monotone (i.e. the elements of its inverse are non-negative).
Hence, one gets
$\vec{U}\ge \lambda (A_1 A_2)^{-1}\vec{U}$, where $A_1$, $A_2$, and hence both
$A_1 A_2$ and $(A_1 A_2)^{-1}$, are lower triangular matrices. Denoting the elements of $(A_1 A_2)^{-1}$ by $a^*_{ij}$, one then gets
$(1-\lambda a^*_{mm})U^m\ge \lambda\sum_{j<m}a^*_{mj} U^j$, so the desired assertion is easily proved by induction
as long as $1-\lambda a^*_{mm}>0$ $\forall\,m\ge1$. The latter is ensured by \eqref{tau_lambda}
(as explained in the proof of \cite[Lemma~2.2]{NataliaSemi}; note also that the diagonal elements in the lower-triangular matrix $A_1 A_2$, and, hence, the diagonal elements in its inverse
$(A_1 A_2)^{-1}$, are exactly the same for our $\delta_t^\alpha$ and the forward L2 operator considered in \cite{NataliaSemi}).
\end{proof}

Now that we have established the above discrete comparison principle, 
we can extend our convergence analysis to the semilinear case, which ultimately yields the following error bound.

\begin{theorem}[error bound for semilinear case]\label{the_er_semilin}
Let the solution $u$ of \eqref{eq:semiproblem} satisfy
$\textstyle\|\pt_t^l u(\cdot,t)\|_{L_2(\Omega)}\lesssim 1+t^{\sigma-l}$ for $ l=0,1,2,3$ and $t\in(0,T]$,
 with some $\sigma \in (0,1) \cup (1,2)$.
Additionally, let the temporal mesh
in \eqref{eq:semidisforsemiprob} satisfy \eqref{tau_lambda} and the assumptions made in Theorem~\ref{the_er_semi}.
%
%
Then there exists a unique solution of \eqref{eq:semidisforsemiprob} and $\forall\,m\ge 1$ one has
\begin{equation}
\label{E_cal_m_semilin}
\|u(t_m)-U^m\|_{L_2(\Omega)}\lesssim {\mathcal E}^m\qquad \forall\,m\ge 1,
\end{equation}
where ${\mathcal E}^m$ is defined in \eqref{E_cal_m}.
If $\sigma=\alpha$, then ${\mathcal E}^m$ simplifies to ${\mathcal E}_\alpha^m$ defined in \eqref{E_cal_m_alpha}.
\end{theorem}

To prove the above theorem, we shall employ the following version of Lemma~\ref{le:stable-semi}.

\begin{lemma}[stability for semilinear parabolic case] \label{le:stable-semi_nonl}
Let the temporal mesh satisfy the conditions in Lemma~\ref{le:stable-semi}
and the additional condition \eqref{tau_lambda}.
%
%
  Then,   given $\gamma\in\mathbb{R}$, the following is true for $\{U^{j}\}_{j=0}^{M}$ from \eqref{eq:defsemidis}:
     \begin{align}\label{eq:semilin_stability}
        \left.
        \begin{array}{c}
           \|f(\cdot,t_{m}, U^m)\|_{L_2(\Omega)}-\lambda \|U^{m}\|_{L_2(\Omega)} \lesssim (\tau_{1}/t_{m})^{\gamma+1} \;\\[4pt]
            \;\forall m\geq 1,\quad U^{0}=0
        \end{array}\!\!\!
        \right\}
         \;\;\Rightarrow\;\;
         \!\!\!\begin{array}{l}\|U^{m}\|_{L_2(\Omega)}  \lesssim \mathcal{U}^{m}(\tau_{1};\gamma)\\
         \qquad{}=\ell_{\gamma}\, \tau_{1}\,t_{m}^{\alpha-1} (\tau_{1}/t_{m})^{\min\{0,\gamma\}},
         \end{array}\!\!\!
    \end{align}
where $\ell_{\gamma}=\ell_{\gamma}(t_{m})=1+\ln(t_{m}/\tau_{1})$ for $\gamma=0$, and $\ell_{\gamma}=1$ otherwise.
\end{lemma}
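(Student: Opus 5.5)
The plan is to repeat the energy argument from part (i) of the proof of Lemma~\ref{le:stable-semi}, now applied to the semilinear problem \eqref{eq:semidisforsemiprob}, and then to replace the final appeal to Corollary~\ref{co-kW} by an appeal to the comparison principle of Lemma~\ref{semi-discretecomparisonprinciple} for $\delta_t^\alpha-\lambda$.

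First take $K=1$. Using \eqref{eq:newrepresentation}, \eqref{eq:condition_kappa} (available by Theorem~\ref{theo_beta_repr}), set $V^m=(U^m-\beta U^{m-1})/(1-\beta)$ and define $W^m$ exactly as in the proof of Lemma~\ref{le:stable-semi}. Take the inner product of the equation with $V^m$ and use \eqref{Qm}, which rests on Lemma~\ref{lem_cond_aaa} and Remark~\ref{rem_kappa}. This gives
\[
\sum_{j=1}^{m}\kappa_{m,j}W^j\le \|f(\cdot,t_m,U^m)\|_{L_2(\Omega)}\le F^m+\lambda\|U^m\|_{L_2(\Omega)},
\qquad F^m:=(\tau_1/t_m)^{\gamma+1}
\]
(up to a constant). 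Together with $W^m\ge(\|U^m\|-\beta\|U^{m-1}\|)/(1-\beta)$ from \eqref{W_U}, this can be written in matrix form as $A_1\vec W\le \vec F+\lambda\|\vec U\|$ and $A_2\|\vec U\|\le\vec W$, with $A_1,A_2$ the lower-triangular M-matrices from the proof of Corollary~\ref{co-kW}.

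Inverse monotonicity of $A_1$ then gives $\vec W\le A_1^{-1}(\vec F+\lambda\|\vec U\|)$, and hence $\|\vec U\|\le (A_1A_2)^{-1}\vec F+\lambda(A_1A_2)^{-1}\|\vec U\|$. Let $\vec U_*$ solve $(\delta_t^\alpha-\lambda)U_*^m=F^m$ with $U_*^0=0$; it is uniquely defined because $1-\lambda a^*_{mm}>0$ by \eqref{tau_lambda}. Then $Z:=U_*-\|U\|$ satisfies $\vec Z\ge\lambda(A_1A_2)^{-1}\vec Z$. The induction used in the proof of Lemma~\ref{semi-discretecomparisonprinciple} then yields $Z\ge0$, that is, $\|U^m\|\le U_*^m$.

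It remains to bound $U_*^m\lesssim\mathcal U^m(\tau_1;\gamma)$, and I expect this to be the main obstacle. The first step is Lemma~\ref{semi-discretecomparisonprinciple} applied to $U_*$, which gives $U_*\ge0$. Since $\delta_t^\alpha U_*^m=F^m+\lambda U_*^m$, Theorem~\ref{th-euniformgrid} cannot be applied directly because $U_*$ appears on the right. I would handle this by building a barrier of the form $\mathcal W=C\,\mathcal U^m_{\rm bar}$, where the barrier from the proof of Lemma~\ref{lem1} satisfies $\delta_t^\alpha\mathcal U_{\rm bar}\gtrsim F$, and then absorbing the term $\lambda\,\mathcal U_{\rm bar}$. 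This absorption seems feasible because $\mathcal U^m\lesssim \tau_1t_m^{\alpha-1}(\tau_1/t_m)^{\min\{0,\gamma\}}$, so $t_m^{\alpha}\mathcal U^m\cdot t_m^{-\alpha}$ is dominated by $F^m$ only up to a factor $t_m^\alpha$, which is small only for small $t_m$. The fallback is a Gronwall-type splitting in time: first handle $t_m\le T_0$ with $\lambda T_0^\alpha$ small via comparison, then iterate over $O(1)$ time windows, or alternatively add an $E_\alpha(2\lambda t^\alpha)$-type factor to the barrier as in \cite{NataliaSemi}. Whichever of these works, applying Lemma~\ref{semi-discretecomparisonprinciple} to $\mathcal W-U_*$ gives the bound on $U_*$.

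Finally, the case $2\le K\lesssim1$ is treated exactly as in part (ii) of the proof of Lemma~\ref{le:stable-semi}. The first $K$ levels are bounded by $\tau_1^\alpha$ directly: there $\lambda\|U^m\|$ is absorbed using $\kappa_{m,m}\gtrsim\tau_1^{-\alpha}$ and \eqref{tau_lambda}. The remaining levels are then shifted and handled as in Corollary~\ref{cor_lem1}.
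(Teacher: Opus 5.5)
Your reduction is the same as the paper's. Repeating the energy argument of Lemma~\ref{le:stable-semi} gives \eqref{W_U} with $f^m=f(\cdot,t_m,U^m)$. The $A_1$, $A_2$ comparison then reduces everything to a scalar sequence $U_*\ge0$ with $(\delta_t^\alpha-\lambda)U_*^m\lesssim F^m:=(\tau_1/t_m)^{\gamma+1}$. The paper takes $\vec U_*=A_2^{-1}\vec W$; you take the exact solution of $(\delta_t^\alpha-\lambda)U_*=F$ and use the induction from Lemma~\ref{semi-discretecomparisonprinciple}. The two are equivalent, and that part of your argument is correct.

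The gap is the step you yourself flag: proving $U_*^m\lesssim\mathcal U^m(\tau_1;\gamma)$. This is a one-sided version of Theorem~\ref{th-euniformgrid} for $\delta_t^\alpha-\lambda$, and it is the only genuinely new ingredient of the lemma. The paper obtains it by following the proof of \cite[Theorem~2.2]{NataliaSean}, which relies on \cite[Lemma~3.2]{NataliaUpLow} adapted to the L2 operator. You list candidate strategies but establish none of them.

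Your primary candidate also fails where it is needed. Note that $\mathcal U^m(\tau_1;\gamma)=\ell_\gamma\,t_m^{\alpha}(t_m/\tau_1)^{\max\{0,\gamma\}}F^m$. So the claim that $\lambda\mathcal U^m$ is dominated by $F^m$ up to a factor $t_m^\alpha$ holds only for $\gamma<0$. For $\gamma\ge0$ the ratio $\lambda\mathcal U^m/F^m\simeq\lambda\,\ell_\gamma\,t_m^{\alpha}(t_m/\tau_1)^{\gamma}$ is unbounded as $\tau_1\to0$, even for fixed small $t_m$. A barrier from Lemma~\ref{lem1} only guarantees $\delta_t^\alpha\mathcal W\gtrsim F$, so the term $\lambda\mathcal W$ cannot be absorbed on any window $[0,T_0]$, however small $\lambda T_0^\alpha$ is. Yet $\gamma\ge0$ is exactly the case $r\le(3-\alpha)/\nu$ in Theorem~\ref{the_er_semilin}.

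On a short window this can be repaired by a Neumann series $U_*=\sum_k(\lambda G)^kGF$ with $G=(A_1A_2)^{-1}$. Theorem~\ref{th-euniformgrid} with $\gamma_1=\min\{0,\gamma\}-\alpha$ gives $\lambda G\,\mathcal U\lesssim\lambda t^\alpha\,\mathcal U$. That is not comparison with an absorbed barrier, though, and it still leaves the rest of $[0,T]$ open.

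Your other two suggestions are not routine either. An $E_\alpha(2\lambda t^\alpha)$-type modification would have to be built into the barrier so that its fractional derivative produces the missing $\lambda\tau_1t^{\alpha-1}$. Its truncation error for the backward L2 operator would then have to be controlled, as in Lemma~\ref{lem1}. Splitting into time windows must cope with the nonlocal memory that $\delta_t^\alpha$ carries over from earlier windows.

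As written, the proposal does not prove the lemma. You need to supply, or cite and adapt, the $\lambda$-stability result for $\delta_t^\alpha-\lambda$.
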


\begin{proof}
We essentially follow the proof of \cite[Lemma~3.1]{NataliaSemi}, so only sketch this proof.
First, we establish a more general version of Theorem~\ref{th-euniformgrid}
with $|\delta_t^\alpha U^m|$ replaced by $(\delta_t^\alpha-\lambda) U^m$, and $|U^m|$ replaced by $U^m$,
which holds true
under the additional condition \eqref{tau_lambda}.
This extension of the original version of Theorem~\ref{th-euniformgrid}
is proved along the lines of the proof of \cite[Theorem~2.2]{NataliaSean}. Importantly, the latter proof
relies on \cite[Lemma 3.2]{NataliaUpLow}; while the latter is presented for the L1 operator, the result also applies
  to our L2 operator $\delta_t^\alpha$
(with minimal changes in the proof).

Next, again under the additional condition \eqref{tau_lambda},
we get a more general version of  Corollary~\ref{co-kW}, in which the hypothesis
$\sum_{j=0}^{m} \kappa_{m,j}\, W^{j}\lesssim (\tau_{1}/t_{m})^{\gamma+1}$
is replaced by a weaker
$\sum_{j=0}^{m} \kappa_{m,j}\, W^{j}-\lambda |U^m|\lesssim (\tau_{1}/t_{m})^{\gamma+1}$.
The proof imitates the one of Corollary~\ref{co-kW}, only now
we set $F^m:=(\tau_1/ t_m)^{\gamma+1}$ $\forall\,m\ge1$ so 
our hypotheses are
$A_1\vec{W}-\lambda\vec{|U|}\lesssim \vec{F}$ and $A_2\vec{|U|}\lesssim\vec{W}$. 
The second hypothesis immediately yields
$\vec{|U|}\lesssim\vec{U}_*:= A_2^{-1}\vec{W}$. Combined with the first, this leads to
$(A_1A_2-\lambda) \vec{U}_*\lesssim \vec{F}$, or, equivalently, $(\delta_t^\alpha-\lambda) U_*^m\lesssim F^m=(\tau_1/ t_m)^{\gamma+1}$, while $|U^m|\le U_*^m$.  Finally, an application of the above more general version of Theorem~\ref{th-euniformgrid}
yields the desired, more general, version of Corollary~\ref{co-kW}.

To complete the proof of \eqref{eq:semilin_stability}, note that
one again gets \eqref{W_U}, only now with $f^{m}:=f(\cdot,t_{m}, U^m)$.
So it remains to subtract $\lambda \|U^{m}\|_{L_2(\Omega)} $
from the first relationship $\sum_{j=1}^{m} \kappa_{m,j} W^j\le \|f^{m}\|_{L_{2}(\Omega)}$ in \eqref{W_U},
and then apply the above more general version of Corollary~\ref{co-kW}.
\end{proof}

\begin{proof}[Proof of Theorem~\ref{the_er_semilin}]
A standard evaluation shows  that the error $e^m:=U^m-u(t_m)$ satisfies a version of \eqref{eq:semidisforsemiprob}
with $f(\cdot,t_{m}, U^m)$
replaced by $F(\cdot,t_{m}, e^m)+ r^m$, where
$F(\cdot,t_{m}, e^m):=f(\cdot,t_{m}, u(\cdot,t_m)+e^m)-f(\cdot,t_{m}, u(\cdot,t_m))$, so $|F(\cdot,t_{m}, e^m)|\le \lambda |e^m|$, while
 $r^m=\delta_{t}^{\alpha} u(\cdot,t_m)-\pt_t^\alpha u(\cdot,t_m)$ is the truncation error.
Since for the latter one gets
$\|r^m\|_{L_2(\Omega)}\lesssim  \tau^{\sigma - \alpha} (\tau/t_m)^{\gamma+1}$
(similarly to Lemma~\ref{lem_r}),
one concludes that
$\|F(\cdot,t_{m}, e^m)+ r^m\|_{L_2(\Omega)}-\lambda \|e^m\|_{L_2(\Omega)}\lesssim \tau^{\sigma - \alpha} (\tau/t_m)^{\gamma+1}$.
Hence an application of Lemma~\ref{le:stable-semi_nonl}
(instead of Lemma~\ref{le:stable-semi}, used in the linear case)
yields the desired error bound.
\end{proof}

	\section{Numerical experiments}\label{sec_numerics}
	
	\begin{example}[Initial-value test problem]\label{ex:simple}
		Consider the simplest initial-value fractional-derivative test problem \eqref{eq:simplest} with exact solution $u(t)=t^\sigma$ and $f(t)=\frac{\sigma+1}{\sigma-\alpha+1} t^{\sigma-\alpha}$.
	\end{example}
	
	In this example, we investigate the performance of our numerical scheme under different regularity conditions of the exact solution by varying the parameter $\sigma$. Specifically, we consider three representative cases: $\sigma = \alpha$, $\sigma = 2\alpha$, and $\sigma = \alpha/2$.
	
	First, we examine the pointwise error distributions. Figures \ref{fig-simple_sigmaEalpha}, \ref{fig-simple_sigmaE2alpha}, and \ref{fig-simple_sigmaEalphaD2} display the pointwise errors for the cases $\sigma = \alpha$ (tested with $\alpha=0.5$), $\sigma = 2\alpha$ (tested with $\alpha=0.3$), and $\sigma = \alpha/2$ (tested with $\alpha=0.7$), respectively. These specific combinations are selected to represent different singularity strengths.
	In each case, tests are conducted on meshes with $M=1024$ utilizing different grading parameters $r$. The numerical errors are compared with the pointwise theoretical error bounds defined in \eqref{E_cal_m} (represented by dashed lines).
the numerical pointwise errors agree remarkably well with the theoretical bounds across the entire time interval, except at a very few initial mesh nodes, where the pointwise error profile is somewhat different, while the error bound \eqref{E_cal_m} is still satisfied.
	
	Next, Table \ref{tb-simple-max} lists the maximum nodal errors and the corresponding computational convergence rates for the specific case $\sigma = \alpha$ with various values of $\alpha$. We test three different grading strategies: $r=1$, $r=3-\alpha$, and the optimally graded mesh with $r=\frac{3-\alpha}{\alpha}$. It can be  observed that the numerical convergence order is exactly $\min\{r\alpha,\ 3-\alpha\}$, which is in perfect agreement with our theoretical analysis.

\begin{figure}[!t]
	\centering
	\begin{minipage}{0.45\linewidth}
		\centering
		\includegraphics[width=\linewidth]{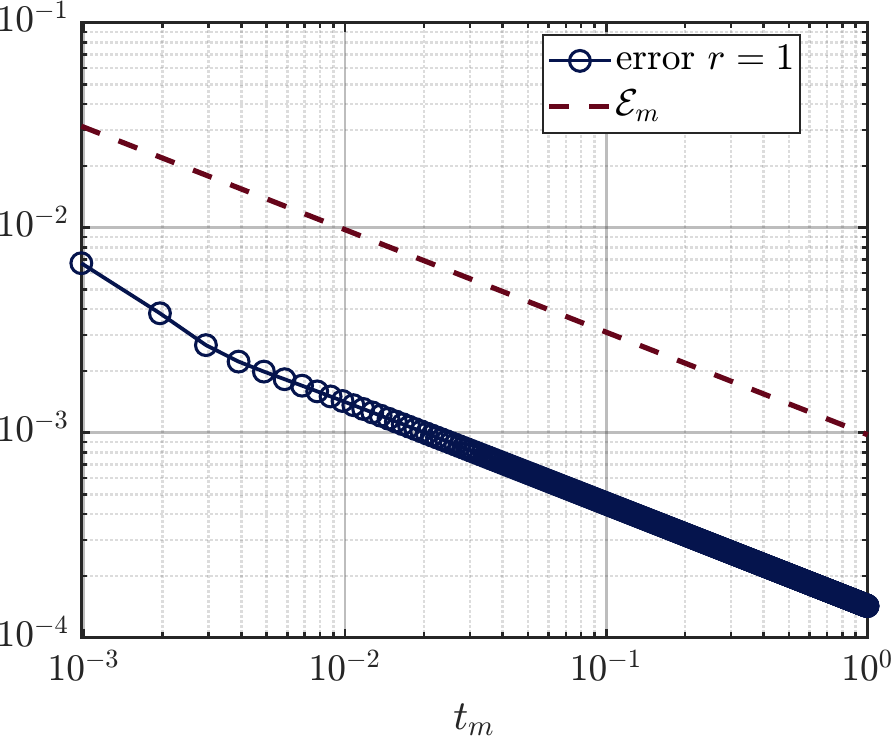}
	\end{minipage}%
	\hspace{0.8em}%
	\begin{minipage}{0.45\linewidth}
		\centering
		\includegraphics[width=\linewidth]{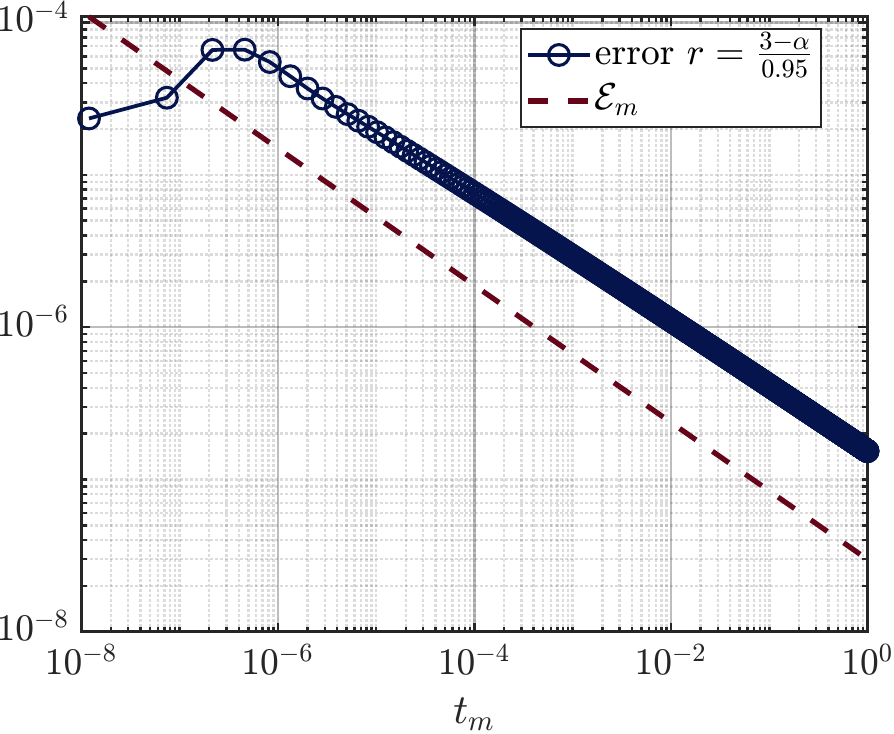}
	\end{minipage}
	
	\vspace{0.8em}
	
	\begin{minipage}{0.45\linewidth}
		\centering
		\includegraphics[width=\linewidth]{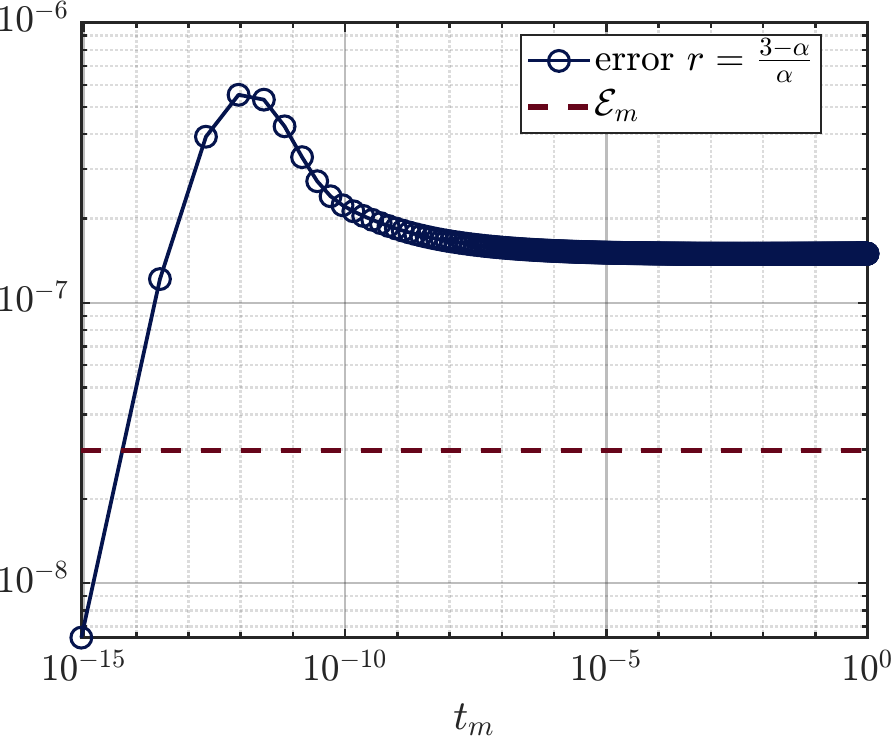}
	\end{minipage}%
	\hspace{0.8em}%
	\begin{minipage}{0.45\linewidth}
		\centering
		\includegraphics[width=\linewidth]{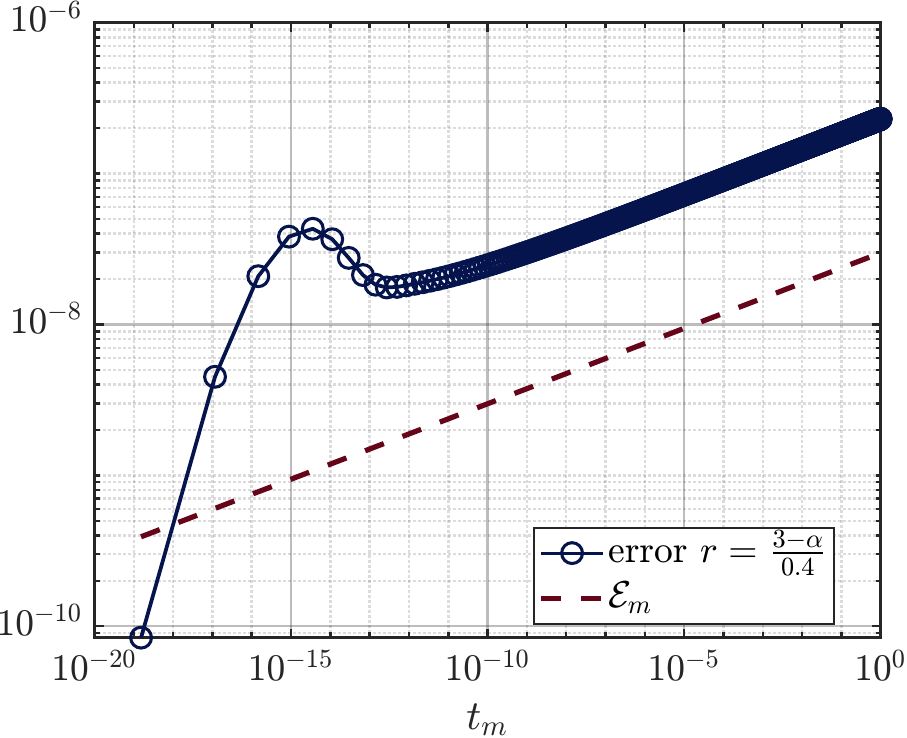}
	\end{minipage}
	
	\caption{Initial-value test problem in Example \ref{ex:simple} with $\sigma = \alpha$: Pointwise errors for $\alpha=0.5$ and $M=1024$, cases $r=1$, $r=\frac{3-\alpha}{0.95}$, $r=\frac{3-\alpha}{\alpha}$, $r=\frac{3-\alpha}{0.4}$.}
	\label{fig-simple_sigmaEalpha}
\end{figure}

\begin{figure}[!t]
	\centering
	\begin{minipage}{0.45\linewidth}
		\centering
		\includegraphics[width=\linewidth]{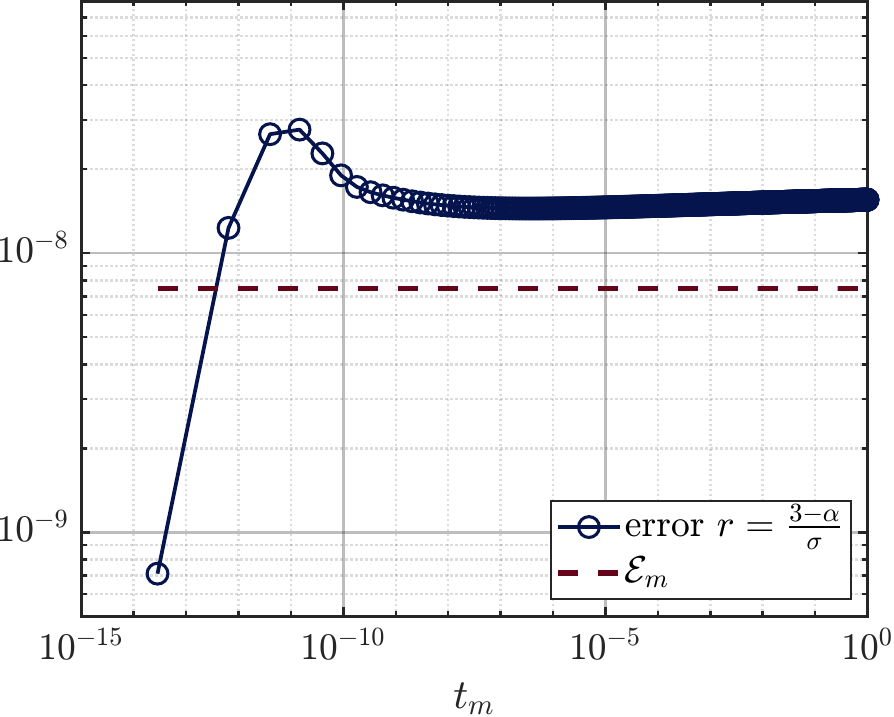}
	\end{minipage}%
	\hspace{0.8em}%
	\begin{minipage}{0.45\linewidth}
		\centering
		\includegraphics[width=\linewidth]{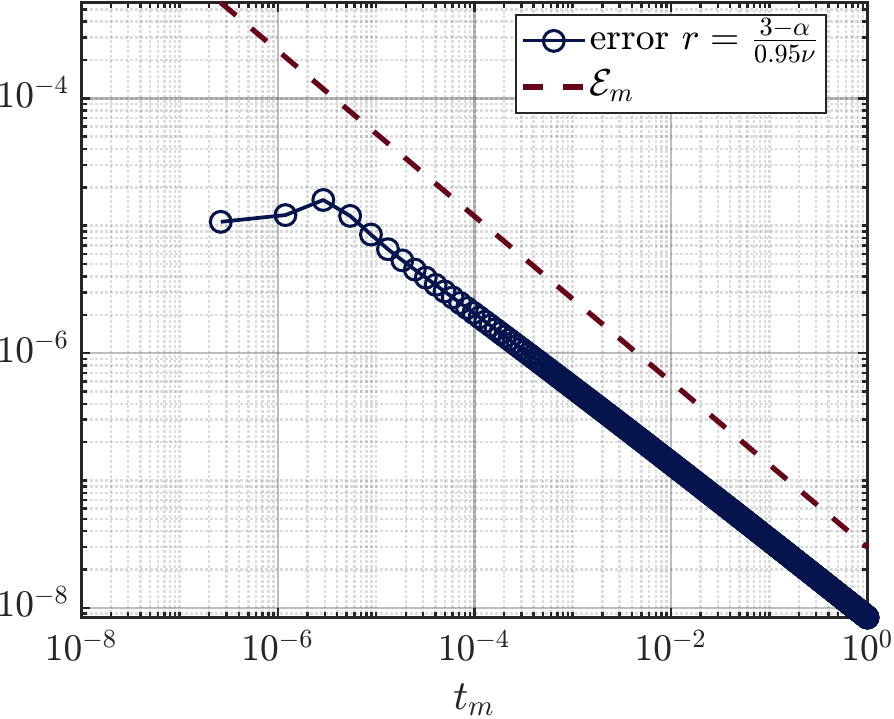}
	\end{minipage}
	\caption{Initial-value test problem in Example \ref{ex:simple} with $\sigma = 2\alpha$: Pointwise errors for $\alpha=0.3$ and $M=1024$, cases $r=\frac{3-\alpha}{\sigma}$ (left) and $r=\frac{3-\alpha}{0.95\nu}$ (right).}
	\label{fig-simple_sigmaE2alpha}
\end{figure}

\begin{figure}[!t]
	\centering
	\begin{minipage}{0.45\linewidth}
		\centering
		\includegraphics[width=\linewidth]{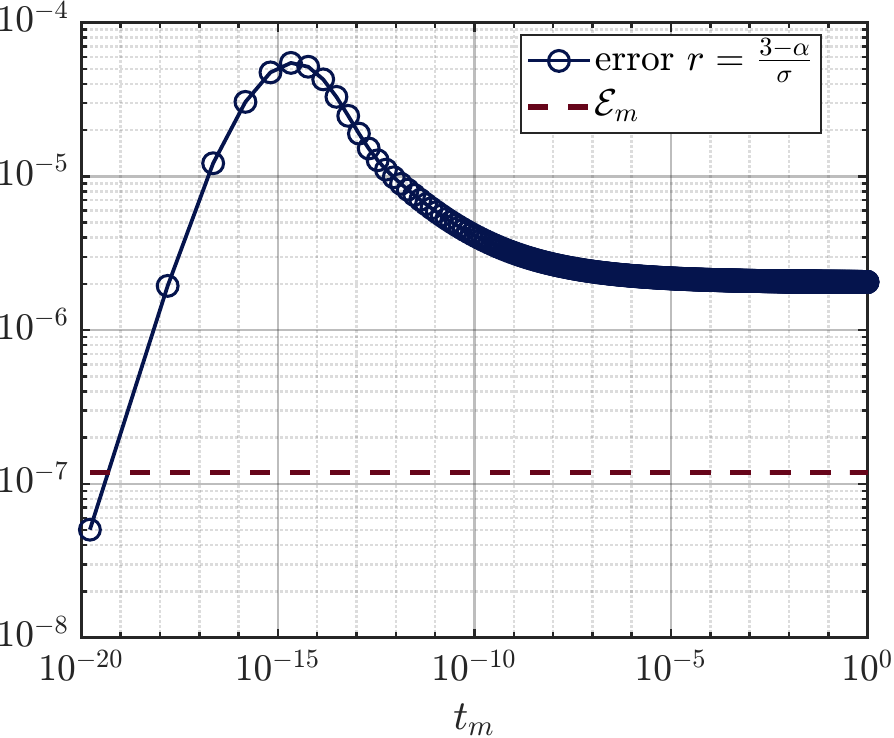}
	\end{minipage}%
	\hspace{0.8em}%
	\begin{minipage}{0.45\linewidth}
		\centering
		\includegraphics[width=\linewidth]{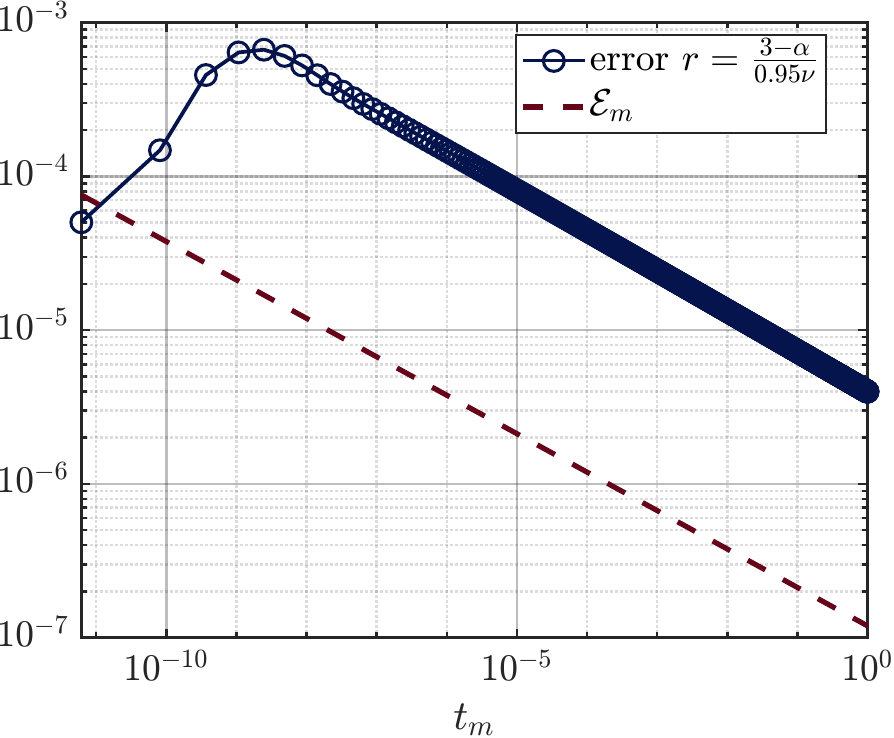}
	\end{minipage}	
	\caption{Initial-value test problem in Example \ref{ex:simple} with $\sigma = \alpha/2$: Pointwise errors for $\alpha=0.7$ and $M=1024$, cases $r=\frac{3-\alpha}{\sigma}$ (left) and $r=\frac{3-\alpha}{0.95\nu}$ (right).}
	\label{fig-simple_sigmaEalphaD2}
\end{figure}

\begin{table*}[h]
	\caption{Initial-value test problem in Example \ref{ex:simple} with $\sigma=\alpha$: Maximum nodal errors (odd rows) and computational rates $q$ in $M^{-q}$ (even rows) for $r=1,\ r=3-\alpha,\ r=\frac{3-\alpha}{\alpha}$.  \label{tb-simple-max}}
	\tabcolsep=0pt
	\begin{tabular*}{\textwidth}{@{\extracolsep{\fill}}llrrrrr@{\extracolsep{\fill}}}
		\toprule%
		& & $M=2^6$ & $M=2^7$ & $M=2^{8}$ & $M=2^{9}$ & $M=2^{10}$ \\
		\midrule
		\multirow{6}{*}{$r=1$}
		&\multirow{2}{*}{$\alpha=0.3$} & 5.2990e-02 & 4.3042e-02 & 3.4961e-02 & 2.8397e-02 & 2.3065e-02\\
		&  & 0.300      & 0.300      & 0.300      & 0.300      & \\
		&\multirow{2}{*}{$\alpha=0.5$} & 2.6825e-02 & 1.8968e-02 & 1.3413e-02 & 9.4842e-03 & 6.7063e-03\\
		&  & 0.500      & 0.500      & 0.500      & 0.500      & \\
		&\multirow{2}{*}{$\alpha=0.7$} & 1.0040e-02 & 6.1802e-03 & 3.8044e-03 & 2.3419e-03 & 1.4416e-03\\
		&  & 0.700      & 0.700      & 0.700      & 0.700      & \\
		\midrule
		\multirow{6}{*}{$r=3-\alpha$}
		&\multirow{2}{*}{$\alpha=0.3$} & 1.3007e-02 & 7.4191e-03 & 4.2317e-03 & 2.4137e-03 & 1.3767e-03\\
		&  & 0.810      & 0.810      & 0.810      & 0.810      & \\
		&\multirow{2}{*}{$\alpha=0.5$} & 2.6753e-03 & 1.1248e-03 & 4.7293e-04 & 1.9884e-04 & 8.3602e-05\\
		&  & 1.250      & 1.250      & 1.250      & 1.250      & \\
		&\multirow{2}{*}{$\alpha=0.7$} & 6.3869e-04 & 2.0923e-04 & 6.8545e-05 & 2.2455e-05 & 7.3563e-06\\
		&  & 1.610      & 1.610      & 1.610      & 1.610      & \\
		\midrule
		\multirow{6}{*}{$r=\frac{3-\alpha}{\alpha}$}
		&\multirow{2}{*}{$\alpha=0.3$} & 1.0090e-02 & 1.5527e-03 & 2.3895e-04 & 3.6773e-05 & 5.6591e-06\\
		&  & 2.700      & 2.700      & 2.700      & 2.700      & \\
		&\multirow{2}{*}{$\alpha=0.5$} & 5.6612e-04 & 1.0008e-04 & 1.7691e-05 & 3.1274e-06 & 5.5285e-07\\
		&  & 2.500      & 2.500      & 2.500      & 2.500      & \\
		&\multirow{2}{*}{$\alpha=0.7$} & 2.6829e-04 & 5.4480e-05 & 1.1063e-05 & 2.2464e-06 & 4.5617e-07\\
		&  & 2.300      & 2.300      & 2.300      & 2.300      & \\
		\botrule
	\end{tabular*}
\end{table*}

\begin{example}[Fractional-order parabolic test problem]\label{ex:parabolic}
Consider the following example with exact solution $u(x,t)=t^{\alpha}\sin(\pi x)$.
  \[
     \pt^{\alpha}_{t} u (x,t) - \partial_{x}^{2} u(x,t) =f(x,t),\ (x,t)\in(0,1)\times(0,1].
  \]
\end{example}

In Table \ref{tb-parabolic-FEM}, by applying the linear finite element method with $N=2048$, we present the maximum $L_{2}(\Omega)$ errors and the corresponding convergence rates for the fractional-order parabolic test problem in Example \ref{ex:parabolic}. The numerical results show that the convergence order in time is $3-\alpha$ when $r=\frac{3-\alpha}{\alpha}$, in agreement with the analytical results in  \eqref{co-convergence}.

\begin{table*}[h]
	\caption{Fractional-order parabolic test problem in Example \ref{ex:parabolic}: Maximum $L_{2}(\Omega)$ errors and computational rates $q$ in $M^{-q}$ (even rows) for $ r=\frac{3-\alpha}{\alpha}$ and spatial discrete by finite element method with $N=2048$. \label{tb-parabolic-FEM}}
	\tabcolsep=0pt
	\begin{tabular*}{\textwidth}{@{\extracolsep{\fill}}crrrrrr@{\extracolsep{\fill}}}
		\toprule%
		& $M=2^6$ & $M=2^7$ & $M=2^{8}$ & $M=2^{9}$ & $M=2^{10}$ \\
		\midrule
		\multirow{2}{*}{~~~$\alpha=0.3$} & 7.0850e-03 & 1.0968e-03 & 1.6894e-04 & 2.6002e-05 & 4.0016e-06\\
		& 2.692      & 2.699      & 2.700      & 2.700      & \\
		\multirow{2}{*}{~~~$\alpha=0.5$} & 3.9658e-04 & 7.0648e-05 & 1.2506e-05 & 2.2113e-06 & 3.9092e-07\\
		& 2.489      & 2.498      & 2.500      & 2.500      & \\
		\multirow{2}{*}{~~~$\alpha=0.7$} & 1.8223e-04 & 3.8208e-05 & 7.8095e-06 & 1.5879e-06 & 3.2254e-07 \\
		& 2.254      & 2.291      & 2.298      & 2.300      & \\
		\botrule
	\end{tabular*}
\end{table*}

	\begin{example}[Semilinear test problem]\label{ex:semi}
		As a test problem, consider $\mathcal{L}=-\partial_{x}^{2}$ and Allen-Cahn type nonlinearity $f(x,t)=u(x,t)-u^3(x,t)+g(x,t)$ posed in the domain $\Omega=(0,\pi)$ for $t\in[0,1]$. Here, we choose the source term $g(x,t)$ such that the exact solution is given by $u(x,t)=t^{\alpha} \sin(x^2/\pi)$.
	\end{example}
	
Finally, we numerically test our theoretical results for a semilinear subdiffusion equation. For the spatial discretization, we employ the standard linear finite element method with a sufficiently fine mesh $N=2048$. To efficiently resolve the nonlinear term $u-u^3$ at each time step, Newton's iteration method is implemented.

Table~\ref{tb-semi-FEM} presents the maximum $L_{2}(\Omega)$ errors and the corresponding temporal convergence rates for the semilinear test problem in Example~\ref{ex:semi}. The optimal grading parameter $r=\frac{3-\alpha}{\alpha}$ is used.
For $\alpha\in\{0.3,0.5,0.7\}$, the numerical results confirm that the proposed scheme achieves the expected optimal  convergence order \(3-\alpha\) in time for semilinear problems.

\begin{table*}[h]
	\caption{Semilinear test problem in Example \ref{ex:semi}: Maximum $L_{2}(\Omega)$ errors and computational rates $q$ in $M^{-q}$ (even rows) for $ r=\frac{3-\alpha}{\alpha}$ and spatial discrete by finite element method with $N=2048$. \label{tb-semi-FEM}}
	\tabcolsep=0pt
	\begin{tabular*}{\textwidth}{@{\extracolsep{\fill}}crrrrr@{\extracolsep{\fill}}}
		\toprule%
		& $M=2^5$ & $M=2^6$ & $M=2^{7}$ & $M=2^{8}$ & $M=2^{9}$ \\
		\midrule
		\multirow{2}{*}{~~~$\alpha=0.3$} & 7.1216e-02 & 1.0989e-02 & 1.6918e-03 & 2.6037e-04 & 4.0069e-05\\
		& 2.696      & 2.699      & 2.700      & 2.700      & \\
		\multirow{2}{*}{~~~$\alpha=0.5$} & 3.4769e-03 & 6.1647e-04 & 1.0904e-04 & 1.9277e-05 & 3.4077e-06 \\
		& 2.496      & 2.499      & 2.500      & 2.500      & \\
		\multirow{2}{*}{~~~$\alpha=0.7$} & 1.4210e-03 & 2.9152e-04 & 5.9330e-05 & 1.2053e-05 & 2.4478e-06 \\
		& 2.285      & 2.297      & 2.299      & 2.300      & \\
		\botrule
	\end{tabular*}
\end{table*}

\section{Conclusion}

For linear and semilinear subdiffusion equations, we have investigated
 a backward-quadratic-interpolation-based L2 scheme of order $3-\alpha$ in time.
Assuming that the exact solutions
 exhibit  an initial singularity of type $t^\sigma$,
      for any fixed $\sigma \in (0,1) \cup (1,2)$,
we have established sharp pointwise-in-time error bounds on graded temporal meshes with arbitrary degree of grading.


 \section*{Funding}
The first author
was partly funded by Research Ireland under grant number 23/RC/13506.
The second author acknowledges  support of the China Scholarship Council program (Project ID: 202406120313).

\bibliographystyle{plain}
\bibliography{ref}


\end{document}